\def\z{\mathfrak{z}}
\def\u{\mathfrak{u}}
\def\k{\mathfrak{k}}
\def\g{\mathfrak{g}}
\def\h{\mathfrak{h}}
\def\C{\mathbb{C}}
\def\R{\mathbb{R}}
\def\Q{\mathbb{Q}}
\def\Z{\mathbb{Z}}
\def\H{\mathbb H}
\def\ad{\operatorname{ad}}
\def\I{\operatorname{Id}}
\def\alt{\raise1pt\hbox{$\bigwedge$}}
\def\pint{\langle \cdotp,\cdotp \rangle }
\theoremstyle{plain}
\newtheorem{teo}{\bf Theorem}[section]
\newtheorem{cor}[teo]{\bf Corollary}
\newtheorem{prop}[teo]{\bf Proposition}
\newtheorem{lema}[teo]{\bf Lemma}
\theoremstyle{definition}
\newtheorem{ejemplo}[teo]{\bf Example}
\theoremstyle{remark}
\newtheorem{rem}[teo]{\bf Remark}
\newcommand\aff{\mathfrak{aff}}
\title{Locally conformally K\"ahler solvmanifolds: a survey}
\author{A. Andrada}
\address{FaMAF-CIEM, Universidad Nacional de C\'{o}rdoba, Ciudad Universitaria, 5000 C\'{o}rdoba, Argentina}
\email{andrada@famaf.unc.edu.ar}
\author{M. Origlia}
\address{FaMAF-CIEM, Universidad Nacional de C\'{o}rdoba, Ciudad Universitaria, 5000 C\'{o}rdoba, Argentina, and KU Leuven Kulak, BE-8500 Kortrijk, Belgium}
\email{origlia@famaf.unc.edu.ar}
\thanks{Both authors were partially supported by CONICET, ANPCyT and SECyT-UNC, from Argentina, and the second-named author was also supported by the Research Foundation Flanders (Project G.0F93.17N), from Belgium.}
\keywords{Locally conformally K\"ahler manifold, solvable Lie group, solvmanifold.}
\subjclass[2010]{53B35, 53A30, 22E25}
\date{}
\begin{document}

\begin{abstract}
A Hermitian structure on a manifold is called locally conformally K\"ahler (LCK) if it locally admits a conformal change which is K\"ahler. In this survey we review recent results of invariant LCK structures on solvmanifolds and present original results regarding the canonical bundle of solvmanifolds equipped with a Vaisman structure, that is, a LCK structure whose associated Lee form is parallel.
\end{abstract}

\maketitle

\section{Introduction}

Solvmanifolds and nilmanifolds, i.e., compact quotients of a simply connected solvable (respectively, nilpotent) Lie group by a discrete subgroup, have been used to provide many examples and counterexamples of manifolds with different kind of geometric structures and it is known that they possess a rich structure. For instance, it was shown in \cite{pal} that the total space of a principal torus bundle over a torus is a 2-step nilmanifold. In complex geometry, we can mention that it was proved in \cite{BG} that a non-toral nilmanifold cannot admit K\"ahler structures. This was later generalized to completely solvable solvmanifolds by Hasegawa in \cite{Has}, where he also proves that K\"ahler solvmanifolds are finite quotients of complex tori which have the structure of a complex torus bundle over a torus. Also, the first example of a compact symplectic manifold that does not admit any K\"ahler metric happens to be a nilmanifold, the well known Kodaira-Thurston manifold, which is a primary Kodaira surface (\cite{Kod,Th}). It is well known that the de Rham cohomology of nilmanifolds and completely solvable solvmanifolds can be computed in terms of invariant forms (according to Nomizu \cite{N} and Hattori \cite{Hat}, respectively), and it is conjectured that the Dolbeault cohomology of nilmanifolds can also be computed using invariant forms (this conjecture has been proved for several particular cases, see \cite{CFGU,CF,Rol,FRR}). In \cite{BDV} it was shown that any nilmanifold with an invariant complex structure has holomorphically trivial canonical bundle, while it is known that this does not happen generally for solvmanifolds (see \cite{FOU}).  Other results regarding complex geometric structures on nilmanifolds and solvmanifolds can be found in \cite{AV,CG,K1,MPPS,U,UV}, among many others.

It is then a natural question to study the existence of (invariant) \textit{locally conformally K\"ahler structures} on nilmanifolds and solvmanifolds. We recall that a locally conformally K\"ahler structure (LCK) on a manifold is a Hermitian structure such that each point has a neighborhood where the metric can be rescaled into a K\"ahler metric. The family of LCK manifolds is very important and very rich (for instance, diagonal Hopf manifolds,  Kodaira surfaces, Kato surfaces, some Oeljeklaus-Toma manifolds admit such structures), and it has been widely studied lately. In particular, there are several recent results about LCK structures on solvmanifolds, and it is our attempt to recollect many of them in this survey. 

\

The outline of this article is as follows. In section $2$ we recall the notion and properties of LCK metrics as well as some very well known results on solvmanifolds. In section $3$ we report general facts about invariant LCK structures on solvmanifolds, with emphasis on the case of nilmanifolds. We also recall all the compact complex surfaces that admit a LCK structure which can be seen as solvmanifolds. 
In section $4$ we recall results of LCK structures on solvmanifolds of special kinds: namely, LCK metrics on solvmanifolds with abelian complex structures, and LCK structures on almost abelian Lie groups.
Next, in section $5$ we review the construction of the Oeljeklaus-Toma manifolds and in particular we exhibit them as solvmanifolds, according to the work of Kasuya.  In section $6$ we review some recent results on Vaisman solvmanifolds and we also present original results about the triviality of the canonical bundle of such a manifold (see Theorem \ref{canonical}). Finally, in section $7$ we discuss briefly locally conformally symplectic solvmanifolds and their properties.

\medskip

\noindent \textbf{Acknowledgements.} 
The authors would like to thank Isabel Dotti for useful suggestions during the preparation of this manuscript.

\

\section{Preliminaries}

Let $(M,J,g)$ be a $2n$-dimensional Hermitian manifold, where $J$ is a complex structure and $g$ is a Hermitian metric, and let $\omega$ denote its fundamental $2$-form, that is, 
$\omega(X,Y)=g(JX,Y)$ for any $X,Y$ vector fields on $M$. The manifold $(M,J,g)$ is called {\it locally conformally K\"ahler} (LCK) if $g$ can be rescaled locally, in a 
neighborhood of any point in $M$, so as to be K\"ahler, i.e., there exists an open covering $\{ U_i\}_{i\in I}$ of $M$ and a family $\{ f_i\}_{i\in I}$ of $C^{\infty}$ functions, 
$f_i:U_i \to \R$, such that each local metric 
\begin{equation}\label{gi} 
g_i=\exp(-f_i)\,g|_{U_i} 
\end{equation} 
is K\"ahler. These manifolds are a natural generalization of the class of K\"ahler manifolds, and they have been much studied since the work of I. Vaisman in the '70s. The main references for locally conformally K\"ahler geometry are \cite{DO}, the more recent reports \cite{O, OV1} and the bibliography therein.

An equivalent characterization of a LCK manifold can be given in terms of the fundamental form $\omega$. Indeed, a Hermitian manifold $(M,J,g)$ is LCK if and only if there exists a closed $1$-form $\theta$ globally defined on $M$ such that 
\begin{equation}\label{lck}
d\omega=\theta\wedge\omega.
\end{equation} 
This closed $1$-form $\theta$ is called the \textit{Lee form} (see \cite{L}). Furthermore, the Lee form $\theta$ is uniquely determined by the following formula: 
\begin{equation}\label{tita} 
\theta=-\frac{1}{n-1}(\delta\omega)\circ J, 
\end{equation} 
where $\delta$ is the codifferential operator and $2n$ is the dimension of $M$. It follows from \cite{Ga} that $(1-n)\theta$ can be identified with the trace of the torsion of the Chern connection. A Hermitian manifold $(M,J,g)$ is called \textit{globally conformally K\"ahler} (GCK) if there exists a $C^{\infty}$ function, $f:M\to\R$, such that the metric $\exp(-f)g$ is K\"ahler, or equivalently, the Lee form is exact. Therefore a simply connected LCK manifold is GCK. 

There is yet another equivalent definition of LCK manifolds (see for instance \cite{DO}). Let $\tilde M$ be the universal covering of the complex manifold $(M,J)$. Then $M$ has a 
compatible LCK metric if and only if there is a representation $\chi:\pi_1(M)\to \R_{>0}$ and a K\"ahler metric $\tilde g$ on $\tilde M$ such that 
\[ 
 \gamma^*(\tilde{g})=\chi(\gamma)\tilde{g}, \qquad \text{for any } \gamma\in \pi_1(M),
\]
i.e., the fundamental group of $M$ (seen as the group of deck transformations of $\tilde M$) acts by homotheties. Note that the LCK metric metric obtained is GCK if and only if 
$\chi\equiv 1$. 

It is well known that LCK manifolds belong to the class $\mathcal{W}_4$ of the Gray-Hervella classification of almost Hermitian manifolds \cite{GH}. Also, an LCK manifold 
$(M,J,g)$ is K\"ahler if and only if $\theta=0$. Indeed, $\theta\wedge\omega=0$ and $\omega$ non degenerate imply $\theta=0$. It is known that if $(M,J,g)$ is a Hermitian manifold 
with $\dim M\ge 6$ such that \eqref{lck} holds for some $1$-form $\theta$, then $\theta$ is automatically closed, and therefore $M$ is LCK.

In contrast to the K\"ahler class, the LCK class is not stable under small deformations. Indeed, it follows from \cite{Bel} that some Inoue surfaces do not admit LCK structures but they are complex deformations of other Inoue surfaces which do admit LCK metrics \cite{Tr}. On the other hand, just as in the K\"ahler case, the LCK class is stable under blowing-up points (see \cite{Tr,Vu1}).

The Hopf manifolds are examples of LCK manifolds, and they are obtained as a quotient of $\C^n-\{0\}$ with the Boothby metric by a discrete subgroup of automorphisms. These manifolds are diffeomorphic to $S^1\times S^{2n-1}$ and for $n\geq 2$ they have first Betti number equal to 1, so that they do not admit any K\"ahler metric. The LCK structures on these Hopf manifolds have a special property, as shown by Vaisman in \cite{V2}. Indeed, the Lee form is parallel with respect to the Levi-Civita connection of the Hermitian metric. The LCK manifolds sharing this property form a distinguished class, which has been much studied since Vaisman's seminal work \cite{Bel,GMO,KS,OV3,OV4,V2,V3}. Indeed, LCK manifolds with parallel Lee form are nowadays called \textit{Vaisman manifolds}.

Vaisman manifolds satisfy stronger topological properties than general LCK manifolds. For instance, a compact Vaisman non-K\"ahler manifold $(M,J,g)$ has $b_1(M)$ odd (\cite{KS,V3}), which implies that such a manifold cannot admit K\"ahler metrics. Moreover, it was proved in \cite[Structure Theorem]{OV3} and \cite[Corollary 3.5]{OV4} that any compact Vaisman manifold admits a Riemannian submersion to a circle such that all fibers are isometric and admit a natural Sasakian structure. It was shown in \cite {Ve} 
that any compact complex submanifold of a Vaisman manifold is Vaisman, as well. In \cite{Bel} the classification of compact complex surfaces admitting a Vaisman structure is given. It is known that a homogeneous LCK manifold is Vaisman when the manifold is compact (\cite{GMO,HK2}) and, more generally, when the manifold is a quotient of a reductive Lie group such that the normalizer of the isotropy group is compact (\cite{ACHK}). In \cite{AHK} it was proved that a simply connected homogeneous Vaisman manifold $G/H$ with $G$ unimodular is isomorphic to a product $\R\times M_1$, where $M_1$ is a  simply  connected  homogeneous Sasakian manifold of a unimodular Lie group, which in turn is a certain $\R$-bundle or $S^1$-bundle over a simply connected homogeneous K\"ahler manifold of a reductive Lie group.

\medskip

Associated to any LCK metric on a Hermitian manifold there is a cohomology $H^*_\theta(M)$ which can be defined as follows. Since the corresponding Lee form $\theta$ is closed, we can deform the de Rham differential $d$ to obtain the adapted differential operator 
\[d_\theta \alpha= d\alpha -\theta\wedge\alpha.\]
This operator satisfies $d_\theta^2=0$, thus it defines the \textit{Morse-Novikov cohomology} $H_\theta^*(M)$ of $M$ relative to the closed $1$-form $\theta$, also called the Lichnerowicz 
cohomology or the adapted cohomology. It is known that if $M$ is a compact oriented $n$-dimensional manifold, then $H_\theta^0(M)= H_\theta^n(M)=0$ for any non exact closed $1$-form $\theta$ (see for 
instance \cite{GL,Ha}). For any LCK structure $(\omega,\theta)$ on $M$, the $2$-form $\omega$ defines a cohomology class $[\omega]_\theta\in H_\theta^2(M)$, since 
$d_\theta\omega=d\omega-\theta\wedge \omega=0$. It was proved in \cite{LLMP} that the Morse-Novikov cohomology associated to the Lee form of a Vaisman structure on a compact manifold vanishes. 

\

\subsection{Some facts on solvmanifolds}

Recall that a discrete subgroup $\Gamma$ of a simply connected Lie group $G$ is called a \textit{lattice} if the quotient $\Gamma\backslash G$ is compact. According to \cite{Mi}, if such a lattice exists then the Lie group must be unimodular. The quotient $\Gamma\backslash G$ is known as a solvmanifold if $G$ is solvable and as a nilmanifold if $G$ is nilpotent, and it is known that $\pi_1(\Gamma\backslash G)\cong \Gamma$. Moreover, the diffeomorphism class of solvmanifolds is determined by the isomorphism class of the corresponding lattices, as the following results show:

\begin{teo}\cite[Theorem 3.6]{R}
Let $G_1$ and $G_2$ be simply connected solvable Lie groups and $\Gamma_i$, $i=1,2$, a lattice in $G_i$. If $f:\Gamma_1\to\Gamma_2$ is an isomorphism, then there exists a 
diffeomorphism $F:G_1\to G_2$ such that
\begin{enumerate}
 \item $F|_{\Gamma_1}=f$,
 \item $F(\gamma g)=f(\gamma)F(g)$, for any $\gamma\in \Gamma_1$ and $g\in G_1$.
\end{enumerate}
\end{teo}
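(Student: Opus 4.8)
The plan is to construct the diffeomorphism $F$ directly from the algebraic data of $f$, exploiting the fibered structure that every such solvmanifold carries over a torus, rather than attempting to promote $f$ to a Lie group isomorphism (which in general fails --- this is exactly why the conclusion is only a diffeomorphism). As a preliminary observation, note that a simply connected solvable Lie group is diffeomorphic to a Euclidean space, so each $G_i$ is contractible; since $\Gamma_i$ acts freely and properly discontinuously, the quotient $\Gamma_i\backslash G_i$ is aspherical, a $K(\Gamma_i,1)$. Hence an isomorphism $f\colon\Gamma_1\to\Gamma_2$ already produces a homotopy equivalence of the two quotients. The content of the theorem is to upgrade this soft conclusion to an equivariant diffeomorphism of the universal covers.

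First I would set up the Mostow fibration. Let $N_i\subset G_i$ be the nilradical; since $\g_i$ is solvable one has $[\g_i,\g_i]\subseteq\n_i$, so $G_i/N_i$ is abelian, isomorphic to some $\R^{k_i}$. By Mostow's structure theory the intersection $\Gamma_i^N:=\Gamma_i\cap N_i$ is a lattice in $N_i$ and the image of $\Gamma_i$ in $G_i/N_i$ is a lattice $\cong\Z^{k_i}$, yielding a fibration of the nilmanifold $\Gamma_i^N\backslash N_i$ over the torus $T^{k_i}$. The crucial algebraic point is that $\Gamma_i^N$ is intrinsic to $\Gamma_i$: it is, up to finite index, the Fitting subgroup, the maximal normal nilpotent subgroup, which is characteristic. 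After a preliminary reduction to finite-index subgroups if necessary, this forces $f(\Gamma_1^N)=\Gamma_2^N$, so $f$ respects the fibration and descends to an isomorphism $\Z^{k_1}\to\Z^{k_2}$ of the base lattices; in particular $k_1=k_2=:k$.

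Next I would rigidify fiber and base separately. By Malcev rigidity the restriction $f\colon\Gamma_1^N\to\Gamma_2^N$ extends uniquely to a Lie group isomorphism $N_1\to N_2$, while the induced map on $\Z^k$ extends to a linear isomorphism $\R^k\to\R^k$. Choosing smooth sections of the bundles $N_i\to G_i\to\R^k$, which exist because the base is contractible, identifies each $G_i$ as a manifold with $N_i\times\R^k$, the group law being a semidirect product twisted by a homomorphism $\phi_i\colon\R^k\to\operatorname{Aut}(N_i)$. The diffeomorphism $F$ is then assembled from the fiber isomorphism and the base isomorphism, corrected so as to satisfy $F|_{\Gamma_1}=f$ and the equivariance $F(\gamma g)=f(\gamma)F(g)$.

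I expect the main obstacle to be precisely this final assembling step, and it is where the gap between diffeomorphism and isomorphism lives. Malcev rigidity forces the two monodromy actions $\phi_1$ and $\phi_2$ to agree, after transport by the fiber isomorphism, only on the integer points of the base --- that is, on the generators of $\Z^k$ --- and not on all of $\R^k$; the full one-parameter families $\phi_1,\phi_2$ need not be conjugate, which is exactly why $G_1$ and $G_2$ may fail to be isomorphic. To finish one must interpolate between $\phi_1$ and $\phi_2$ over the contractible base while maintaining equivariance under $\Gamma_1$ and $\Gamma_2$, using that the relevant automorphisms lie in the same component of $\operatorname{Aut}(N)$ and that $\R^k$ is contractible, so the obstruction to a smooth equivariant interpolation vanishes. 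Carrying this out, together with the finite-index bookkeeping needed to make $\Gamma_i^N$ genuinely characteristic, is the technical heart of the argument.
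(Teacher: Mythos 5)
The paper does not actually prove this statement --- it is quoted from Raghunathan's book \cite{R} as an external citation --- so your argument can only be measured against the standard proof, and as written it has a gap at its foundation. Your plan hinges on the claim that $\Gamma\cap N$ is intrinsic to the abstract group $\Gamma$ (``up to finite index, the Fitting subgroup''), so that $f$ carries $\Gamma_1\cap N_1$ onto $\Gamma_2\cap N_2$ and hence respects the two Mostow fibrations. This is false, and its failure is precisely the phenomenon the theorem exists to control. Take $G_1=\R^3$ with $\Gamma_1=\Z^3$, and $G_2=\widetilde{E}(2)=\R^2\rtimes\R$ (the universal cover of the Euclidean group, with $\theta\in\R$ acting by rotation) with $\Gamma_2=\Z^2\times 2\pi\Z\cong\Z^3$, the element $(0,2\pi)$ being central. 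Both groups are simply connected and solvable and the lattices are isomorphic, but $N_1=\R^3$ while $N_2=\R^2$, so $\Gamma_1\cap N_1=\Z^3$ has rank $3$ while $\Gamma_2\cap N_2=\Z^2$ has rank $2$. In particular $\Gamma\cap N$ is not even commensurable with the Fitting subgroup (which is all of $\Z^3$ on both sides), your base dimensions satisfy $k_1=0\neq 1=k_2$, and no passage to finite-index subgroups repairs this --- quite apart from the fact that proving the theorem for a finite-index subgroup of $\Gamma_1$ does not yield equivariance under all of $\Gamma_1$. So the first structural step, ``$f$ respects the fibration,'' already breaks down, and the identification of fibre and base data on which the rest of the construction rests goes with it.

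Two further steps are also unsupported even where the fibrations happen to match: the extension $1\to N\to G\to\R^k\to 1$ splits as manifolds but in general not as groups, so the semidirect-product normal form $N\rtimes_{\phi}\R^k$ is not available; and the final equivariant interpolation between $\phi_1$ and $\phi_2$, which you rightly call the technical heart, is asserted to follow from contractibility of $\R^k$, whereas equivariance under both lattices simultaneously is a rigidity constraint, not an obstruction-theoretic one. Raghunathan's actual proof of Theorem 3.6 sidesteps all of this: it works inside the product $G_1\times G_2$ with the graph of $f$ as a discrete subgroup, and applies the closure and density theorems for closed subgroups of simply connected solvable Lie groups (his Theorems 3.2--3.3, i.e.\ Mostow's results that $\Gamma N$ is closed and $\Gamma\cap N$ is a lattice in $N$) to that graph, rather than attempting to match the nilradicals of the two factors separately. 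If you want to rescue a fibration-style argument, you need the nilshadow or algebraic-hull machinery, which replaces $N$ by a canonically defined subgroup that is visible from $\Gamma$ alone; as it stands, the proposal does not prove the theorem.
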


\begin{cor}\cite{Mo} \label{mostow}
 Two solvmanifolds with isomorphic fundamental groups are diffeomorphic.
\end{cor}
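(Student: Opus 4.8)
The plan is to reduce everything to the preceding theorem, using the fact recalled above that $\pi_1(\Gamma \backslash G) \cong \Gamma$ for any lattice $\Gamma$ in a simply connected solvable Lie group $G$. Write the two given solvmanifolds as $M_i = \Gamma_i \backslash G_i$, with $G_i$ simply connected solvable and $\Gamma_i \le G_i$ a lattice, for $i = 1,2$. By hypothesis $\pi_1(M_1) \cong \pi_1(M_2)$, and the identification above converts this into a group isomorphism $f \colon \Gamma_1 \to \Gamma_2$. The preceding theorem then furnishes a diffeomorphism $F \colon G_1 \to G_2$ which is equivariant in the precise sense that $F(\gamma g) = f(\gamma) F(g)$ for all $\gamma \in \Gamma_1$ and $g \in G_1$.

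The key observation is that this equivariance is exactly the condition needed for $F$ to descend to the quotients. First I would check that $\bar F(\Gamma_1 g) := \Gamma_2 F(g)$ is well defined: if $\Gamma_1 g = \Gamma_1 g'$, then $g' = \gamma g$ for some $\gamma \in \Gamma_1$, whence $F(g') = f(\gamma) F(g)$ lies in the same left $\Gamma_2$-coset as $F(g)$. Since the quotient maps $\pi_i \colon G_i \to M_i$ are smooth covering maps (the actions of $\Gamma_i$ by left translations are free and properly discontinuous), the composite $\pi_2 \circ F$ is constant on the $\Gamma_1$-fibres, so by the universal property of the submersion $\pi_1$ the induced map $\bar F \colon M_1 \to M_2$ is smooth.

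It remains to produce a smooth inverse. Here I would observe that the equivariance passes to $F^{-1}$: substituting $h = F(g)$ into $F(\gamma g) = f(\gamma) F(g)$ gives $F^{-1}(\delta h) = f^{-1}(\delta) F^{-1}(h)$ for all $\delta \in \Gamma_2$ and $h \in G_2$, which is precisely the equivariance of the diffeomorphism $F^{-1}$ with respect to the isomorphism $f^{-1} \colon \Gamma_2 \to \Gamma_1$. Running the same descent argument backwards yields a smooth map $\overline{F^{-1}} \colon M_2 \to M_1$, and the identities $\overline{F^{-1}} \circ \bar F = \mathrm{id}_{M_1}$ and $\bar F \circ \overline{F^{-1}} = \mathrm{id}_{M_2}$ follow at once on representatives from $F^{-1} \circ F = \mathrm{id}_{G_1}$ and $F \circ F^{-1} = \mathrm{id}_{G_2}$. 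Hence $\bar F$ is a diffeomorphism between $M_1$ and $M_2$.

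The genuine content of the statement is carried entirely by the preceding theorem, whose construction of the equivariant global diffeomorphism $F$ is the nontrivial input; the descent from $F$ to $\bar F$ is formal. Accordingly I expect no real obstacle beyond bookkeeping, the only point deserving a moment's care being the verification that equivariance transfers to the inverse, so that $\bar F$ is genuinely invertible with smooth inverse rather than merely a smooth bijection.
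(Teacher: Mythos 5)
Your argument is correct and is exactly the route the paper intends: the corollary is presented as an immediate consequence of the preceding theorem of Raghunathan, obtained by identifying $\pi_1(\Gamma_i\backslash G_i)$ with $\Gamma_i$ and descending the equivariant diffeomorphism $F$ to the quotients. The descent and the equivariance of $F^{-1}$ are the only points to check, and you have handled both correctly.
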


\smallskip

It is not easy to determine whether a given unimodular solvable Lie group admits a lattice. However, there is such a criterion for nilpotent Lie groups. Indeed, it was proved by Malcev in \cite{Ma} that a nilpotent Lie group admits a lattice if and only if its Lie algebra has a rational form, i.e. there exists a basis of the Lie algebra such that the corresponding structure constants are all rational. More recently, in \cite{B}, the existence of lattices in simply connected solvable Lie groups up to dimension $6$ was studied. A general result proved by Witte in \cite[Proposition 8.7]{Wi} states that only countably many nonisomorphic simply connected Lie groups admit lattices. 

\medskip

In the case when $G$ is completely solvable, i.e., it is a solvable Lie group such that the endomorphisms $\ad_X$ of its Lie algebra $\g$ have only real eigenvalues for all $X\in \g$, the de Rham and the Morse-Novikov cohomology of $\Gamma\backslash G$ can be computed in terms of the cohomology of $\g$. Indeed, Hattori proved in \cite{Hat} that if $V$ is a finite dimensional triangular\footnote{A $\g$-module $V$ is called triangular if the endomorphisms of $V$ defined by $v\mapsto Xv$ have only real eigenvalues for any $X\in\g$.} $\g$-module, then $\overline{V}:=C^\infty(\Gamma\backslash G)\otimes V$ is a $\mathfrak{X}(\Gamma\backslash G)$-module and there is an isomorphism 
\begin{equation}\label{kill}
 H^*(\g,V) \cong H^*(\mathfrak{X}(\Gamma\backslash G), \overline{V}).
\end{equation}
Therefore:
\begin{itemize}
 \item If $V=\R$ is the trivial $\g$-module, then the right-hand side in \eqref{kill} gives the usual de Rham cohomology of $\Gamma\backslash G$, so that 
\begin{equation}\label{deRham}
H^*(\g) \cong H^*_{dR}(\Gamma\backslash G).
\end{equation}
 \item If $V=V_\theta$, where $\theta$ is a closed left-invariant 1-form and $V_{\theta}$ is a $1$-dimensional $\g$-module with action given by 
\[  Xv=-\theta(X)v, \quad X\in\g,\, v\in V_{\theta}, \]
then we can identify $\overline{V}$ with $C^\infty(\Gamma\backslash G)$ and the action of 
$\mathfrak{X}(\Gamma\backslash G)$ on $C^\infty(\Gamma\backslash G)$ is given by 
\[ X\cdot f= Xf-\theta(X)f, \qquad X\in\g, \, f\in C^\infty(\Gamma\backslash G).\]
Here we are using that there is a natural inclusion $\g \hookrightarrow 
\mathfrak{X}(\Gamma\backslash G)$ and a bijection $C^\infty(\Gamma\backslash G)\otimes \g \to 
\mathfrak{X}(\Gamma\backslash G)$ given by $f\otimes X \mapsto fX$. As a consequence, in this case 
\eqref{kill} becomes (cf. \cite[Corollary 4.1]{Mil})
\begin{equation}\label{kill_adapted}
 H^*_\theta(\g)\cong H^*_\theta(\Gamma\backslash G).
\end{equation}
\end{itemize}
In particular, $H^*_{dR}(\Gamma\backslash G)$ and $H^*_\theta(\Gamma\backslash G)$ do not depend on 
the lattice $\Gamma$.

\medskip
In the general case, i.e. when $G$ is not necessarily completely solvable, there is always an injection $i^*:H^*_\theta(\g)\to H^*_\theta(\Gamma\backslash G)$, where $\theta$ is 
any closed left-invariant 1-form, as shown in \cite{K}.

\

\section{LCK solvmanifolds}

Let $G$ be a Lie group with a left invariant Hermitian structure $(J,g)$. If $(J,g)$ satisfies 
the LCK condition \eqref{lck}, then $(J,g)$ is called a {\em left invariant LCK structure} on the 
Lie group $G$. Clearly, the fundamental $2$-form is left invariant and, using \eqref{tita}, it is 
easy to see that the corresponding Lee form $\theta$ on $G$ is also left invariant. 
 
This fact allows us to define LCK structures on Lie algebras. We recall that a {\em complex
structure J} on a Lie algebra $\g$ is an endomorphism $J: \g \to \g$ satisfying $J^2=-\I$ and 
\[ N_J=0, \quad  \text{where} \quad N_J(x,y)=[Jx,Jy]-[x,y]-J([Jx,y]+[x,Jy]),\]        
for any $x,y \in \g$. 

Let $\g$ be a Lie algebra, $J$ a complex structure and $\pint$ a Hermitian inner product on $\g$, 
with $\omega\in\alt^2\g^*$ the fundamental $2$-form. We say that $(\g,J,\pint)$ is {\em locally 
conformally K\"ahler} (LCK) if there exists $\theta \in \g^*$, with $d\theta=0$, such that
\begin{equation} \label{g-lck-0}
d\omega=\theta\wedge\omega.
\end{equation}
We will assume from now on that $\theta\neq 0$, so that we are excluding the K\"ahler case.

Recall that if $\alpha\in\g^*$ and $\eta\in\alt^2\g^*$, then their exterior derivatives $d\alpha\in\alt^2\g^*$
and $d\eta\in\alt^3\g^*$ are given by
\[ d\alpha(x,y)= -\alpha([x,y]), \qquad
d\eta(x,y,z)=-\eta([x,y],z)-\eta([y,z],x)-\eta([z,x],y),\]
for any $x,y,z\in\g$.

We have the following orthogonal decomposition for a Lie algebra $\g$ with an LCK structure $(J,\pint)$, 
\[\g=\mathbb{R}A \oplus \ker\theta\] 
where $\theta$ is the Lee form and $\theta(A)=1$. Since $d\theta=0$, we have that $\g'=[\g,\g]\subset\ker\theta$. Note also that since $\theta\neq0$, $\g$ cannot be a semisimple Lie algebra. It is clear that $JA\in\ker\theta$, but more can be said when $\g$ is unimodular. Indeed, we proved in \cite{AO} that $JA$ is in the commutator ideal $[\g,\g]$ of $\g$, using \eqref{tita}. 

\

We note that any left invariant LCK structure $(J,g)$ on a Lie group $G$ induces naturally a LCK structure on a quotient $\Gamma\backslash G$ of $G$ by a discrete subgroup $\Gamma$. The induced LCK structure on the quotient will be called \textit{invariant}.

\

\subsection{LCK nilmanifolds}

The first example of an LCK nilmanifold was given in \cite{CFL}. We recall their construction:

\begin{ejemplo}\label{heisenberg}
Let $\g=\R\times\h_{2n+1}$, where $\h_{2n+1}$ is the $(2n+1)$-dimensional
Heisenberg Lie algebra. There is a basis $\{X_1,\dots,X_n,Y_1,\dots,Y_n,Z_1,Z_2\}$ of $\g$ with Lie
brackets given by $[X_i,Y_i]=Z_1$ for $i=1,\dots,n$ and $Z_2$ in the center. We define an inner
product $\pint$ on $\g$ such that the basis above is orthonormal. Let $J_0$ be the almost complex structure on $\g$ given by:
\[J_0X_i=Y_i, \quad  J_0Z_1=-Z_2  \; \; \; \text{for $i=1,\dots,n$}.\]
It is easily seen that $J_0$ is a complex structure on $\g$ compatible with $\pint$. If $\{x^i,y^i,z^1,z^2\}$ denote the $1$-forms dual to $\{X_i,Y_i,Z_1,Z_2\}$ respectively, then the 
fundamental $2$-form is: 
\[\omega=\sum_{i=1}^n(x^i\wedge y^i) - z^1\wedge z^2.\]
Thus, \[ d\omega=z^2\wedge\omega,\]
and therefore $(\g,J_0,\pint)$ is LCK. 

It is known that $\g$ is the Lie algebra of the Lie group $\R\times H_{2n+1}$,
where $H_{2n+1}$ is the $(2n+1)$-dimensional Heisenberg group. The Lie group $H_{2n+1}$ admits a lattice $\Gamma$ and therefore the nilmanifold $N= S^1 \times
\Gamma\backslash H_{2n+1}$ admits an invariant LCK structure which is Vaisman. The nilmanifold $N$ is a primary Kodaira surface and it cannot admit any K\"ahler metric, due to \cite{BG}.
\end{ejemplo}

In \cite{U}, it was proved that if $(J,g)$ is a Hermitian structure on a $6$-dimensional nilmanifold $\Gamma\backslash G$ such that $J$ is invariant, then this structure is LCK if and only if $G$ is isomorphic to $H_5\times \R$, and the complex structure $J$ is equivalent to $J_0$. Moreover, the LCK structure is actually Vaisman. Based on this result, Ugarte states in the same article the following conjecture:

\

\textbf{Conjecture:} A $(2n+2)$-dimensional nilmanifold admitting an LCK structure (not necessarily invariant) is diffeomorphic to a product $\Gamma\backslash H_{2n+1}\times S^1$, 
where $\Gamma$ is a lattice in $H_{2n+1}$.

\

We mention next some recent advances towards the solution of this conjecture.

In \cite{S}, H. Sawai proves the conjecture in the case when the complex structure on the Hermitian nilmanifold is invariant, that is, he extends the result of Ugarte to any even dimension.

\begin{teo}[\cite{S}]
Let $(M,J)$ be a non-toral compact nilmanifold with a left invariant complex structure. If $(M,J)$ admits a locally conformally K\"ahler metric, then $(M,J)$ is biholomorphic to a quotient of $(H_{2n+1}\times \R ,J_0)$.
\end{teo}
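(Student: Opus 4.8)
The plan is to use an averaging argument to reduce the statement, whose hypothesis only requires $J$ (and not the metric) to be invariant, to the purely algebraic classification of nilpotent Lie algebras carrying an invariant LCK structure, and then to carry out that classification in arbitrary dimension. Write $G$ for the simply connected nilpotent Lie group with $M=\Gamma\backslash G$ and $\g$ for its Lie algebra, on which $J$ descends to an invariant complex structure.

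First I would arrange the Lee form to be invariant. The Lee form $\theta$ of the LCK metric $g$ is closed, so by Nomizu's isomorphism \eqref{deRham} its de Rham class has an invariant representative, i.e. $\theta=\theta_0+df$ with $\theta_0\in\g^*$ closed. The global conformal change $\tilde g=\exp(-f)\,g$ preserves $J$, the biholomorphism type and the LCK condition, and changes the Lee form to $\theta_0$; so from now on the Lee form is invariant. Then I would symmetrize: integrating the coefficients of the fundamental form $\tilde\omega$ in a left-invariant coframe against the normalized Haar measure of the (unimodular) nilmanifold produces an invariant $2$-form $\omega_{\mathrm{sym}}\in\alt^2\g^*$. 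Averaging preserves both $J$-invariance and the pointwise positivity $\tilde\omega(X,JX)>0$, so $\omega_{\mathrm{sym}}$ is the fundamental form of an invariant Hermitian metric; and since symmetrization is a chain map for $d$ (a standard consequence of unimodularity) and commutes with $\theta_0\wedge(\cdot)$ because $\theta_0$ is invariant, one gets $d\omega_{\mathrm{sym}}=(d\tilde\omega)_{\mathrm{sym}}=(\theta_0\wedge\tilde\omega)_{\mathrm{sym}}=\theta_0\wedge\omega_{\mathrm{sym}}$. Thus $(\g,J,\omega_{\mathrm{sym}})$ is an invariant LCK structure, and $\theta_0\neq0$ because a non-toral nilmanifold admits no K\"ahler metric by \cite{BG}.

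It then remains to classify invariant LCK structures on nilpotent Lie algebras. I would use the $J$-invariant orthogonal splitting $\g=\R A\oplus\R B\oplus\mathfrak{w}$, where $A$ is dual to $\theta_0$, $B=JA$ (which lies in $\g'=[\g,\g]$ since $\g$ is unimodular) and $\mathfrak{w}=\{A,B\}^\perp$, and write $\omega_{\mathrm{sym}}=\theta_0\wedge\eta+\omega_0$ with $\eta=B^\flat$ and $\omega_0$ the fundamental form of $J|_{\mathfrak{w}}$. Feeding this into $d\omega_{\mathrm{sym}}=\theta_0\wedge\omega_{\mathrm{sym}}$ splits it into structure equations relating $d\eta$ and $d\omega_0$, and the target to be reached is that $\mathfrak{w}$ is abelian, $\omega_0$ is closed, $B$ is central and $d\eta=-\omega_0$ up to scale --- exactly the Heisenberg relations of Example \ref{heisenberg} --- which identify $\g\cong\mathfrak{h}_{2n+1}\times\R$ with $J$ equivalent to $J_0$.

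The main obstacle is precisely this last extraction of the full $2$-step Heisenberg structure in arbitrary dimension, where the six-dimensional bookkeeping of \cite{U} no longer suffices. I expect nilpotency to enter essentially here: running the structure equations along the descending central series should rule out higher-step behaviour and force $[\mathfrak{w},\mathfrak{w}]\subseteq\R B$ together with the closedness of $\omega_0$. Once $(\g,J)$ is identified with $(\mathfrak{h}_{2n+1}\times\R,J_0)$, the isomorphism integrates to a biholomorphism $G\to H_{2n+1}\times\R$ carrying $\Gamma$ onto a lattice, and passing to quotients gives the asserted biholomorphism of $(M,J)$ with a quotient of $(H_{2n+1}\times\R,J_0)$.
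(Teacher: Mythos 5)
Your reduction to the invariant case is sound and is indeed how the first half of Sawai's argument goes: after replacing $g$ by $e^{-f}g$ so that the Lee form becomes its invariant representative $\theta_0$ (via Nomizu's isomorphism \eqref{deRham}), Belgun-type symmetrization --- a chain map that commutes with $\theta_0\wedge(\cdot)$ because $\theta_0$ is invariant and preserves $J$-compatibility and positivity --- produces an invariant LCK structure $(J,\omega_{\mathrm{sym}})$ on $\g$ with $\theta_0\neq0$ by \cite{BG}. Up to that point there is nothing to object to.

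The gap is exactly where you locate it, and it is not a detail: passing from ``$\g$ is a nilpotent Lie algebra with an invariant LCK structure'' to ``$\g\cong\h_{2n+1}\times\R$ with $J$ equivalent to $J_0$'' is the entire content of the theorem beyond the six-dimensional case of \cite{U}, and ``running the structure equations along the descending central series should rule out higher-step behaviour'' is a hope, not an argument. The mechanism that makes this work in Sawai's proof --- and the one ingredient your outline never invokes --- is the vanishing of the Morse--Novikov cohomology $H^k_{\theta_0}(\g)$ of a nilpotent Lie algebra for any nonzero closed $\theta_0$ and $k\geq 2$ (a consequence of Dixmier's theorem on the cohomology of nilpotent Lie algebras with coefficients in a nontrivial one-dimensional module). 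Since $d_{\theta_0}\omega_{\mathrm{sym}}=0$, this vanishing forces $\omega_{\mathrm{sym}}=d_{\theta_0}\eta=d\eta-\theta_0\wedge\eta$ for some invariant $1$-form $\eta$, and it is from this potential-type identity, combined with the nondegeneracy and positivity of $\omega_{\mathrm{sym}}$, that one extracts the centrality of $JA$, the relation $[\mathfrak{w},\mathfrak{w}]\subseteq\R JA$ and the Heisenberg structure equations you are aiming for. Without this (or an equivalent global input), the pointwise equation $d\omega=\theta_0\wedge\omega$ on a nilpotent Lie algebra does not obviously propagate down the central series, and your sketch offers no substitute. So the proposal is an accurate plan for the reduction step but leaves the decisive classification step unproved.
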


In his proof, Sawai uses the fact that the Morse-Novikov cohomology $H^k_{\theta}(\g)$ of a nilpotent Lie algebra $\g$ is trivial for any closed $1$-form $\theta$ and $k\geq 2$.

\

Another weaker version of the conjecture was proved by G. Bazzoni in \cite{Baz}. He considers nilmanifolds equipped with Vaisman structures and he proves:

\begin{teo}\label{nilpotent}
A $(2n+2)$-dimensional nilmanifold admitting a Vaisman structure (not necessarily invariant) is diffeomorphic to a product $\Gamma\backslash H_{2n+1}\times S^1$, where $\Gamma$ is a lattice in $H_{2n+1}$.
\end{teo}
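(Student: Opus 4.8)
The plan is to bypass the lack of invariance of the Vaisman structure by using, in succession, the structure theory of compact Vaisman manifolds, the rigidity of K\"ahler nilmanifolds, and finally Mostow's rigidity for solvmanifolds (Corollary~\ref{mostow}). Write $M=\Gamma\backslash G$ with $G$ simply connected nilpotent of dimension $2n+2$, and let $(J,g)$ be the Vaisman structure with parallel (hence harmonic) Lee form $\theta$. First I would feed $M$ into the Structure Theorem quoted above: a compact Vaisman manifold carries a Riemannian submersion $\pi\colon M\to S^1$ with mutually isometric fibers, each of which inherits a natural Sasakian structure. Topologically this submersion is detected by a nonzero class in $H^1(M;\Z)$, i.e.\ by a surjection $\Gamma\to\Z$; since $\Gamma$ is a lattice in the nilpotent group $G$, this homomorphism extends to $G\to\R$ and its kernel $\Gamma'$ is a lattice in a connected codimension-one normal subgroup $G'\subset G$. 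Hence the typical fiber $F=\Gamma'\backslash G'$ is a $(2n+1)$-dimensional nilmanifold that is simultaneously a compact Sasakian manifold.

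The heart of the argument is to identify $F$ as a Heisenberg nilmanifold. The Reeb field of the Sasakian structure on $F$ spans a one-dimensional foliation whose transverse geometry is K\"ahler of complex dimension $n$; passing to the (local) leaf space, or equivalently to the basic cohomology, produces a compact $2n$-dimensional K\"ahler nilmanifold. By the theorem of Benson--Gordon \cite{BG}, a non-toral nilmanifold admits no K\"ahler metric, so this transverse manifold must be a torus $\R^{2n}/\Lambda$. Consequently $G'$ is a central $\R$-extension of the abelian group $\R^{2n}$, and the extension cocycle is exactly the nondegenerate transverse K\"ahler form; this is the defining relation of the Heisenberg Lie algebra $\h_{2n+1}$. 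By Malcev's criterion \cite{Ma} the lattice $\Gamma'$ is then a Heisenberg lattice, so $F\cong\Lambda'\backslash H_{2n+1}$.

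Finally I would reassemble $M$ as the mapping torus of a Sasakian automorphism $\phi$ of $F$, which presents $\Gamma$ as an extension $1\to\Gamma'\to\Gamma\to\Z\to 1$. Because $\Gamma$ is nilpotent, the monodromy induced by $\phi$ on $\h_{2n+1}$ must be unipotent, and since $\phi$ preserves the Sasakian data (in particular the central Reeb direction) one shows that $\Gamma$ is isomorphic to a lattice in the product $H_{2n+1}\times\R$. Corollary~\ref{mostow} then upgrades this isomorphism of fundamental groups to a diffeomorphism, yielding $M\cong(\Lambda'\backslash H_{2n+1})\times S^1$. I expect the middle step to be the main obstacle: because the Vaisman, and hence the Sasakian, structure is \emph{not} assumed invariant, the Reeb foliation on $F$ need not be regular, so the ``transverse K\"ahler nilmanifold'' has to be produced with care (after passing to a finite cover, or purely at the level of basic and Nomizu-type cohomology) before Benson--Gordon can be invoked; controlling the potentially non-trivial part of the monodromy in the reassembly is the secondary difficulty.
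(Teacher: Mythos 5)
Your skeleton coincides with Bazzoni's argument in \cite{Baz}, as summarized in the paper: present the Vaisman nilmanifold, via the Ornea--Verbitsky structure theorem, as a mapping torus of a Sasakian manifold, identify that Sasakian piece with a Heisenberg nilmanifold, and reassemble using lattice theory and Corollary~\ref{mostow}. The genuine gap sits exactly in the step you call ``the heart of the argument,'' and it is not a technicality that a finite cover will repair. First, you assert that the fiber $F$ of the submersion to $S^1$ \emph{is} a nilmanifold $\Gamma'\backslash G'$; what you actually know is only that $\pi_1(F)\cong\Gamma'$ is a finitely generated torsion-free nilpotent group, hence abstractly a lattice in some simply connected nilpotent $G'$. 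Corollary~\ref{mostow} cannot upgrade this to a diffeomorphism, because it requires \emph{both} manifolds to already be solvmanifolds, and smooth rigidity for closed aspherical manifolds with nilpotent fundamental group is not available. Second, even granting that $F$ is a nilmanifold, the Reeb foliation of a Sasakian structure that is not invariant need not be quasi-regular, so there is no compact leaf space --- let alone a compact K\"ahler \emph{nilmanifold} --- to which Benson--Gordon \cite{BG} can be applied; basic cohomology does not produce such a manifold either, and Benson--Gordon is a statement about nilmanifolds, not about abstract cohomology algebras with a Lefschetz class. The identification of the central extension cocycle with the transverse K\"ahler form likewise presupposes an invariant (or at least regular) structure.

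The proof the paper points to is engineered precisely to avoid both problems. One never claims the Sasakian manifold $S$ is itself a nilmanifold; instead one shows that $S$ has the same minimal model as a compact nilmanifold $\Lambda\backslash H$ with $\Lambda=\pi_1(S)$, and then the minimal-model arguments for Sasakian nilpotent spaces from \cite{CDMY} force $H\cong H_{2n+1}$. This replaces your transverse-K\"ahler/leaf-space step by a purely algebraic argument on the Chevalley--Eilenberg complex, which is insensitive to the non-regularity of the Reeb foliation; only afterwards does one return to the lattice $\Gamma$ in the \emph{given} nilpotent group $G$ and invoke standard lattice theory together with Corollary~\ref{mostow}. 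To make your version work you would essentially have to reprove the Sasakian nilmanifold theorem of \cite{CDMY}, which is exactly the ingredient your outline is missing.
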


The idea of the proof goes as follows. If $\Gamma\backslash G$ is a compact Vaisman nilmanifold then, due to results of Ornea-Verbitsky, there exists a Sasakian manifold $S$ equipped with a Sasakian automorphism $\varphi: S\to S$ such that $\Gamma\backslash G$ is diffeomorphic to the mapping torus $S_{\varphi}$. Then it is proved that this Sasakian manifold has the same minimal model as a compact nilmanifold $\Lambda\backslash H$, where $\Lambda=\pi_1(S)$. Using some arguments concerning minimal models of nilpotent spaces appearing in \cite{CDMY}, the author proves that the nilpotent Lie group $H$ is isomorphic to $H_{2n+1}$. Standard results on Lie groups and their lattices imply the result.

\

\subsection{4-dimensional LCK solvmanifolds}

In \cite{ACFM}, it is given the first known example of a compact solvmanifold equipped with an LCK structure, which is not diffeomorphic to a nilmanifold. Their construction is given as follows:

\begin{ejemplo}\label{ejemplo-4d}
Let $\g$ be the $4$-dimensional Lie algebra with basis $\{A,X,Y,Z\}$ with Lie brackets
 \[ [A,X]=X,\quad [A,Z]=-Z, \quad [X,Z]=-Y. \]
Define an inner product on $\g$ in such a way that the basis above is orthonormal, and define an almost complex structure $J$ on $\g$ by
\[ JA=Z, \quad JX=Y.\]
It is easy to see that $J$ is integrable, thus it is a complex structure on $\g$. If $\{ \alpha,x,y,z\}$ denotes the dual basis of $\g^*$, then the fundamental $2$-form $\omega$ 
is given by $\omega=\alpha\wedge z+x\wedge y$, and hence $d\omega=-\alpha\wedge \omega$. Consequently, $\g$ admits an LCK structure with Lee form $\theta=-\alpha$. Note that it is 
not Vaisman since $(\nabla_X \theta)(X)=1$. 

It is well known that the corresponding simply connected Lie group $G$ has non-nilpotent lattices, and therefore if $\Gamma\backslash G$ is any associated solvmanifold, then it carries a non-Vaisman LCK structure. It was proved by Kamishima in \cite{Kam} that this invariant LCK structure on the compact complex surface $\Gamma\backslash G$ coincides with the LCK structure constructed by Tricerri on the Inoue surface of type $S^+$ in \cite{Tr}.
\end{ejemplo}

\

As mentioned before, the solvmanifold from Example \ref{ejemplo-4d} can be identified with an Inoue surface of type $S^+$. More generally, it was proved in \cite[Theorem 1]{Ha1} that a compact complex surface $X$ diffeomorphic to a solvmanifold $\Gamma\backslash G$ is either (1) a complex torus, (2) a primary Kodaira surface, (3) a secondary Kodaira surface, (4) an Inoue surface of type $S^0$, (5) an Inoue surface of type $S^+$, or (6) a hyperelliptic surface; moreover, the complex structure on $X$ can be seen to be induced by a left invariant one on $G$. In each case we recall the structure equations for the Lie algebra of $G$, which has a basis $\{A,X,Y,Z\}$:
\begin{enumerate}
 \item complex torus: all brackets vanish,
 \item primary Kodaira surface: $[X,Y]=Z$,
 \item secondary Kodaira surface: $[X,Y]=Z,\, [A,X]=Y,\, [A,Y]=-X$,
 \item Inoue surface of type $S^0$: $[A,X]=-\frac12 X+bY,\, [A,Y]=-bX-\frac12 Y,\, [A,Z]=Z$, $b\in\R$,
 \item Inoue surface of type $S^+$: $[X,Z]=-Y,\, [A,X]=X,\, [A,Z]=-Z$,
  \item hyperelliptic surface: $[A,X]=-Y,\, [A,Y]=X$. 
\end{enumerate}
According to \cite{HK}, all of the $4$-dimensional compact complex surfaces above admit a LCK structure, except for the complex torus and the hyperelliptic surface. More precisely, consider for the cases (2)-(5) the complex structure $J$ given by $JX=Y,\, JA=Z$ and the metric such that the basis $\{A,X,Y,Z\}$ is orthonormal. If $\{\alpha,x,y,z\}$ denotes the dual basis of this basis then the fundamental $2$-form $\omega$ is given by $\omega=\alpha\wedge z+x\wedge y$, and the Lee form is $\theta=\alpha$ for cases (2)-(4) and $\theta=-\alpha$ for the case (5).

\medskip

More recently, in \cite{AngO} a complete classification of locally conformally K\"ahler structures on $4$-dimensional solvable Lie algebras (not only unimodular) up to linear equivalence was given.

\begin{rem}
For the sake of completeness, we recall that LCK structures on $4$-dimensional reductive Lie algebras were studied in \cite{ACHK}. The authors prove that if a reductive Lie algebra admits a LCK structure then the Lie algebra is isomorphic to either $\mathfrak{gl}(2,\R)$ or $\mathfrak{u}(2)$. In particular, such examples occur only in dimension four. A compact quotient of $\operatorname{GL}(2,\R)$ by a discrete subgroup corresponds to a properly elliptic surface, while $\operatorname{U}(2)$ with an invariant complex structure corresponds to a Hopf surface.
\end{rem}

\

\section{Special cases}

In this section we review results concerning special cases of LCK structures on solvable Lie algebras. First we consider LCK structures where the complex structure is abelian, and later we analyze the existence of LCK structures on almost abelian Lie algebras.

\medskip

\subsection{LCK structures with abelian complex structure}

An almost complex structure $J$ on $\g$ is called \textit {abelian} if 
\[ [JX,JY]=[X,Y], \, \, \text{for all} \, \, 
X,Y\in \g.\]
Note that an abelian almost complex structure is automatically integrable, and therefore it will be called an abelian complex structure.

These complex structures have had interesting applications in differential geometry. For instance, a pair of anticommuting abelian complex structures on $\g$ gives rise to an invariant weak HKT structure on any Lie group $G$ associated to $\g$ (see \cite{DF} and \cite{GP}). In \cite{CF} it has been shown that the Dolbeault cohomology of a nilmanifold with an abelian complex structure can be computed algebraically. Also, deformations of abelian complex structures on nilmanifolds have been studied in \cite{CFP, MPPS}. More recently, it was shown in \cite{UV} that the holonomy group of the Bismut connection of an invariant Hermitian structure on a $6$-dimensional nilmanifold reduces to a proper subgroup of $SU(3)$ if and only if the complex structure is abelian. This result was later extended in \cite{AV} to solvmanifolds of any dimension equipped with abelian balanced Hermitian structures.

\medskip

Some properties of this kind of complex structures are stated in the following result (see \cite{ABD,BD} for their proofs).

\begin{lema}\label{prop}
Let $\g$ be a Lie algebra with $\z(\g)$ its center and $\g':=[\g,\g]$ its commutator ideal. If $J$ is an abelian complex structure on $\g$, then
\begin{enumerate}
\item $J\z(\g)=\z(\g)$.
\item $\g' \cap J\g'\subset \z(\g'+J\g')$.
\item The codimension of $\g'$ is at least $2$, unless $\g$ is isomorphic to $\aff(\R)$ (the only $2$-di\-men\-sional non-abelian Lie algebra).
\item $\g'$ is abelian, therefore $\g$ is $2$-step solvable.
\end{enumerate}
\end{lema}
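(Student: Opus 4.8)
The plan is to extract a single bracket identity from the abelian condition and then deduce the four items in the order (1), (4), (2), (3), because (2) and (3) will turn out to be formal consequences of the harder statement (4). First I would record the basic identity: replacing $Y$ by $JY$ in the defining relation $[JX,JY]=[X,Y]$ and using $J^2=-\I$ gives
\[ [JX,Y]=-[X,JY], \qquad X,Y\in\g. \]
Item (1) is then immediate: if $X\in\z(\g)$ then for every $Y$ we have $[JX,Y]=-[X,JY]=0$, so $JX\in\z(\g)$; thus $J\z(\g)\subseteq\z(\g)$, and applying $J$ once more together with $J^2=-\I$ yields the reverse inclusion, whence $J\z(\g)=\z(\g)$.

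The main work is item (4), which I would attack by complexifying: write $\g^{\C}=\g^{1,0}\oplus\g^{0,1}$ for the $\pm i$-eigenspaces of the $\C$-linear extension of $J$. A direct computation with $Z_1=X-iJX$, $Z_2=Y-iJY\in\g^{1,0}$ shows
\[ [Z_1,Z_2]=\big([X,Y]-[JX,JY]\big)-i\big([X,JY]+[JX,Y]\big)=0, \]
the two brackets vanishing by abelianness and by the identity above respectively; hence $\g^{1,0}$ is an abelian subalgebra, and so is its conjugate $\g^{0,1}$. Thus $\g^{\C}$ is the vector-space sum of two abelian subalgebras, and I would prove that any such Lie algebra is metabelian (the Lie-algebra analogue of It\^o's theorem on products of abelian subgroups). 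Concretely, with respect to $\g^{\C}=\g^{1,0}\oplus\g^{0,1}$ every $\ad_z$ with $z\in\g^{1,0}$ annihilates $\g^{1,0}$ and these operators mutually commute, and symmetrically for $\g^{0,1}$; writing the $\ad$-operators in the resulting block form and imposing $[\ad_u,\ad_v]=\ad_{[u,v]}$ across the two summands produces relations that force $[(\g^{\C})',(\g^{\C})']=0$. Since $(\g^{\C})'=\g'\otimes_\R\C$, this gives $\g'$ abelian, i.e. $\g$ is $2$-step solvable. This last block computation is where I expect the real difficulty to lie; everything else is formal.

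Granting (4), item (2) follows cleanly from the abelian identity. Let $W\in\g'\cap J\g'$, so that $W\in\g'$ and $JW\in\g'$. Since $[W,\cdot]$ is linear, it suffices to show $[W,U]=0$ for $U\in\g'$ and for $U\in J\g'$. If $U\in\g'$ then $[W,U]=0$ by (4). If $U\in J\g'$, then $JU\in\g'$, and the abelian condition gives $[W,U]=[JW,JU]=0$, since $JW,JU\in\g'$ and $\g'$ is abelian by (4). Hence $\g'\cap J\g'\subseteq\z(\g'+J\g')$.

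Finally I would prove (3) by a parity argument. By (4) $\g$ is solvable and nonzero, so $\g'$ has codimension at least $1$; suppose it equals $1$. As $\g$ carries a complex structure, $\dim_\R\g$ is even, hence $\dim\g'$ is odd and $\g'$ cannot be $J$-invariant; therefore $J\g'\neq\g'$ and, since $\g'+J\g'$ is $J$-invariant and strictly contains the hyperplane $\g'$, we get $\g'+J\g'=\g$. By (2) then $\g'\cap J\g'\subseteq\z(\g)$, and a dimension count gives $\dim(\g'\cap J\g')=\dim\g'-1$. Writing $\g=\R A\oplus\g'$, the only nonzero brackets come from $\ad_A$ (as $\g'$ is abelian), and $\ad_A$ annihilates the central subspace $\g'\cap J\g'$; hence $\g'=[\g,\g]=\ad_A(\g')$ has dimension at most $1$. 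Thus $\dim\g'=1$ and $\dim\g=2$, and since $\g'\neq0$ this forces $\g\cong\aff(\R)$, which is exactly the asserted exceptional case.
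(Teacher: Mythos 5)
Your handling of item (1), and your reductions of items (2) and (3) to item (4), are correct: the identity $[JX,Y]=-[X,JY]$ gives (1) immediately; granting that $\g'$ is abelian, the computation $[W,U]=[JW,JU]$ for $W,U\in J\g'$ does yield (2); and the dimension count in (3) (parity of $\dim\g$ forces $J\g'\neq\g'$, hence $\g'+J\g'=\g$, hence $\g'\cap J\g'$ is central of codimension one in $\g'$, hence $\g'=\ad_A(\g')$ has dimension at most one) is a clean and valid argument. Note that the survey itself gives no proof of this lemma but defers to \cite{ABD,BD}, so the comparison here is with those references rather than with an in-paper argument.

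The problem is item (4), which is the real content of the lemma and on which your proofs of (2) and (3) depend. You correctly show that $\g^{1,0}$ and $\g^{0,1}$ are abelian subalgebras with $\g^{\C}=\g^{1,0}\oplus\g^{0,1}$, but you then rest the entire proof on the assertion that a Lie algebra which is the vector-space sum of two abelian subalgebras is metabelian, and you do not prove it. The block-form sketch does not deliver it: for $z\in\g^{1,0}$ the operator $\ad_z$ has first column zero and the commutation relations $[\ad_u,\ad_v]=\ad_{[u,v]}$ evaluated on basis triples reproduce exactly the Jacobi identities of $\g^{\C}$ (namely $[x,[x',y]]=[x',[x,y]]$ for $x,x'\in\g^{1,0}$, $y\in\g^{0,1}$, and its mirror image) and nothing more; the obvious expansions of $[[x,y],[x',y']]$ using these relations are circular and do not formally produce $[(\g^{\C})',(\g^{\C})']=0$. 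You yourself flag this step as ``where the real difficulty lies,'' and that is accurate: the Itô-type statement for Lie algebras is not a citable triviality, and in the literature the $2$-step solvability of Lie algebras with abelian complex structures is established by a careful direct argument rather than by appeal to a general sum-of-two-abelian-subalgebras theorem (this precise point has in fact required correction in closely related classification papers). To close the gap you should either give an actual proof of the general statement you invoke, with a precise reference if one exists, or exploit the extra structure you have and the references use --- namely that $\g^{0,1}=\overline{\g^{1,0}}$ is the complex conjugate of $\g^{1,0}$ --- and argue directly from $[JX,JY]=[X,Y]$ and $[JX,Y]=-[X,JY]$ that $[[X,Y],[Z,W]]=0$. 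As it stands, only item (1) is fully proved.
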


\

We recall next the main result from \cite{AO}, concerning Lie algebras equipped with a LCK structure with abelian complex structure. Before stating the main result, we consider 
the following variation of Example \ref{heisenberg}.
Recall that $\R\times\mathfrak{h}_{2n+1}$ has a basis $\{X_1,\dots,X_n,Y_1,\dots,Y_n,Z_1,Z_2\}$
such that $[X_i,Y_i]=Z_1$ for $i=1,\dots,n$, and that this Lie algebra admits an abelian complex
structure $J_0$ given by $J_0X_i=Y_i,\, J_0Z_1=-Z_2$. For any $\lambda>0$, consider the metric
$\pint_\lambda$ such that the basis above is orthogonal, with $|X_i|=|Y_i|=1$ but
$|Z_1|^2=|Z_2|^2=\frac{1}{\lambda}$. It is easy
to see (just as in Example \ref{heisenberg}) that $(J_0,\pint_\lambda)$ is an LCK structure, in fact,
it is Vaisman. Furthermore, the metrics $\pint_\lambda$ are pairwise non-isometric, since the scalar
curvature of $\pint_\lambda$ is $-\frac{n\lambda^2}{2}$. 

\

\begin{teo}[\cite{AO}]
Let $(J,\pint)$ be an LCK structure on $\g$ with abelian complex structure $J$. If $\g$ is unimodular then $\g \simeq \R\times\h_{2n+1}$, where $\mathfrak{h}_{2n+1}$ is the $(2n+1)$-dimensional Heisenberg Lie algebra, and $(J,\pint)$ is equivalent to $(J_0,\pint_\lambda)$ for some $\lambda>0$.
\end{teo}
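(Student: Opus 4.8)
The plan is to translate the whole problem into linear algebra on the two operators $\ad_A$ and $\ad_B$, where $A=\theta^\sharp$ is the Lee field and $B=JA$ is the anti-Lee field. First I would normalize $A=\theta^\sharp$, so that $\theta=A^\flat$ and we get the orthogonal splitting $\g=\R A\oplus\k$ with $\k=\ker\theta$; since $\g'\subset\ker\theta$, $\k$ is a codimension-one ideal. Unimodularity and \eqref{tita} give (as quoted before the statement) $B=JA\in\g'$, and Lemma \ref{prop}(4) gives that $\g'$ is abelian, so in particular $[B,\g']=0$. Writing $\u=(\R A\oplus\R B)^\perp$, which is $J$-invariant with $\omega_0:=\omega|_{\u}$ nondegenerate, one checks $\omega=\tfrac{1}{|A|^2}A^\flat\wedge B^\flat+\omega_0$ and $\theta\wedge\omega=A^\flat\wedge\omega_0$.

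I would then extract two identities. From the abelian condition $[JX,JY]=[X,Y]$ with $X=A$ one obtains the operator identity $\ad_B=-\ad_A\circ J$; since $B\in\g'$ and $\g'$ is abelian this forces $\ad_A|_{J\g'}=0$, while always $\operatorname{Im}\ad_A,\operatorname{Im}\ad_B\subset\g'$. From the LCK condition $d\omega=\theta\wedge\omega$, evaluating on triples inside $\k$ shows that $\omega_0$ is a closed $2$-form on the Lie algebra $\k$, and evaluating on triples containing $A$ yields the key identity
\begin{equation}\label{eq:key}
\omega(\ad_A Y,Z)+\omega(Y,\ad_A Z)+|A|^2\,\omega(Y,Z)+dB^\flat(Y,Z)=0,\qquad Y,Z\in\k,
\end{equation}
where $B^\flat=\la B,\cdot\ra$, so that $dB^\flat(Y,Z)=-\la B,[Y,Z]\ra$.

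The heart of the argument is to run \eqref{eq:key} on $\g'$, where $dB^\flat$ vanishes: it says that $\ad_A|_{\g'}+\tfrac{|A|^2}{2}\I$ is skew with respect to $\omega|_{\g'}$. If $\omega|_{\g'}$ were nondegenerate this operator would lie in $\mathfrak{sp}(\g')$ and hence be trace-free, forcing $\tr(\ad_A|_{\g'})=-\tfrac{|A|^2}{2}\dim\g'$; but $\operatorname{Im}\ad_A\subset\g'$ gives $\tr\ad_A=\tr(\ad_A|_{\g'})$, which is $0$ by unimodularity, so since $|A|^2>0$ this is impossible unless $\g'=0$. Hence $\omega|_{\g'}$ is degenerate, and the plan is to upgrade this to $\ad_A=0$, i.e. that $A$ (equivalently $B$) is central and the structure is Vaisman. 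Concretely I would first establish the Vaisman property (that $\ad_A$ is skew-symmetric, which for a left-invariant metric is equivalent to $\nabla\theta=0$); combined with $\ad_A|_{J\g'}=0$ this turns \eqref{eq:key} on $\g'$ into $|A|^2\,\omega|_{\g'}=0$, so $\g'$ is totally $\omega$-isotropic, whence $\g'\cap J\g'=0$. Then I would use Lemma \ref{prop}(2),(3) together with the mixed cases of \eqref{eq:key} and the unimodularity of the $\ad_u$ to force $\dim\g'=1$, i.e. $\g'=\R B$ and $\ad_A=\ad_B=0$. I expect \emph{this} step — passing from ``$\omega|_{\g'}$ degenerate'' to ``$\g'=\R B$ with $A$ central'' — to be the main obstacle, since it is exactly where the abelian identity, the LCK identity \eqref{eq:key}, and unimodularity must be balanced against one another, and where proving the Vaisman property seems unavoidable.

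Once $A$ is central and $\g'=\R B$, the remainder is routine. With $\ad_A=0$, identity \eqref{eq:key} reads $|A|^2\,\omega_0(Y,Z)=\la B,[Y,Z]\ra$ on $\k$, and since $|A|=|B|$ and $\g'=\R B$ this gives $[u,u']=\omega_0(u,u')\,B$ for $u,u'\in\u$, with $B$ central; thus $\k$ is the central extension $0\to\R B\to\k\to\u\to 0$ with nondegenerate cocycle $\omega_0$, i.e. $\k\cong\h_{2n+1}$, and $\g\cong\R\times\h_{2n+1}$. Finally, choosing a $J$-adapted $\omega_0$-orthonormal basis $\{X_i,JX_i\}$ of $\u$ and sending $A\mapsto Z_2$, $B\mapsto Z_1$, $X_i\mapsto X_i$, $JX_i\mapsto Y_i$ carries $J$ to $J_0$; the only surviving modulus is $|A|^2=|B|^2$, which pins down $\pint$ to $\pint_\lambda$ for the corresponding $\lambda>0$ (consistently with the scalar curvature $-\tfrac{n\lambda^2}{2}$ recorded before the statement), giving the asserted equivalence $(J,\pint)\simeq(J_0,\pint_\lambda)$.
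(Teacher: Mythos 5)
This survey does not actually prove the theorem; it only states it and cites \cite{AO}, so there is no in-paper argument to compare against. Judged on its own terms, your proposal has a correct and useful outer shell but a genuine gap at its core. The setup is sound: the splitting $\g=\R A\oplus\R JA\oplus\u$, the identity $\ad_{JA}=-\ad_A\circ J$ from the abelian condition (hence $\ad_A|_{J\g'}=0$ since $JA\in\g'$ and $\g'$ is abelian), your identity \eqref{eq:key}, and the trace argument showing $\omega|_{\g'}$ must be degenerate are all correct, as is the endgame once one knows $A$ is central and $\g'=\R JA$ (the central extension of $\u$ by the cocycle $\omega_0$ is visibly $\h_{2n+1}$, and the only modulus is $|A|^2=|JA|^2$, matching $\pint_\lambda$).

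The problem is that everything between these two endpoints --- precisely the content of the theorem --- is not proved. You explicitly defer the passage from ``$\omega|_{\g'}$ is degenerate'' to ``$\ad_A$ is skew-symmetric, $\g'$ is $\omega$-isotropic, and $\dim\g'=1$'', calling it the main obstacle and indicating only that Lemma \ref{prop}(2),(3), the mixed cases of \eqref{eq:key}, and unimodularity ``must be balanced against one another.'' That is a statement of strategy, not an argument: the skew-symmetry of $\ad_A$ is not a formal consequence of the identities you have written down (note that your own derivation of $\omega|_{\g'}=0$ from \eqref{eq:key} already presupposes both skew-symmetry and $[\ad_A,J]=0$, i.e.\ the full force of Proposition \ref{vaisman-properties}, which is proved in \cite{AO,AO2} only \emph{after} the Vaisman condition is available). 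Likewise, degeneracy of $\omega|_{\g'}$ alone is far from forcing $\dim\g'=1$; one must genuinely exploit $\g'\cap J\g'\subset\z(\g'+J\g')$ and evaluate \eqref{eq:key} on pairs mixing $\g'$, $J\g'$ and their orthogonal complement, which is where the real work of \cite{AO} lies. As it stands, the proposal identifies the right ingredients but does not close the argument, so it cannot be accepted as a proof.
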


\

In consequence, up to isometry and rescaling, the only examples of LCK structures on Lie algebras with abelian complex structures are those from Example \ref{heisenberg}, and they are Vaisman.

\medskip

\subsection{LCK structures on almost abelian solvmanifolds}

We recall that a Lie group $G$ is said to be \textit{almost abelian} if its Lie algebra $\g$ has a codimension one abelian ideal. Such a Lie algebra will be called almost abelian, 
and it can be written as $\g= \R f_1 \ltimes_{\ad_{f_1}} \mathfrak{u}$, where $\mathfrak u$ is an abelian ideal of $\g$, and $\R$ is generated by $f_1$. Accordingly, the Lie group 
$G$ is a semidirect product $G=\R\ltimes_\varphi \R^d$ for some $d\in\mathbb N$, where the action is given by $\varphi(t)=e^{t\ad_{f_1}}$.

\

In \cite{AO1} we considered the existence of invariant LCK structures on solvmanifolds associated to almost abelian Lie groups. Firstly, we showed that there are plenty of almost 
abelian Lie algebras which admit LCK structures. Indeed, they are completely characterized in the next result:

\begin{teo}[\cite{AO1}]\label{lcK}
Let $\g$ be a $(2n+2)$-dimensional almost abelian Lie algebra and $(J,\pint)$ a Hermitian
structure on $\g$, and let $\g'$ denote the commutator ideal $[\g,\g]$ of $\g$.
\begin{enumerate}
 \item If $\dim \g'=1$, then $(J,\pint)$ is LCK if and only if $\g$ is isomorphic to 
$\mathfrak{h}_3 \times\R$ or $\mathfrak{aff}(\R)\times \R^2$ as above.
 \item If $\dim\g'\geq 2$, then $(J,\pint)$ is LCK if and only if $\g$ can 
be decomposed as $\g={\mathfrak a}^\perp\ltimes \mathfrak a$, a $J$-invariant orthogonal sum with a
codimension $2$ abelian ideal $\mathfrak a$, and there exists an orthonormal basis $\{f_1,f_2\}$ of 
${\mathfrak a}^\perp$ such that 
\[ [f_1,f_2]=\mu f_2, \quad f_2=Jf_1, \quad \ad_{f_2}|_{\mathfrak a}=0 \text{ and } 
\ad_{f_1}|_{\mathfrak a}= \lambda I + B, \]
for some $\mu,\lambda\in\R$, $\lambda\neq 0$, and $B\in \mathfrak{u}(n)$. The corresponding Lee 
form is given by $\theta=-2\lambda f^1$. Furthermore, the Lie algebra $\g$ is unimodular if and only 
if $\lambda=-\frac{\mu}{2n}$.
\end{enumerate}
\end{teo}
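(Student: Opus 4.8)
The plan is to work entirely at the Lie algebra level and to exploit the fact that for an almost abelian $\g$ the exterior derivative is controlled by a single endomorphism. First I would normalize the metric decomposition: since $\u$ has codimension one, choose the unit vector $f_1$ spanning $\u^\perp$, so that $\g=\R f_1\oplus\u$ orthogonally and $A:=\ad_{f_1}|_\u$ determines all brackets (recall $\u$ is abelian). Because $J$ is skew-adjoint, $f_2:=Jf_1$ is a unit vector orthogonal to $f_1$, hence $f_2\in f_1^\perp=\u$; setting $\mathfrak{a}:=\{f_1,f_2\}^\perp$ one checks $\mathfrak{a}$ is $J$-invariant of dimension $2n$ and $\u=\R f_2\oplus\mathfrak{a}$. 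In particular $\ad_{f_2}|_{\mathfrak{a}}=0$ holds automatically, since $f_2,\mathfrak{a}\subseteq\u$. With $\{f^1,f^2\}\cup\mathfrak{a}^*$ the dual coframe, the structure equations read $df^1=0$ and $d\alpha=-f^1\wedge A^*\alpha$ for $\alpha\in\u^*$, and the fundamental form splits as $\omega=f^1\wedge f^2+\sigma$ with $\sigma\in\alt^2\mathfrak{a}^*$ the K\"ahler form of the induced Hermitian structure on $\mathfrak{a}$.

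The key computation is $d\omega=-f^1\wedge D_{A^*}\sigma$, where $D_{A^*}$ is the derivation of $\alt^\bullet\u^*$ induced by $A^*$ (the $f^1\wedge f^2$ summand is closed). Writing $Af_2=\mu f_2+b$ with $b\in\mathfrak{a}$, and decomposing $A|_{\mathfrak{a}}$ into its $\mathfrak{a}$-component $C$ and its $\R f_2$-component $\phi\in\mathfrak{a}^*$, a short manipulation gives $D_{A^*}\sigma=f^2\wedge\iota_b\sigma+D_{C^*}\sigma$. I would then expand $\theta\wedge\omega$ for $\theta=\theta_0 f^1+\theta_2 f^2+\theta_{\mathfrak{a}}$ and match $d\omega=\theta\wedge\omega$ according to the type of the resulting forms. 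This yields four equations: from the $f^2\wedge\alt^2\mathfrak{a}^*$ part, $\theta_2=0$; from the $f^1\wedge f^2\wedge\mathfrak{a}^*$ part, $\theta_{\mathfrak{a}}=-\iota_b\sigma$; from the $f^1\wedge\alt^2\mathfrak{a}^*$ part, $D_{C^*}\sigma=-\theta_0\sigma$; and from the $\alt^3\mathfrak{a}^*$ part, $\theta_{\mathfrak{a}}\wedge\sigma=0$.

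Next I would feed in integrability. Computing the Nijenhuis tensor on $N_J(f_1,w)$ for $w\in\mathfrak{a}$ (all other components vanish because $\u$ is abelian) produces, upon separating the $f_1$-, $f_2$- and $\mathfrak{a}$-components, exactly $\phi=0$ and $CJ=JC$; in particular $A(\mathfrak{a})\subseteq\mathfrak{a}$, so $\mathfrak{a}$ is an ideal and $\g=\mathfrak{a}^\perp\ltimes\mathfrak{a}$. For the main case $\dim\g'\ge 2$ I would use the Lefschetz-type fact that, when $\dim\mathfrak{a}=2n\ge 4$, wedging with the symplectic form $\sigma$ is injective on $1$-forms; hence $\theta_{\mathfrak{a}}\wedge\sigma=0$ forces $\theta_{\mathfrak{a}}=0$, and then $\theta_{\mathfrak{a}}=-\iota_b\sigma$ together with nondegeneracy of $\sigma$ gives $b=0$, i.e. $[f_1,f_2]=\mu f_2$. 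Finally I rewrite $D_{C^*}\sigma=-\theta_0\sigma$ as $C+C^{\mathrm t}=-\theta_0\,\I$ using $CJ=JC$ (here $C^{\mathrm t}$ is the metric transpose), so the symmetric part of $C$ is scalar: $C=\lambda\,\I+B$ with $\lambda=-\theta_0/2$ and $B:=C-\lambda\I$ skew-adjoint and commuting with $J$, that is $B\in\u(n)$. This also identifies the Lee form as $\theta=-2\lambda f^1$, and $\lambda\ne 0$ because $\theta\ne 0$.

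It remains to dispose of the case $\dim\g'=1$, to settle unimodularity, and to prove the converse. Since $C$ commutes with $J$ it is complex-linear, so $\operatorname{rank}C$ is even; thus $\dim\g'=1$ forces $C=0$, after which the equations above collapse to $\theta_0=0$, $\theta_{\mathfrak{a}}=-\iota_b\sigma\ne 0$ and $\theta_{\mathfrak{a}}\wedge\sigma=0$, which (again by the Lefschetz fact) is possible only when $2n=2$; diagonalizing $\ad_{f_1}$ on the abelian ideal $\u$ then identifies $\g$ with $\h_3\times\R$ when $\mu=0$ and with $\aff(\R)\times\R^2$ when $\mu\ne 0$. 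For unimodularity one computes $\tr\ad_{f_1}=\mu+2n\lambda$ (the trace of $\ad_u$ vanishes for $u\in\u$), which is zero precisely when $\lambda=-\mu/2n$. The converse is a direct verification: if $B\in\u(n)$ then $B$ preserves $\sigma$, so $D_{C^*}\sigma=2\lambda\sigma$ and hence $d\omega=-2\lambda f^1\wedge\sigma=(-2\lambda f^1)\wedge\omega$, exhibiting the LCK structure with $\theta=-2\lambda f^1$. I expect the main obstacle to be the low-dimensional bookkeeping: when $n=1$ one has $\sigma\wedge\sigma=0$, so the $\alt^3\mathfrak{a}^*$ equation is vacuous and $b$ is not forced to vanish by this argument alone, so the four-dimensional cases must be handled with additional care (or quoted from the surface classification) to match the stated normal form.
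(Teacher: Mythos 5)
Your overall strategy --- fixing the unit normal $f_1$ to the codimension-one abelian ideal $\u$, setting $f_2=Jf_1$ and $\mathfrak a=\{f_1,f_2\}^\perp$, writing $\ad_{f_1}|_{\u}$ in block form with entries $\mu$, $b$, $\phi$, $C$, and matching types in $d\omega=\theta\wedge\omega$ --- is the right one (the survey only quotes the theorem from \cite{AO1}, but this is how the original argument is structured). Your four type-equations are correct; the Nijenhuis computation does give exactly $\phi=0$ and $CJ=JC$; the Lefschetz injectivity of $\sigma\wedge\cdot$ on $1$-forms for $2n\ge 4$ is valid; the identity $C+C^{\mathrm t}=-\theta_0\I$ correctly yields $C=\lambda \I+B$ with $B\in\u(n)$ and $\theta=-2\lambda f^1$; the even-rank argument handles $\dim\g'=1$; and the trace computation and the converse verification are fine.

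The one genuine gap is the one you flag yourself: when $n=1$ the conditions $\theta_{\mathfrak a}\wedge\sigma=0$ and $(\iota_b\sigma)\wedge\sigma=0$ are vacuous, so nothing in your list of equations forces $b=0$ in the four-dimensional instance of part (2), and the normal form $[f_1,f_2]=\mu f_2$ is not established there. The missing ingredient is the closedness of $\theta$, which your system never imposes (it is automatic in real dimension at least $6$, but not in dimension $4$). Concretely, $d\theta=-f^1\wedge(\theta\circ A)$, and one computes $\theta(Af_2)=-\sigma(b,b)=0$ while $\theta(Aw)=-\sigma(b,Cw)$ for $w\in\mathfrak a$; since $C$ commutes with $J$ and has symmetric part $\lambda\I$, in complex dimension one $C=\lambda\I+\beta J$ is invertible as soon as $C\neq 0$, and $C\neq 0$ is forced by $\dim\g'\ge 2$. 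Hence $d\theta=0$ gives $\sigma(b,\cdot)=0$, i.e.\ $b=0$, which also rules out $\lambda=0$. Adding this observation --- together with an explicit verification of the LCK structures on $\h_3\times\R$ and $\aff(\R)\times\R^2$ for the ``if'' direction of part (1), which you only gesture at --- completes the proof.
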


\medskip

We note that the left invariant LCK structures obtained on the Lie groups corresponding to the Lie algebras in Theorem \ref{lcK} (2) or $\mathfrak{aff}(\R)\times \R^2$ from 
Theorem \ref{lcK} (1) are never Vaisman. This follows from Lemma \ref{ad_A-antisim} below. On the other hand, any LCK structure on $\mathfrak{h}_3\times \R$ from Theorem \ref{lcK} (1) is Vaisman (see 
\cite{AO,S}).

\

However, almost abelian solvmanifolds equipped with invariant LCK structures are very scarce, since we have shown in \cite{AO1} that they only occur in dimension 4.

\begin{teo}\label{dim-6}
If $G$ is as above with $\mu\neq0$ and $\dim G \geq 6$, i.e. $n\geq 2$, then $G$ admits no lattice.
\end{teo}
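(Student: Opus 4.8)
The plan is to translate the existence of a lattice into an integrality condition on the one-parameter group $\varphi$, and then to obstruct that condition spectrally. First I would recall the structure: writing $\mathfrak{u}=\R f_2\oplus\mathfrak{a}$, this is the nilradical of $\g$ (since $\ad_{f_1}$ acts on it with nonzero real eigenvalues, $\g$ is not nilpotent and $\mathfrak{u}$ has codimension one), and $G=\R\ltimes_\varphi\R^{2n+1}$ with $\varphi(t)=e^{t\,\ad_{f_1}|_{\mathfrak{u}}}$. If $\Gamma$ is a lattice, then by the standard theory of lattices in solvable Lie groups (cf.\ \cite{R}) the intersection $L=\Gamma\cap\exp\mathfrak{u}$ is a lattice in $\R^{2n+1}$ and $\Gamma/L\cong\Z$ is generated by an element whose conjugation action on $\R^{2n+1}$ is $\varphi(t_0)$ for some $t_0\neq0$. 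Hence $\varphi(t_0)$ preserves $L$ and is therefore conjugate to a matrix in $GL(2n+1,\Z)$; in particular its characteristic polynomial $p$ lies in $\Z[x]$ and $\det\varphi(t_0)=\pm1$. Since this determinant equals $e^{t_0(\mu+2n\lambda)}>0$ it must be $1$, which forces $\lambda=-\tfrac{\mu}{2n}$ (equivalently, $G$ is unimodular, as it must be by \cite{Mi}) and gives $p$ constant term $-1$.

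Next I would analyze the spectrum of $\varphi(t_0)$. Because $B\in\mathfrak{u}(n)$ has purely imaginary eigenvalues, $O:=e^{t_0B}$ is orthogonal and $\varphi(t_0)=e^{t_0\mu}\oplus e^{t_0\lambda}O$. Thus the eigenvalues are $a:=e^{t_0\mu}$, occurring once along $f_2$, together with $2n$ eigenvalues $re^{\pm i\theta_j}$ of common modulus $r:=e^{t_0\lambda}=a^{-1/(2n)}$ coming from $\mathfrak{a}$. Replacing $t_0$ by $-t_0$ if necessary, I may assume $a>1$, so that $r<1$ and $a$ is the unique eigenvalue of modulus greater than $1$. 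A short argument then shows $p$ is irreducible: any monic factor of $p$ in $\Z[x]$ other than the minimal polynomial of $a$ would have all its roots among the $re^{\pm i\theta_j}$, hence of modulus $<1$, forcing its nonzero integer constant term to have modulus $<1$, which is impossible. Since $a$ is a simple root, $p$ is the minimal polynomial of $a$, and I conclude that $a$ is a Pisot unit of degree $2n+1$ all of whose conjugates have the \emph{same} modulus $r$ and occur in the conjugate pairs $re^{\pm i\theta_j}$.

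The heart of the matter, and the step I expect to be the main obstacle, is to show that such an algebraic number cannot exist once $n\ge2$. Writing $p(x)=(x-a)\,r^{2n}\chi(x/r)$, where $\chi$ is the self-reciprocal, unit-circle-rooted characteristic polynomial of the orthogonal matrix $O$, the requirement $p\in\Z[x]$ amounts to $2n$ integrality equations in the $n+1$ real parameters $r,\cos\theta_1,\dots,\cos\theta_n$. For $n=1$ this system is balanced and does admit solutions — precisely the data underlying the Inoue surfaces of type $S^+$ of Example \ref{ejemplo-4d}, which is why lattices exist in dimension $4$. For $n\ge2$ the system is overdetermined, and the content of the theorem is that it has no solution.

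To make this rigorous I would pass to the Galois closure and exploit that complex conjugation gives $a_i\bar a_i=r^2$ for every conjugate pair, so $r^2$ is realized as a product of two conjugate roots. Comparing the two possible archimedean sizes of a product of two distinct roots — namely $ar=r^{1-2n}>1$ when the dominant root $a$ is involved and $r^2<1$ otherwise — together with the transitivity of the Galois action on the $2n+1$ roots, one expects to force an algebraic relation among the conjugates that exhibits a proper subfield containing $a$, contradicting $[\Q(a):\Q]=2n+1$. Carrying out this last comparison cleanly, so that it genuinely excludes two or more conjugate pairs sharing a common modulus, is the delicate point on which the whole argument turns; this is exactly where the hypothesis $\dim G\ge6$, i.e.\ $n\ge2$, is used.
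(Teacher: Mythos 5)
Your reduction is exactly the paper's: you pass from the lattice $\Gamma$ to the integrality of $\varphi(t_0)$ for some $t_0\neq 0$ (the paper quotes \cite{B} for this, you argue via $\Gamma\cap\exp\mathfrak u$ and Mostow — both are fine), you identify the spectrum as one simple real eigenvalue $e^{t_0\mu}$ together with $2n$ eigenvalues of common modulus $e^{t_0\lambda}$, and you correctly deduce unimodularity and the irreducibility of the characteristic polynomial $p$ when $e^{t_0\mu}\neq 1$. Up to that point the argument is sound and coincides with the paper's strategy.

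The gap is that you never prove the statement on which everything hinges, namely Lemma \ref{integer-polynomial}: a monic $p\in\Z[x]$ of odd degree $2n+1\geq 5$ with constant term $-1$, a simple real root $x_0$, and all remaining roots of equal modulus forces $x_0=1$. You explicitly defer this (``the delicate point on which the whole argument turns''), and the sketch you offer does not close it: comparing the two archimedean sizes $ar$ and $r^2$ of products of pairs of conjugates and invoking transitivity of the Galois action only yields a norm identity of the shape $r^{2|G|-k(2n+1)}=1$, which is compatible with transitivity and gives no contradiction; and ``exhibiting a proper subfield containing $a$'' is not what actually happens. The relation that does the work is different: writing $\beta=x_1\bar x_1=r^2$ for a non-real conjugate pair (such a pair exists once $2n\geq 4$, since the irreducible $p$ has at most two real roots $\pm r$ besides $x_0$), the product-of-roots condition gives $x_0=\pm\beta^{-n}$; applying a Galois element sending $x_0$ to a conjugate of modulus $r$ and comparing $|\tau(\beta)|\in\{r^2,\,r^{1-2n}\}$ with the forced value $r^{-1/n}$ yields $2n^2-n-1=0$, i.e.\ $n=1$, which is the contradiction — and also explains exactly why dimension $4$ escapes. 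Without this (or an equivalent) argument your proposal establishes only the framework of the proof, not the theorem.
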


This theorem is a consequence of the following result about the roots of a certain class of polynomials with integer coefficients. 

\begin{lema}[\cite{AO1}]\label{integer-polynomial}
Let $p$ be a polynomial of the form \[p(x)=x^{2n+1}-m_{2n}x^{2n}+m_{2n-1}x^{2n-1}+\cdots+m_1x-1\]
with $m_j\in\Z$ and $n\geq 2$, and let $x_0,\dots,x_{2n}\in \C$ denote the roots of $p$. If 
$x_0\in\R$ is a simple root and $|x_1|=\dots=|x_{2n}|$, then $x_0=1$ and $|x_j|=1$, 
$j=1,\ldots, 2n$.
\end{lema}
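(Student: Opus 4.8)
The plan is to establish the two conclusions in reverse order of difficulty: first reduce everything to proving that the common modulus $r:=|x_1|=\dots=|x_{2n}|$ equals $1$, and only then deduce $x_0=1$ by a Kronecker-type argument.

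\emph{Setup and normalizations.} First I would read off from Vieta's formulas that the product of all roots is $(-1)^{2n+1}\cdot(-1)=1$, so $x_0\prod_{i\ge1}x_i=1$. Since $p$ has real coefficients, the non-real roots among $x_1,\dots,x_{2n}$ occur in conjugate pairs, so $\prod_{i\ge1}x_i$ is a \emph{real} number of modulus $r^{2n}$; hence $x_0=\epsilon\, r^{-2n}$ with $\epsilon=\pm1$. Because $p$ is monic with integer coefficients and constant term $-1$, every root is an algebraic integer, in fact an algebraic unit; applying this to the reversed polynomial $\hat p(x)=-x^{2n+1}p(1/x)$ (again monic, integer, constant term $-1$), whose roots are the $1/x_i$, shows that the hypotheses are symmetric under $r\leftrightarrow 1/r$ and $x_i\leftrightarrow 1/x_i$. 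I may therefore assume $r\ge1$ and aim to prove $r=1$.

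\emph{The finish, assuming $r=1$.} If $r=1$ then also $|x_0|=r^{-2n}=1$, so all $2n+1$ roots lie on the unit circle; being algebraic integers, Kronecker's theorem forces each of them to be a root of unity, i.e. $p$ is a product of cyclotomic polynomials. The real simple root $x_0$ is then $\pm1$. Computing the product of the roots of each cyclotomic factor (namely $+1$ for $\Phi_1$ and for each $\Phi_d$ with $d\ge3$, and $-1$ for $\Phi_2$) and comparing with $\prod x_i=1$ shows that the root $-1$ occurs with \emph{even} multiplicity; since $x_0$ is simple this excludes $x_0=-1$, leaving $x_0=1$ and $|x_j|=1$, as claimed.

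\emph{The crux: proving $r=1$.} Here I would exploit that $q(x)=\prod_{i\ge1}(x-x_i)=p(x)/(x-x_0)$ has all roots on $|z|=r$ and real coefficients, hence is self-inversive: writing $q(x)=\sum_{m=0}^{2n}b_m x^m$ one gets $b_m=\epsilon\, r^{2(n-m)}b_{2n-m}$. On the other hand, integrality of $p=(x-x_0)q$ gives $b_{m-1}-x_0 b_m\in\Z$ for every $m$. Combining the two families at the symmetric pair of indices $m=1$ and $m=2n$ eliminates $b_{2n-1},b_1,b_0$ and yields that $w:=r^2$ is a root of the integer polynomial $\epsilon\, w^{2n+1}-m_1 w^{n+1}+m_{2n}w^{n}-\epsilon$. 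Repeating with the remaining symmetric pairs $(m,2n-m)$ produces further integer polynomial relations satisfied by $w$. Since self-inversivity leaves only the $n+1$ real parameters $r,b_1,\dots,b_n$ free while these must reproduce the $2n$ integers $m_1,\dots,m_{2n}$, the system is \emph{overdetermined} as soon as $n\ge2$; eliminating the auxiliary $b_j$ should leave at least two coprime integer polynomials vanishing at $w$ whose only common positive real root is $w=1$.

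\emph{Where the difficulty lies, and the role of $n\ge2$.} The main obstacle is exactly this elimination: showing that the over-determined system \emph{forces} $w=1$ rather than merely constraining it. That the hypothesis $n\ge2$ is indispensable is visible from the case $n=1$, where the system is square and genuinely admits solutions with $r\ne1$ --- for instance $x^{3}-x-1$ (the minimal polynomial of the plastic number) is a monic integer cubic with constant term $-1$, a simple real root, and a conjugate pair of equal modulus different from $1$. Any correct argument must therefore invoke $n\ge2$ precisely at the elimination. An appealing way to package it is to observe that both $w=r^2$ and $1/w$ are algebraic integers (by the $r\leftrightarrow1/r$ symmetry), so $w$ is a unit, and then to argue that the equimodularity of the $2n\ge4$ roots forces every Galois conjugate of $w$ onto the unit circle, whence Kronecker gives $w=1$. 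I expect verifying this final equimodularity-of-conjugates statement to be the technical heart of the proof.
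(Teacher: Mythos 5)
The survey itself does not prove this lemma --- it only cites \cite{AO1} --- so your proposal has to be judged on its own terms rather than against a proof printed here. Your setup and your endgame are correct and check out: the product of the roots is $1$, hence $|x_0|=r^{-2n}$; passing to $-x^{2n+1}p(1/x)$ lets you assume $r\ge 1$; the self-inversive relation $b_m=\epsilon r^{2(n-m)}b_{2n-m}$ for $q=p/(x-x_0)$ is right, and the derived relation $\epsilon w^{2n+1}-m_1w^{n+1}+m_{2n}w^{n}-\epsilon=0$ for $w=r^2$ is correct (I verified it); and once $r=1$ is known, Kronecker's theorem plus the parity of the multiplicity of $-1$ does force $x_0=1$. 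The example $x^3-x-1$ is a genuine counterexample for $n=1$ and correctly locates where $n\ge 2$ must enter.

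The problem is that the central claim $r=1$ is never actually proved, and you acknowledge as much (``I expect verifying this \dots to be the technical heart of the proof''). Neither sketch closes the gap. First, ``the system is overdetermined'' does not imply that its solutions have $w=1$: the integers $m_1,\dots,m_{2n}$ are given, not generic, so consistency is guaranteed by hypothesis, and you neither carry out the elimination nor show that the resulting integer polynomials are coprime away from $w=1$; the one relation you do derive has positive real roots different from $1$ for many values of $m_1,m_{2n}$, so everything rests on the unperformed elimination. Second, the claim that equimodularity of the $2n$ roots ``forces every Galois conjugate of $w$ onto the unit circle'' is unsubstantiated and essentially restates the conclusion: the conjugates of $r^2$ lie among products $x_ix_j$ of roots, and those involving $x_0$ have modulus $r^{1-2n}\ne r^2$ when $r\ne 1$, so they are not equimodular and Kronecker does not apply directly. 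A concrete ingredient you are missing, which settles most of the problem immediately, is divisibility of constant terms: any monic irreducible factor $f\in\Z[x]$ of $p$ satisfies $f(0)\mid p(0)=-1$, hence $|f(0)|=1$; if $f$ omits $x_0$ this gives $r^{\deg f}=1$, while if $f$ is the minimal polynomial of $x_0$ of degree $d$ it gives $r^{\,d-1-2n}=1$. Thus $r=1$ unless $p$ is irreducible, and the remaining case --- $p$ irreducible with one conjugate of modulus $r^{-2n}$ and $2n$ conjugates of modulus $r$ --- still needs a dedicated argument that your proposal does not supply.
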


\medskip

Indeed, the proof of Theorem \ref{dim-6} goes as follows. If $\g=\R f_1\ltimes \R^{2n+1}$, then the Lie group $G$ is a semidirect product $G=\R\ltimes_{\varphi} \R^{2n+1}$, where 
$\varphi(t)=\exp(t\ad_{f_1}|_{\R^{2n+1}})$, $t\in\R$. Let us assume that $G$ admits lattices. According to \cite{B}, there exists a $t_0\neq 0$ such that $\varphi(t_0)$ can be conjugated to an integer matrix. Therefore, taking into account that $\dim G\geq 6$, it follows from Theorem \ref{lcK} that the characteristic polynomial of $\varphi(t_0)$ satisfies the hypothesis of Lemma \ref{integer-polynomial}, hence its only simple root $e^{t_0\mu}$ is equal to 1, equivalently, $t_0\mu=0$. Since $\mu\neq 0$ (otherwise the Hermitian structure would be K\"ahler) we obtain a contradiction. 

\begin{cor}
In  dimensions at least 6 there are no almost abelian solvmanifolds equipped with invariant LCK structures. 
\end{cor}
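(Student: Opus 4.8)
The plan is to derive the statement purely as a consequence of Theorem \ref{lcK} and Theorem \ref{dim-6}, so that the analytic content is already in hand and only a short assembling argument remains. Suppose, for contradiction, that $\Gamma\backslash G$ is an almost abelian solvmanifold of dimension $2n+2\geq 6$ (so $n\geq 2$) carrying an invariant LCK structure. By definition this structure is induced by a left-invariant LCK structure on $G$, which in turn determines an LCK structure $(J,\pint)$ on the Lie algebra $\g$ of $G$. First I would record that, since the lattice $\Gamma$ exists, the Milnor-type result recalled in Section 2 forces $G$, and hence $\g$, to be unimodular.

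Next I would feed $(\g,J,\pint)$ into the classification of Theorem \ref{lcK}. Because $\dim\g=2n+2\geq 6$, the two algebras appearing in case (1), namely $\h_3\times\R$ and $\aff(\R)\times\R^2$, are excluded on dimensional grounds, as both are $4$-dimensional. Therefore $\g$ must be of the type described in case (2): $\g=\mathfrak a^\perp\ltimes\mathfrak a$ with $[f_1,f_2]=\mu f_2$ and $\ad_{f_1}|_{\mathfrak a}=\lambda I+B$ for some $B\in\u(n)$, where crucially $\lambda\neq 0$ (otherwise the Lee form $\theta=-2\lambda f^1$ would vanish and the structure would be K\"ahler).

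The decisive step is then to eliminate the value $\mu=0$, which is the only situation not addressed directly by Theorem \ref{dim-6}. For this I would use the unimodularity criterion of Theorem \ref{lcK}(2), namely that $\g$ is unimodular if and only if $\lambda=-\frac{\mu}{2n}$. If $\mu=0$, this equality would force $\lambda=0$, contradicting $\lambda\neq 0$; hence necessarily $\mu\neq 0$. Having established $\mu\neq 0$ together with $\dim G\geq 6$, Theorem \ref{dim-6} applies and asserts that $G$ admits no lattice, contradicting the existence of $\Gamma$. This contradiction yields the corollary.

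I do not expect a genuine difficulty here, since the heavy lifting has already been done in Lemma \ref{integer-polynomial} and Theorem \ref{dim-6}. The one point that requires care, and which I regard as the main obstacle, is the case distinction on $\mu$: one must observe that unimodularity, which is \emph{forced} by the presence of the lattice, together with $\lambda\neq 0$ automatically rules out $\mu=0$, so that the hypothesis $\mu\neq 0$ of Theorem \ref{dim-6} is in fact always met. The remaining care is simply in checking that case (1) of Theorem \ref{lcK} genuinely cannot contribute in dimension at least $6$.
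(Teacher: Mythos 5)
Your proposal is correct and follows essentially the same route as the paper: the corollary is obtained by combining Theorem \ref{lcK} (which, together with unimodularity forced by the lattice, puts the algebra in case (2) with $\lambda=-\frac{\mu}{2n}\neq 0$, hence $\mu\neq 0$) with Theorem \ref{dim-6}. Your explicit treatment of the $\mu=0$ case is exactly the paper's parenthetical remark that $\mu=0$ would make the structure K\"ahler, so nothing is missing.
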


\

In dimension 4, a unimodular almost abelian Lie algebra admitting an LCK structure is isomorphic either to $\h_3\times\R$ or to $\g_b$, for $b\geq 0$, where $\g_b=\R f_1\ltimes 
\R^3$ and the adjoint action is given by
\[ \ad_{f_1}|_{\R^3}=\begin{pmatrix}
                   1 \\
  & -\frac{1}{2}& -b\\
  & b &  -\frac{1}{2}
                 \end{pmatrix}. \]
It was shown in \cite{AO1} that the associated simply connected almost abelian Lie groups $G_b$ admit lattices for countably many values of the parameter $b$. As mentioned before, it is known that the solvmanifolds associated to $\g_b$ for these values of $b$ are Inoue surfaces of type $S^0$ \cite{Kam, Tr} (see \cite{S1} for an explicit construction of a lattice on some of the Lie groups $G_b$).

\

\section{Oeljeklaus-Toma manifolds}

In this section we review the construction of the Oeljeklaus-Toma manifolds (OT manifolds for short), which appeared in \cite{OT}. These non-K\"ahler compact complex manifolds arise from certain number fields, and they can be considered as generalizations of the Inoue sufaces of type $S^0$.

We recall briefly their construction. Let $K$ denote a number field of degree $n=[K:\Q]$ with $s>0$ real embeddings $\sigma_1,\ldots,\sigma_s$ and $2t>0$ complex embeddings 
$\sigma_{s+1},\ldots, \sigma_{s+2t}$ into $\C$, with $\sigma_{s+i+t}=\overline{\sigma}_{s+i}$ for $i=1,\ldots,t$; therefore $n=s+2t$. For any choice of natural numbers $s,t$, such a field always exists, according to \cite[Remark 1.1]{OT}. 

Let $\mathcal{O}_K$ denote the ring of algebraic integers of $K$, $\mathcal{O}^\times_K$ its multiplicative group of units and set
\[ \mathcal{O}^{\times,+}_K:=\{ a\in \mathcal{O}^\times_K \mid \sigma_i(a)>0,\, i=1,\ldots,s\}.\]
If $\H$ denotes the upper complex half-plane, Oeljeklaus and Toma proved, using the Dirichlet's units theorem, that $\mathcal{O}^{\times,+}_K\ltimes  \mathcal{O}_K$ acts freely on $\H^s\times \C^t$ by
\begin{align*}
(a,b)\cdot & (x_1+iy_1,\ldots,x_s+iy_s,z_1,\ldots,z_t) = \\
           & = (\sigma_1(a)x_1+\sigma_1(b)+i\sigma_1(a)y_1,\ldots, \sigma_s(a)x_s+\sigma_s(b)+i\sigma_s(a)y_s, \\ 
           & \qquad \quad \sigma_{s+1}(a)z_1+\sigma_{s+1}(b),\ldots,\sigma_{s+t}(a)z_t+\sigma_{s+t}(b)).
\end{align*}
Moreover, there exists an \textit{admissible} subgroup $U\subset \mathcal{O}^{\times,+}_K$ such that the action of $U\ltimes \mathcal{O}_K$ is properly discontinuous. For $t=1$, every subgroup of finite index in $\mathcal{O}^{\times,+}_K$ is admissible.

The manifold $(\H^s\times \C^t)/U\ltimes \mathcal{O}_K$ is called an OT manifold of type $(s,t)$ and it is usually denoted $X(K,U)$. It is a compact complex manifold of (complex) dimension $s+t$. These manifolds have many interesting properties; for instance, it was proved in \cite{OT} that  the following holomorphic bundles over an OT manifold $X=X(K,U)$ are flat and admit no global holomorphic section: the bundle of
holomorphic $1$-forms $\Omega^1_X$, the holomorphic tangent bundle $\mathcal{T}_X$ and any positive power $K_X^{\otimes k}$ of the canonical bundle. Moreover, the Kodaira dimension of any OT manifold is $-\infty$ and they can never be K\"ahler.

\

Concerning Hermitian metrics compatible with the complex structure of OT manifolds, the following result is well known when $t=1$:

\begin{prop}[\cite{OT}]
For $s>0$, any OT manifold of type $(s,1)$ admits LCK metrics. 
\end{prop}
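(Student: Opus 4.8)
The plan is to use the characterization of LCK manifolds via the universal covering recalled above: it suffices to produce a K\"ahler metric $\tilde g$ on the universal cover $\tilde X = \H^s\times\C$ of $X=X(K,U)$ together with a homomorphism $\chi:\pi_1(X)\to\R_{>0}$ such that $\gamma^*\tilde g=\chi(\gamma)\,\tilde g$ for every deck transformation $\gamma$. Here $\pi_1(X)\cong U\ltimes\mathcal{O}_K$ acts on $\tilde X$ by the affine holomorphic maps described above: writing the coordinates as $(w_1,\dots,w_s,z)\in\H^s\times\C$, an element $(a,b)$ sends $w_j\mapsto\sigma_j(a)w_j+\sigma_j(b)$ with $\sigma_j(a)>0$, and $z\mapsto\sigma_{s+1}(a)z+\sigma_{s+1}(b)$. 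The arithmetic input I would use is that every unit satisfies $|N(a)|=1$, and for $a\in\mathcal{O}^{\times,+}_K$ all embeddings contribute positively, so with $t=1$ (a single conjugate pair of complex embeddings) this reads
\[ \prod_{j=1}^s\sigma_j(a)\,|\sigma_{s+1}(a)|^2=1. \]

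First I would set $\varphi=\prod_{j=1}^s(\operatorname{Im}w_j)^{-1}$ and define on $\tilde X$ the $(1,1)$-form
\[ \tilde\omega=i\partial\bar\partial\varphi+i\,dz\wedge d\bar z. \]
To see that $\tilde\omega$ is K\"ahler I would invoke the identity $i\partial\bar\partial\varphi=\varphi\,\bigl(i\partial\bar\partial\log\varphi+i\,\partial\log\varphi\wedge\bar\partial\log\varphi\bigr)$: since $\log\varphi=-\sum_j\log\operatorname{Im}w_j$, the first term is the positive-definite diagonal form $\sum_j\frac{i}{4(\operatorname{Im}w_j)^2}dw_j\wedge d\bar w_j$ and the second is a positive-semidefinite rank-one term, so $i\partial\bar\partial\varphi$ is positive definite along the $\H^s$-directions and independent of $z$; adding the flat term $i\,dz\wedge d\bar z$ makes $\tilde\omega$ positive definite on all of $\tilde X$, and it is closed since each summand is.

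Next I would verify the homothety property. Under $(a,b)$ one has $\operatorname{Im}w_j\mapsto\sigma_j(a)\operatorname{Im}w_j$, so as a \emph{function} $\varphi\mapsto\bigl(\prod_j\sigma_j(a)\bigr)^{-1}\varphi=|\sigma_{s+1}(a)|^2\,\varphi$ by the norm relation; since this factor is constant, $\gamma^*(i\partial\bar\partial\varphi)=|\sigma_{s+1}(a)|^2\,i\partial\bar\partial\varphi$. On the other factor $dz\mapsto\sigma_{s+1}(a)\,dz$ (the translation being killed by $d$), so $\gamma^*(i\,dz\wedge d\bar z)=|\sigma_{s+1}(a)|^2\,i\,dz\wedge d\bar z$. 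Both summands scale by the same factor, whence $\gamma^*\tilde\omega=\chi(\gamma)\,\tilde\omega$ with $\chi(a,b)=|\sigma_{s+1}(a)|^2$, which is immediately a homomorphism into $\R_{>0}$. By the universal-cover criterion this yields an LCK metric on $X$; equivalently, since $\gamma^*\varphi=\chi(\gamma)\varphi$, the metric $\varphi^{-1}\tilde g$ is $\pi_1(X)$-invariant and descends to a Hermitian metric on $X$ whose Lee form is the closed invariant $1$-form $-d\log\varphi$. As OT manifolds are never K\"ahler, $\chi$ is nontrivial and the structure is genuinely LCK.

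The step I expect to require the most care is the choice of potential. The naive candidate $\sum_j(\operatorname{Im}w_j)^{-1}$ fails, because its $j$-th summand scales by $\sigma_j(a)^{-1}$ and these factors differ, so no single character governs the metric; replacing the sum by the \emph{product} $\prod_j(\operatorname{Im}w_j)^{-1}$ is exactly what converts the norm-one relation into a single scaling factor $|\sigma_{s+1}(a)|^2$, matching the scaling of $i\,dz\wedge d\bar z$. This is precisely where the hypothesis $t=1$ enters: with one conjugate pair of complex embeddings the product of the real scalings equals $|\sigma_{s+1}(a)|^{-2}$, whereas for $t\ge 2$ the several factors $|\sigma_{s+k}(a)|^2$ cannot be matched simultaneously, consistent with the known fact that higher OT manifolds need not be LCK. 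The only genuinely computational point that remains is checking that the product potential stays strictly plurisubharmonic along $\H^s$, which the logarithmic identity above settles.
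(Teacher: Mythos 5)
Your proof is correct and follows essentially the same route as the paper: both exhibit the K\"ahler potential $\prod_j(\operatorname{Im}w_j)^{-1}+|z|^2$ on $\H^s\times\C$ (the paper writes the first summand as $\prod_j\bigl(i(z_j-\overline{z}_j)\bigr)^{-1}$) and take $\tilde\omega=i\partial\overline{\partial}$ of it. Your write-up merely supplies the details the paper omits, namely the strict plurisubharmonicity via $i\partial\overline{\partial}\varphi=\varphi\bigl(i\partial\overline{\partial}\log\varphi+i\,\partial\log\varphi\wedge\overline{\partial}\log\varphi\bigr)$ and the equivariance $\gamma^*\tilde\omega=|\sigma_{s+1}(a)|^2\tilde\omega$ coming from the norm-one relation for units.
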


Indeed, consider the following potential
\[ 
 F: \H^s\times \C\to \R,\qquad F(z):=\frac{1}{\prod_{j=1}^s(i(z_j-\overline{z}_j))}+|z_{s+1}|^2.
\]
Then $\omega:=i\partial\overline{\partial}F$ gives the desired K\"ahler form on $\H^s\times \C$.

\medskip

Note that for $s = t = 1$ and $U= \mathcal{O}^{\times,+}_K$, the OT manifold $X(K,U)$ is an Inoue surface of type $S^+$ with the metric given in \cite{Tr}.

\

In \cite{K} Kasuya proved another interesting feature of OT manifolds. Indeed, he proved:

\begin{prop}[\cite{K}]
OT manifolds are solvmanifolds.
\end{prop}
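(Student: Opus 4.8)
The plan is to exhibit $X(K,U)$ as a quotient $\Gamma\backslash G$, where $G$ is a simply connected solvable Lie group acting simply transitively on $\H^s\times\C^t$ and $\Gamma$ is a lattice identified with $U\ltimes\mathcal{O}_K$. First I would build the group: since the real affine group $\mathrm{Aff}^+(\R)\cong\R\ltimes\R_{>0}$ acts simply transitively on $\H$ by $w\mapsto\lambda w+\mu$ (with $\lambda>0$), and $\C$ is acted on simply transitively by translations combined with complex scalings, I look for a semidirect product $G=\R^s\ltimes_\varphi\R^{s+2t}$ of real dimension $2(s+t)=\dim_\R(\H^s\times\C^t)$. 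Identifying $\R^{s+2t}\cong\R^s\times\C^t$, the translation factor should act by $w_j\mapsto w_j+\xi_j$ and $z_k\mapsto z_k+\zeta_k$, while $(t_1,\dots,t_s)\in\R^s$ acts by $w_j\mapsto e^{t_j}w_j$ and $z_k\mapsto e^{b_k(t)+ic_k(t)}z_k$ for suitable linear forms $b_k,c_k$ on $\R^s$. The orbit map at the base point $(i,\dots,i,0,\dots,0)$ then gives a diffeomorphism $G\cong\H^s\times\C^t$, and $G$ is visibly metabelian, hence solvable, and simply connected.

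The forms $b_k,c_k$ must be chosen so that the OT action of $(a,b)$ coincides with the action of a group element of $G$. Comparing with $w_j\mapsto\sigma_j(a)w_j+\sigma_j(b)$ and $z_k\mapsto\sigma_{s+k}(a)z_k+\sigma_{s+k}(b)$, I need $t_j=\log\sigma_j(a)$ (legitimate since $\sigma_j(a)>0$), $\xi_j=\sigma_j(b)$, $\zeta_k=\sigma_{s+k}(b)$, together with
\[ b_k\big(\log\sigma_1(a),\dots,\log\sigma_s(a)\big)=\log|\sigma_{s+k}(a)|,\qquad c_k\big(\log\sigma_1(a),\dots,\log\sigma_s(a)\big)\equiv\arg\sigma_{s+k}(a)\pmod{2\pi} \]
for every $a\in U$. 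Here is where the hypotheses enter: by Dirichlet's units theorem the logarithmic embedding sends $\mathcal{O}^{\times,+}_K$ to a lattice in the trace-zero hyperplane, and admissibility of $U$ means precisely that $a\mapsto(\log\sigma_1(a),\dots,\log\sigma_s(a))$ maps $U$ isomorphically onto a full lattice $L$ in $\R^s$. The assignment $\log\sigma_\bullet(a)\mapsto\log|\sigma_{s+k}(a)|$ is the restriction of a linear map, because the log-vectors of units lie in a fixed $s$-dimensional subspace on which the first $s$ coordinates form a coordinate system; extending $\R$-linearly yields $b_k$, while lifting the homomorphism $U\to\R/2\pi\Z$ on a $\Z$-basis of $L$ and extending linearly yields $c_k$ (only its class mod $2\pi$ is relevant). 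Since $b_k,c_k$ are linear, $\varphi(t)\varphi(t')=\varphi(t+t')$, so $G$ is a genuine Lie group.

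Finally I would verify that $\Gamma:=U\ltimes\mathcal{O}_K$ sits in $G$ as a lattice. The subgroup $\mathcal{O}_K$ embeds via the canonical (Minkowski) embedding $b\mapsto(\sigma_1(b),\dots,\sigma_s(b),\sigma_{s+1}(b),\dots,\sigma_{s+t}(b))\in\R^s\times\C^t$ as a full lattice, so $\R^{s+2t}/\mathcal{O}_K$ is a torus; and $U$ embeds through the logarithmic map as the lattice $L\subset\R^s$, so $\R^s/L$ is a torus. Because each unit $a$ scales the canonical embedding of $\mathcal{O}_K$ onto itself (as $\sigma_\bullet(ab)=\sigma_\bullet(a)\sigma_\bullet(b)$), the $\varphi$-action preserves $\mathcal{O}_K$, and the two compactness statements combine to show that $\Gamma$ is discrete and cocompact in $G$. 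Under the diffeomorphism $G\cong\H^s\times\C^t$ the left action of $\Gamma$ becomes exactly the OT action of $U\ltimes\mathcal{O}_K$, whence $\Gamma\backslash G\cong X(K,U)$, exhibiting the OT manifold as a solvmanifold. The main obstacle is the second paragraph: producing the linear forms $b_k,c_k$ honestly from Dirichlet's theorem and the admissibility of $U$, since this arithmetic input is precisely what makes $U\ltimes\mathcal{O}_K$ a lattice rather than merely a discrete subgroup.
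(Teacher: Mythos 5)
Your construction is exactly Kasuya's argument as sketched in the paper: the same solvable group $G=\R^s\ltimes_\phi(\R^s\times\C^t)$ with diagonal action $e^{t_j}$ on the real factors and $e^{\psi_k+i\varphi_k}$ (with $\psi_k,\varphi_k$ linear in $t$) on the complex ones, the same identification of $G$ with $\H^s\times\C^t$, and the same embedding of $U\ltimes\mathcal{O}_K$ as a lattice. Your proposal is correct, and in fact supplies the arithmetic details (Dirichlet's units theorem, admissibility of $U$, and the lift of $a\mapsto\arg\sigma_{s+k}(a)$ modulo $2\pi$) that the paper's sketch compresses into ``it can be shown that there exist $b_{ik},c_{ik}\in\R$.''
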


In order to prove this, Kasuya considers the Lie group $G=\R^s\ltimes_\phi(\R^s\times \C^t)$, where the action of $\phi$ can be described as follows. For $a\in U$, set $t_i=\log|\sigma_i(a)|$, $i=1,\ldots,s$; it can be shown that there exist $b_{ik},\,c_{ik}\in \R$ such that
\[ |\sigma_{s+k}(a)|=\exp\left(\frac12 \sum_ib_{ik}t_i\right) \quad \text{and } \quad \sigma_{s+k}(a)=\exp\left(\frac12 \sum_ib_{ik}t_i+\sum_i c_{ik}t_i\right). \]
Now, for $(t_1,\ldots,t_s)\in\R^s$, we have
\[\phi(t_1,\ldots,t_s)=\operatorname{diag}(e^{t_1},\ldots,e^{t_s},e^{\psi_1+i\varphi_1},\ldots,e^{\psi_t+i\varphi_t}),\]
where $\psi_k=\frac12 \sum_ib_{ik}t_i$, $\varphi_k=\sum_i c_{ik}t_i$. It is clear that $G$ is a solvable Lie group.

The subgroup $U\ltimes \mathcal{O}_K$ can be embedded as a lattice in $G$; moreover, under the correspondence 
\begin{align*}
\H^s\times \C^t  & \ni (x_1+iy_1,\ldots,x_s+iy_s,z_1,\ldots,z_t) \\
& \mapsto (x_1,\log y_1,\ldots,x_s,\log y_s,z_1,\ldots,z_t) \in \R^s\times \R^s\times \C^t,
\end{align*}
the action of $U\ltimes \mathcal{O}_K$ on $\H^s\times \C^t $ can be identified with the left action of the lattice $U\ltimes \mathcal{O}_K$ on $G$. Therefore, the OT manifold 
$X(K,U)=(\H^s\times \C^t)/U\ltimes \mathcal{O}_K$ may be considered as the solvmanifold $U\ltimes \mathcal{O}_K\backslash G$.

Furthermore, it can be seen that  the natural complex structure on $G$ is left invariant and the induced complex structure on $X(K,U)$ coincides with the complex structure constructed in \cite{OT}. 

If $\g$ denotes the Lie algebra of $G$, then $\g$ has real dimension $2(s+t)$ and there is a basis $\{\alpha_1,\ldots,\alpha_s,\beta_1,\ldots,\beta_s,\gamma_1,\ldots,\gamma_{2t}\}$ of $\g$ such that the differential is given by
\begin{align*}
 d\alpha_j & =0, \\
 d\beta_j & =-\alpha_j\wedge\beta_j,\\
 d\gamma_{2k-1} & =\frac12 \sum_j b_{jk}\alpha_j\wedge\gamma_{2k-1}+\sum_j c_{jk}\alpha_j\wedge\gamma_{2k},\\
 d\gamma_{2jk} & =-\sum_j c_{jk}\alpha_j\wedge\gamma_{2k-1}+\frac12 \sum_j b_{jk}\alpha_j\wedge\gamma_{2k},
\end{align*}
for any $j=1,\ldots,s$, $k=1,\ldots,t$. Accordingly, $\g$ has a basis $\{A_1,\ldots,A_s,B_1,\ldots,B_s,C_1,\ldots,C_{2t}\}$ such that the Lie bracket is expressed as
\begin{align}\label{OT}
[A_j,B_j] &=B_j,\nonumber \\
[A_j,C_{2k-1}] &=-\frac12 b_{jk}C_{2k-1}+c_{jk}C_{2k},\\
[A_j,C_{2k}] &=-c_{jk} C_{2k-1}-\frac12 b_{jk}C_{2k},\nonumber 
\end{align}
for any $j=1,\ldots,s$, $k=1,\ldots,t$. In terms of these bases, the complex structure is given by
\[ JA_j=B_j,\qquad JC_{2k-1}=C_{2k}, \]
and a basis of $(1,0)$-forms is given by $\{\alpha_j+i\beta_j, \gamma_{2k-1}+i\gamma_{2k}\mid j=1,\ldots,s$, $k=1,\ldots,t\}$.

\smallskip

For $t=1$, consider the closed $1$-form $\theta\in\g^*$ given by $\theta=\alpha_1+\cdots+\alpha_s$, and the $2$-form $\omega\in\alt^2\g^*$ given by
\[ \omega=2\sum_i \alpha_i\wedge\beta_i + \sum_{i\neq j}\alpha_i\wedge \beta_j + \gamma_1\wedge \gamma_2. \]
Then it can be verified that $d\omega=\theta\wedge\omega$ and, therefore, for $g=\omega(\cdot,J\cdot)$, we have that $(J,g)$ is a left invariant LCK structure on $G$.

\

\begin{ejemplo}
 In \cite{S4} H. Sawai gives an explicit construction of a 6-dimensional OT solvmanifold carrying LCK metrics. He considers the $6$-dimensional group $G$ of matrices given by
\[ G=\left\{ \begin{pmatrix}
      \alpha_1^{t_1}{\alpha'}_1^{t_2} & 0 & 0 & 0 & x_1 \\
      0 & \alpha_2^{t_1}{\alpha'}_2^{t_2} & 0  & 0 & x_2 \\
      0 & 0 & \beta^{t_1}\beta'^{t_2}  & 0 & z \\
      0 & 0 & 0 & \overline{\beta}^{t_1}\overline{\beta}'^{t_2}  & \overline{z} \\
      0 & 0 & 0 & 0 & 1
     \end{pmatrix}\, : t_1,t_2,x_1,x_2\in\R,\, z\in \C\right\}, \]
where $\alpha_1,\alpha_2\in\R$ and $\beta,\overline{\beta}\in\C$ are the different roots of the polynomial $f_1(x) = x^4-2x^3-2x^2 + x + 1$, and 
$\alpha'_i=\alpha_i^{-1}+\alpha_i^{-2}$, $\beta'= \beta^{-1}+\beta^{-2}$. It can be seen that $\alpha'_1,\alpha'_2,\beta',\overline{\beta}'$ are the different roots of the 
polynomial $f_2(x) = x^4-4x^3 +4x^2-3x +1$. 

Sawai shows the existence of lattices in $G$, by carefully studying properties of the polynomials $f_1$ and $f_2$, which are the characteristic polynomials of the following 
unimodular matrices:
\[ B_1=\begin{pmatrix}
        0 & 0 & 0 & -1 \\ 1 & 0 & 0 & -1 \\ 0 & 1 & 0 & 2 \\ 0 & 0 & 1 & 2
       \end{pmatrix}, \quad 
   B_2=\begin{pmatrix}
        2 & 0 & 1 & 0 \\ 2 & 2 & 1 & 1 \\ -1 & 2 & 0 & 1 \\ 0 & -1 & 0 & 0
       \end{pmatrix}. \]
Indeed, it is easy to see that $B_1B_2=B_2B_1$ and therefore these matrices are simultaneously diagonalizable over $\C$, and let $P\in \operatorname{GL}(4,\C)$ such that 
$PB_1P^{-1}$ and $PB_2P ^{-1}$ are both diagonal. Sawai then shows that 
\[ \Gamma:=\left\{ \begin{pmatrix}
            PB_1^{s_1}B_2^{s_2}P^{-1} & P\begin{pmatrix} x_1\\x_2\\y_1\\y_2 \end{pmatrix} \\
            0 \quad 0 \quad 0 \quad 0 & 1            
           \end{pmatrix} \, : \, s_j,x_j,y_j\in \Z\right \}\]
is a lattice in $G$. 

It can be shown that the Lie brackets of the Lie algebra $\g$ of $G$ are isomorphic to the Lie brackets \eqref{OT} with $t=1$ for some constants $b_{jk},\,c_{jk}$, thus the solvmanifold $\Gamma\backslash G$ is in fact an OT solvmanifold carrying an invariant LCK structure. 
\end{ejemplo}

\

OT manifolds have been intensely studied lately; see for instance the survey \cite{OVu}. We summarize next some of their properties:
\begin{itemize}
\item The LCK structure on an OT manifold with $s=2,t=1$ is a counterexample to a conjecture formulated by I. Vaisman, which stated that any compact LCK manifold has odd first Betti number. Indeed, it was shown in \cite{OT} that this 6-dimensional LCK manifold has first Betti number $b_1=2$ (more generally, $b_1=s$ for any OT manifold of type $(s,t)$). Using the description of the OT manifolds as solvmanifolds, Kasuya proved in \cite{K1} that the Betti numbers of an OT manifold of type $(s,1)$ are given by 
$b_p=b_{2s+2-p}=\binom{s}{p}$ for $1\leq p\leq s$ and $b_{s+1}=0$. Therefore, for even $s$ these manifolds also provide counterexamples to the Vaisman's conjecture. 

\item Kasuya proved in \cite{K} that OT manifolds do not admit any Vaisman metric, by studying the Morse-Novikov cohomology $H^*_\theta$ of solvmanifolds $\Gamma\backslash G$ equipped with LCS structures, where $\theta$ denotes the corresponding Lee form of the LCS structure.

\item Ornea and Verbitsky proved in \cite{OV5} that OT manifolds of type $(s,1)$ contain no non-trivial complex submanifolds. This generalizes the fact that Inoue surfaces carry no closed analytic curve.

\item In \cite{Z} the Chern-Ricci flow, an evolution equation of Hermitian metrics, is studied on a family of OT manifolds. It is shown that, after an initial conformal change, the flow converges, in the Gromov-Hausdorff sense, to a torus with a flat Riemannian metric determined by the OT-manifolds themselves.

\item In \cite{IO} both the de Rham cohomology and the Morse-Novikov cohomology of OT manifolds of general type are studied. In fact, the cohomology groups $H_{dR}^*(X(K,U))$ and $H_{\theta}^*(X(K,U))$ (for any closed 1-form $\theta$ on $X(K,U)$) are computed in terms of invariants associated to the background number field $K$. 
\end{itemize}

\smallskip

\begin{rem}
We would like to mention that OT manifolds are the only known examples of non-Vaisman LCK solvmanifolds in dimensions greater than $4$. It would be very interesting to find new examples.
\end{rem}

\

As mentioned previously, OT manifolds with $t=1$ admit LCK metrics. The natural question is to ask about the existence of such metrics on OT manifolds with $t>1$. Oeljeklaus and Toma proved already in \cite[Proposition 2.9]{OT} that an OT manifold with $s=1$ and $t\geq 2$ admits no LCK metric. Moreover, it follows from the proof of this result that if a LCK metric exists on a OT manifolds of type $(s,t)$ then the complex embeddings satisfy the condition
\begin{equation}\label{igual_modulo}
 |\sigma_{s+1}(u)|=\cdots =|\sigma_{s+t}(u)|, \quad \forall u \in U. 
\end{equation}
This fact was reobtained by V. Vuletescu in \cite{Vu}, where he also proves that an OT manifold of type $(s,t)$ admits no LCK metric whenever $1<s\leq t$. Later, in an appendix of \cite{D} written by L. Battisti, it was shown that condition \eqref{igual_modulo} is also sufficient for the existence of a LCK metric. Therefore there is the following result:

\begin{teo}
Let $X$ be an OT manifold of type $(s,t)$. Then $X$ admits a LCK metric if and only if \eqref{igual_modulo} holds.
\end{teo}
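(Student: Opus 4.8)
The statement is a biconditional, and I would prove the two implications by quite different techniques: an explicit construction for sufficiency, and a transformation-law obstruction for necessity.

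For sufficiency, assume \eqref{igual_modulo} and generalize the potential used for $t=1$. On the universal cover $\H^s\times\C^t$, with $w_j=x_j+iy_j$ and $z_k$, I would set
\[ F=\Big(\prod_{j=1}^s y_j\Big)^{-1/t}+\sum_{k=1}^t|z_k|^2, \qquad \omega=i\partial\bar\partial F, \]
and verify two things. First, positivity: writing the first summand as $e^u$ with $u=-\tfrac1t\sum_j\log y_j$, the matrix $\partial\bar\partial u$ is diagonal with entries $\tfrac{1}{4t\,y_j^2}>0$ while $\partial u\wedge\bar\partial u$ is positive semidefinite, so $i\partial\bar\partial(\prod_j y_j)^{-1/t}$ is positive definite and $\omega$ is K\"ahler. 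Second, equivariance: under $a\in U$ one has $y_j\mapsto\sigma_j(a)y_j$ and $z_k\mapsto\sigma_{s+k}(a)z_k+\sigma_{s+k}(b)$, so the first summand of $\omega$ scales by $(\prod_j\sigma_j(a))^{-1/t}$ and the $k$-th piece of the second by $|\sigma_{s+k}(a)|^2$. Condition \eqref{igual_modulo} makes all $|\sigma_{s+k}(a)|^2$ equal to a common $\rho(a)^2$, while the norm relation $\prod_j\sigma_j(a)\prod_k|\sigma_{s+k}(a)|^2=N_{K/\Q}(a)=1$ (the norm of a totally positive unit) forces $(\prod_j\sigma_j(a))^{-1/t}=\rho(a)^2$ as well. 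Hence $a^*\omega=\rho(a)^2\,\omega$ is a homothety, and by the covering characterization of LCK metrics recalled in Section 2, $\omega$ descends to an LCK metric on $X$.

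For necessity, assume $X$ carries an LCK metric and lift its fundamental form to a K\"ahler form $\Omega$ on $\H^s\times\C^t$ with $\gamma^*\Omega=\chi(\gamma)\Omega$ for a character $\chi\colon U\ltimes\mathcal{O}_K\to\R_{>0}$. I would first average $\Omega$ over the compact torus $(\R^s\times\C^t)/\mathcal{O}_K$ of translations; since $U$ normalizes $\mathcal{O}_K$ and Haar measure is preserved, this keeps $\Omega$ K\"ahler and $U$-equivariant while making its coefficients (in the invariant coframe $dx_j,dy_j,dz_k$) depend only on $y=(y_1,\dots,y_s)$. Isolating the vertical block $\Omega_{CC}=\sum_{k,l}H_{k\bar l}(y)\tfrac{i}{2}\,dz_k\wedge d\bar z_l$, the equivariance on the diagonal reads
\[ H_{k\bar k}(D_a\,y)=\frac{\chi(a)}{|\sigma_{s+k}(a)|^2}\,H_{k\bar k}(y), \qquad D_a=\operatorname{diag}(\sigma_1(a),\dots,\sigma_s(a)), \]
for every $k$ and every $a\in U$.

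The crux, and the step I expect to be hardest, is to deduce from this law that all the moduli $|\sigma_{s+k}(a)|$ coincide. If two of them differed for some $a$, the ratios $\chi(a)/|\sigma_{s+k}(a)|^2$ would differ in $k$, so iterating the relation along the orbit $\{D_a^m y\}$ would force the diagonal entries $H_{k\bar k}$ to grow and decay at incompatible exponential rates. Since the admissible group $U$ projects to a rank-$s$ lattice under $a\mapsto(\log|\sigma_1(a)|,\dots,\log|\sigma_s(a)|)$ by Dirichlet's unit theorem, one can realize prescribed sign patterns of the exponents, and the resulting behaviour is incompatible with the positive-definiteness of $(H_{k\bar l})$ together with the boundedness forced by compactness of $X$ once the coefficients are compared against a fixed invariant background metric. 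This pluripotential/compactness analysis is precisely the delicate part carried out by Oeljeklaus--Toma and Vuletescu; once it yields \eqref{igual_modulo}, the remaining blocks ($w$--$w$ and $w$--$z$) contribute only routine bookkeeping.
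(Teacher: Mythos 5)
Your sufficiency argument is correct and is exactly the paper's (i.e.\ Battisti's) proof: your potential $F=(\prod_j y_j)^{-1/t}+\sum_k|z_k|^2$ coincides, up to an irrelevant positive constant on the first summand, with the potential $\varphi(z)=\bigl(\prod_{j=1}^s \tfrac{i}{z_j-\overline{z}_j}\bigr)^{1/t}+\sum_k|z_{s+k}|^2$ displayed after the theorem, and your positivity and equivariance computations (including the use of $N_{K/\Q}(a)=1$ for a totally positive unit to match the two homothety factors) are the right ones.

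The necessity direction, however, has a genuine gap in the step you yourself flag as the crux. The averaging over the torus $(\R^s\times\C^t)/\mathcal{O}_K$ and the resulting transformation law $H_{k\bar k}(D_a y)=\chi(a)|\sigma_{s+k}(a)|^{-2}H_{k\bar k}(y)$ are fine, but the contradiction you propose --- incompatible exponential growth rates along orbits versus ``boundedness forced by compactness'' --- does not materialize. The lifted K\"ahler form $\Omega$ is only automorphic, not invariant, so its coordinate coefficients $H_{k\bar k}$ do not descend to $X$ and are not bounded; and the quantities that \emph{do} descend (for instance $\frac{H_{k\bar k}/h_{k\bar k}}{H_{l\bar l}/h_{l\bar l}}$ for an invariant reference metric $h$) transform trivially along the orbits $\{D_a^m y\}$, since $h$ has exactly the same growth in each $z_k$-direction. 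Indeed, positivity plus automorphy alone cannot obstruct anything: $e^{-\sum\lambda_j\log y_j}$ times any invariant Hermitian form satisfies all of your hypotheses on every OT manifold. What is missing is an essential use of $d\Omega=0$ beyond positivity. The standard argument (Oeljeklaus--Toma, Proposition 2.9) runs: after averaging, the coefficients $H_{\alpha\bar\beta}$ depend only on $y$; the K\"ahler symmetry $\partial_{w_j}H_{k\bar l}=\partial_{z_k}H_{j\bar l}$ then gives $\partial_{w_j}H_{k\bar l}=0$ (the right-hand side vanishes because $H_{j\bar l}$ depends only on $y$), and similarly for $\overline{\partial}$, so each $H_{k\bar l}$ with $k,l$ in the $\C^t$-block is a \emph{constant}. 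The transformation law for a positive constant $H_{k\bar k}$ then forces $\chi(a)=|\sigma_{s+k}(a)|^2$ for every $k$ and every $a\in U$, which is \eqref{igual_modulo}. No growth-rate or Kronecker-type analysis is needed at this point; you should replace your orbit argument by this closedness computation.
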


The proof of the sufficiency of condition \eqref{igual_modulo} follows by exhibiting a K\"ahler potential on $\mathbb{H}^s \times \C^t$, given by 
\[ \varphi(z)= \left(\prod_{j=1}^s \frac{i}{z_j-\overline{z}_j} \right)^\frac{1}{t} +\sum_{k=1}^t |z_{s+k}|^2 .\] 
The corresponding K\"ahler form $\omega:=i\partial \overline{\partial}\varphi$ on $\mathbb{H}^s \times \C^t$ gives rise to a LCK metric on the associated OT manifold.

\

The main result of \cite{D} shows that if an OT manifold with $t\geq 2$ admits an LCK metric, then the natural numbers $s$ and $t$ satisfy a strong condition. Indeed, one has:

\begin{teo}
 If an OT manifold with $s\geq 1$, $t\geq 2$, admits an LCK metric then there exist integers $m\geq 0$ and $q\geq 2$ such that
 \[ s = (2t + 2m)q - 2t. \]
 In particular, $s$ is even and $s\geq 2t$.
\end{teo}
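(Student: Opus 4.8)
The plan is to reduce the LCK hypothesis to the arithmetic condition \eqref{igual_modulo} and then analyze it through the Dirichlet logarithmic embedding of units, where the admissibility of $U$ rigidifies the situation. By the characterization recalled just above, an LCK metric forces $|\sigma_{s+1}(u)|=\cdots=|\sigma_{s+t}(u)|$ for every $u\in U$. First I would encode the archimedean data of $K$ in the logarithmic map $\ell\colon\mathcal{O}_K^\times\to\R^{s+t}$ sending $u$ to $(\log|\sigma_1(u)|,\dots,\log|\sigma_s(u)|,\log|\sigma_{s+1}(u)|,\dots,\log|\sigma_{s+t}(u)|)$, whose image is a lattice $\Lambda$ of rank $s+t-1$ inside the hyperplane $P=\{\sum_i x_i+2\sum_j y_j=0\}$ cut out by the product formula for units.

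Next I would exploit admissibility. Since $U$ has rank $s$ and projects isomorphically onto a full lattice in the real coordinates $\R^s$, the real span $W:=\ell(U)\otimes\R$ has dimension $s$, and by \eqref{igual_modulo} it lies in the diagonal locus $\{y_1=\cdots=y_t\}$. A dimension count shows $\{y_1=\cdots=y_t\}\cap P$ is itself $s$-dimensional, so $W$ is forced to equal this intersection; equivalently, for every $u\in U$ one has $\log|\sigma_{s+j}(u)|=-\tfrac{1}{2t}\sum_{i=1}^s\log|\sigma_i(u)|$. The essential consequence is that the ``all complex places equal'' subspace $W$ is a \emph{rational} subspace of $\Lambda\otimes\Q$, i.e.\ it is spanned by genuine units. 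This is a true arithmetic constraint, since an arbitrary diagonal subspace need not be commensurable with the unit lattice.

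Then I would translate this rationality into Galois-theoretic information. Passing to the Galois closure $L/\Q$ with group $\mathcal{G}=\operatorname{Gal}(L/\Q)$ and $H=\operatorname{Gal}(L/K)$, the archimedean places of $K$ correspond to $H$-orbits on the complex places of $L$, complex conjugation $c\in\mathcal{G}$ records which embeddings are real, and $\mathcal{O}_K^\times\otimes\Q=(\mathcal{O}_L^\times\otimes\Q)^H$ carries the Dirichlet unit representation. The rationality of the diagonal subspace $W$ means that the $\Q$-linear form identifying the $t$ complex coordinates respects the $\mathcal{G}$-module structure, which forces the permutation action of $\mathcal{G}$ on the $t$ complex places of $K$ to admit a nontrivial block decomposition, equivalently the existence of a proper subfield $\Q\subseteq F\subsetneq K$ through which the complex places factor, with $q:=[K:F]\geq 2$. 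I expect this step --- upgrading the metric condition \eqref{igual_modulo}, which a priori lives only on the finite-rank group $U$ (of rank $s<s+t-1$ when $t\geq 2$), to a genuine subfield of $K$ --- to be the main obstacle. It is precisely where the scarcity of units must be overcome using the rigidity of the unit representation rather than soft linear algebra, and it is also where the evenness and the lower bound below are really produced.

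Finally I would convert the block structure into the stated arithmetic by counting degrees and places. The subfield $F$ satisfies $s+2t=[K:\Q]=q\,[F:\Q]$ with $q\geq 2$, and the way the $t$ complex places of $K$ sit over the archimedean places of $F$, together with complex conjugation acting inside $F$, forces $[F:\Q]$ to be even of the form $2(t+m)$ with an integer $m\geq 0$. Substituting yields $s+2t=(2t+2m)q$, that is $s=(2t+2m)q-2t=2\big((t+m)q-t\big)$, so $s$ is even; and since $q\geq 2$ and $m\geq 0$ give $(t+m)q\geq tq\geq 2t$, we obtain $s\geq 2t$, completing the proof.
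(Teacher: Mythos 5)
First, a point of reference: the survey does not actually prove this theorem --- it is quoted from Dubickas \cite{D}, so there is no in-paper argument to measure your proposal against; it has to stand on its own. Your opening moves are fine and match the standard reduction: an LCK metric forces \eqref{igual_modulo}, and since an admissible $U$ has rank $s$, its image under the logarithmic embedding spans exactly the $s$-dimensional space $\{y_1=\cdots=y_t\}\cap P$, which is therefore a rational subspace of the unit lattice. The problem is that everything after that is asserted rather than proved, and you say so yourself: the passage from rationality of the diagonal subspace to ``a nontrivial block decomposition, equivalently a proper subfield $F$ through which the complex places factor'' is exactly the content of Dubickas's theorem, and your sketch offers no mechanism for it. Note also that the Galois group of the closure does not act on the places of $K$ (only on places of the closure, with the places of $K$ appearing as orbits under the subgroup fixing $K$), so even the object whose ``block decomposition'' you invoke is not well defined as stated. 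Dubickas's actual argument works with a single unit $u\in U$ and the real algebraic number $\beta=\sigma_{s+1}(u)\overline{\sigma_{s+1}(u)}$, exploiting that $\beta$ admits $t$ distinct representations as a product of two conjugates of $u$ to control the multiplicities in the Galois orbit of $\beta$; that combinatorial count, not soft linear algebra on the unit lattice, is what produces the factorization $s+2t=(2t+2m)q$.

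The final step has a second, independent gap. Even granting a proper subfield $F\subseteq K$ with $q=[K:F]\geq 2$, the conclusion requires $[F:\Q]$ to be \emph{even} and $\geq 2t$. Your justification --- ``the way the $t$ complex places of $K$ sit over the archimedean places of $F$, together with complex conjugation acting inside $F$'' --- does not deliver this: distinct complex places of $K$ can restrict to the same place of $F$ (so $r_2(F)\geq t$ is not automatic), a complex place of $K$ can lie over a real place of $F$, and $[F:\Q]=r_1(F)+2r_2(F)$ is even only if $r_1(F)$ is, which nothing in your argument forces. So both the existence of the relevant arithmetic structure and the numerology extracted from it are missing; what you have is an accurate description of the shape of the theorem rather than a proof of it.
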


Recently, Istrati and Otiman determined in \cite{IO} the Betti numbers of OT manifolds carrying LCK metrics. Indeed, they show that the Betti numbers $b_j=\dim_\C H_{dR}^j(X,\C)$ of an OT manifold $X$ of type $(s,t)$ admitting some LCK metric are given by 
\begin{gather*}
 b_j=b_{2(s+t)-j}=\binom{s}{j}, \quad \text{if } 0\leq j\leq s,\\
 b_j=0, \quad \text{if } s<j<s+2t. 
\end{gather*}
Regarding the Morse-Novikov cohomology of OT manifolds carrying LCK metrics, it is also shown in \cite{IO} that if $X=X(K,U)$ is an OT manifold of type $(s,t)$ then there exists at most one Lee class of a LCK metric. Furthermore, if $\theta$ is the Lee form of a LCK structure on $X$, then the corresponding twisted Betti numbers $ b_j^\theta= \dim_\C H_{\theta}^j(X,\C)$ are given by:
\[ b_j^\theta= t\binom{s}{j-2}, \qquad \text{for } 0\leq j\leq 2(s+t).\]
In particular, this shows again that OT manifolds do not admit Vaisman metrics, since $H^*_\theta(X)$ does not vanish (see \cite{LLMP}).

Recalling the description of OT manifolds as solvmanifolds $\Gamma\backslash G$, one may wonder if the isomorphisms \eqref{deRham} and \eqref{kill_adapted} still hold for this class of solvmanifolds, even if $G$ is not completely solvable. For the de Rham cohomology, it is shown in \cite{K1} that \eqref{deRham} holds in the case of  OT manifolds of type $(s,1)$. For the Morse-Novikov cohomology, it was shown in \cite{AOT} that the isomorphism \eqref{kill_adapted} holds for a special class of OT manifolds of type $(s,1)$, namely, those satisfying the \textit{Mostow condition}. However, using the description of the Morse-Novikov cohomology of OT manifolds given in \cite{IO}, Istrati and Otiman show that \eqref{kill_adapted} holds for any OT manifold of type $(s,1)$, even if the Mostow condition does not hold.

\

\section{Vaisman solvmanifolds}

In this section we will consider solvmanifolds equipped with invariant Vaisman structures, which will be called simply Vaisman solvmanifolds. Recall that a LCK structure where the Lee form is parallel with respect to the Levi-Civita connection of the Hermitian metric is called Vaisman. Firstly we state some structural results proved in \cite{AO2}, and secondly we use these results in order to exhibit Vaisman solvmanifolds whose underlying complex manifold has holomorphically trivial canonical bundle. 

\subsection{Structure results}

In this section we present some results from \cite{AO2} concerning solvmanifolds equipped with invariant Vaisman structures. Since this structure is invariant, we may work at the 
Lie algebra level, and therefore we will consider a Vaisman structure $(J,\pint)$ on a Lie algebra $\g$, with associated Lee form $\theta$. We may assume that $|\theta|=1$. Therefore, if $A\in\g$ denotes the metric dual to $\theta$, we may decompose $\g$ as $\g=\R A\ltimes \ker\theta$, where $\theta(A)=1$ and $\ker\theta$ is an ideal since $d\theta=0$, which implies $\g'\subseteq \ker\theta$. The Vaisman condition $\nabla \theta=0$ is equivalent to $\nabla A=0$, and due to $d\theta=0$, this is in turn equivalent to $A$ being a Killing vector field (considered as a left invariant vector field on the associated Lie group with left invariant 
metric). Recalling that a left invariant vector field is Killing with respect to a left invariant metric if and only if the corresponding adjoint operator on the Lie algebra is skew-symmetric, we have:

\begin{lema}[\cite{AO}]\label{ad_A-antisim}
Let $(J,\pint)$ be an LCK structure on $\g$ and let $A\in\g$ be as above. Then $(J,\pint)$ is Vaisman if and only if $\ad_A$ is a skew-symmetric endomorphism of $\g$.
\end{lema}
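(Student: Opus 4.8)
The plan is to convert the whole statement into a single computation with the Koszul formula on the Lie algebra. First I would note that, since the Levi-Civita connection is metric, the musical isomorphism induced by $\pint$ commutes with $\nabla$; consequently $\nabla\theta=0$ is equivalent to $\nabla A=0$, where $A$ is the metric dual of $\theta$ (so that $\theta=\la A,\cdot\ra$). Thus the Vaisman hypothesis is used exactly once, to reduce the problem to showing that $A$ is a parallel vector field if and only if $\ad_A$ is skew-symmetric. This reduction isolates everything left-invariant, so that I can work entirely at the Lie algebra level.

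Next I would compute $\nabla_X A$ for an arbitrary $X\in\g$. Since $\pint$ is left-invariant, the inner products of left-invariant vector fields are constant, so the three differentiation terms in the Koszul formula drop out and one is left with
\[ 2\la\nabla_X A,Y\ra=\la[X,A],Y\ra-\la[A,Y],X\ra+\la[Y,X],A\ra \]
for every $Y\in\g$. Here the closedness of $\theta$ enters in a crucial but painless way: because $d\theta(Y,X)=-\theta([Y,X])=-\la A,[Y,X]\ra$ and $d\theta=0$, the last term $\la[Y,X],A\ra$ vanishes identically. Rewriting $[X,A]=-\ad_A X$ and $[A,Y]=\ad_A Y$, the identity collapses to
\[ 2\la\nabla_X A,Y\ra=-\bigl(\la\ad_A X,Y\ra+\la\ad_A Y,X\ra\bigr). \]

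The equivalence is then immediate from this formula: the right-hand side is, up to sign, twice the symmetric part of $\ad_A$ evaluated on $(X,Y)$. It vanishes for all $X,Y\in\g$ precisely when $\la\ad_A X,Y\ra=-\la X,\ad_A Y\ra$ for all $X,Y$, that is, exactly when $\ad_A$ is skew-symmetric. Hence $\nabla_X A=0$ for all $X$ if and only if $\ad_A$ is skew-symmetric, which together with the first step proves the lemma. I do not anticipate any real obstacle; the argument is a direct application of the Koszul formula, and the only points demanding care are the sign bookkeeping and the single use of $d\theta=0$ to discard $\la[Y,X],A\ra$. I would make sure to state the passage from $\nabla\theta=0$ to $\nabla A=0$ explicitly, since that is where the definition of Vaisman is consumed.
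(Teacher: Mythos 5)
Your proposal is correct and follows essentially the same route as the paper: the paper also reduces the Vaisman condition to $\nabla A=0$, then uses $d\theta=0$ to identify this with $A$ being a Killing field, and finally invokes the standard fact that a left-invariant field is Killing iff its adjoint operator is skew-symmetric. Your Koszul-formula computation is simply the explicit verification of that last equivalence (the term $\la [Y,X],A\ra$ you discard via $d\theta=0$ is exactly the antisymmetric part whose vanishing the paper absorbs into the Killing reformulation), so the two arguments coincide in substance.
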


Other properties of Vaisman Lie algebras are given in the next result:

\begin{prop}[\cite{AO2}]\label{vaisman-properties}
Let $(\g,J,\pint)$ be a Vaisman Lie algebra, then
\begin{enumerate}
\item $[A,JA]=0$,
\item $J\circ\ad_A=\ad_A\circ J$,
\item $J\circ\ad_{JA}=\ad_{JA}\circ J$,
\item $\ad_{JA}$ is skew-symmetric.
\end{enumerate}
\end{prop}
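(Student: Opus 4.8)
The plan is to reduce all four identities to two ingredients: the Vaisman hypothesis in the form $\nabla A=0$ (equivalent to $\ad_A$ being skew-symmetric by Lemma~\ref{ad_A-antisim} and the discussion preceding it), and a single explicit formula for the covariant derivative of $J$. Throughout I write $g=\pint$, $\omega(x,y)=\la Jx,y\ra$, normalize $|A|=1$, and use that for left-invariant data $\nabla$ is the algebraic Levi-Civita connection, so that $\ad_A x=[A,x]=\nabla_A x$ (the last equality because $\nabla_x A=0$) and likewise $\nabla_{JA}A=\nabla_A A=0$.

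First I would record the LCK formula for $\nabla J$. Starting from the general almost-Hermitian identity
\[ 2\la(\nabla_x J)y,z\ra=d\omega(x,Jy,Jz)-d\omega(x,y,z)+\la N_J(y,z),Jx\ra \]
and inserting $N_J=0$ (integrability) together with the LCK condition $d\omega=\theta\wedge\omega$, a short computation using the $J$-invariance of $\omega$ and $\la JA,x\ra=-\theta(Jx)$ gives
\[ 2(\nabla_x J)y=\theta(y)\,Jx-\theta(Jy)\,x-\la x,y\ra\,JA-\omega(x,y)\,A. \]
This one formula drives everything; as a safety check one can verify that its right-hand side anticommutes with $J$, as $(\nabla_x J)$ must.

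Next I would specialize. Setting $x=A$ and using $\theta(A)=1$, $\theta(JA)=0$ and $\omega(A,y)=\la JA,y\ra=-\theta(Jy)$, the four terms cancel in pairs to give $\nabla_A J=0$. Since $\ad_A=\nabla_A$, this is precisely (2), and evaluating at $y=A$ gives $[A,JA]=\nabla_A(JA)=(\nabla_A J)A=0$, which is (1). For (4) I instead put $y=A$, obtaining $2\nabla_x(JA)=2(\nabla_x J)A=Jx-\theta(x)JA-\theta(Jx)A$; forming $\la\nabla_x(JA),z\ra+\la\nabla_z(JA),x\ra$, the $\theta\theta$-terms cancel and $\la Jx,z\ra+\la Jz,x\ra=0$, so $JA$ is Killing, i.e. $\ad_{JA}$ is skew-symmetric.

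Finally, for (3) I would bypass the connection and use integrability directly. Expanding $N_J(JA,x)=0$ and substituting $J(JA)=-A$ yields $-[A,Jx]-[JA,x]+J[A,x]-J[JA,Jx]=0$; the first and third terms cancel by (2) (which says $[A,Jx]=J[A,x]$), leaving $[JA,x]=-J[JA,Jx]$, and applying $J$ gives $J[JA,x]=[JA,Jx]$, exactly $J\circ\ad_{JA}=\ad_{JA}\circ J$. The only real obstacle is bookkeeping: fixing the sign conventions (for $\wedge$, for $N_J$, and for $\omega$) so that the $\nabla J$ formula is exactly right and the cancellations in (1), (2), (4) are genuine; once that is settled, each item is a one-line specialization, with (3) the sole step requiring the Nijenhuis identity rather than the metric.
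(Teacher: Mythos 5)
Your argument is correct, and all the key computations check out: with the conventions $\omega(x,y)=\la Jx,y\ra$ and $d\omega=\theta\wedge\omega$, the formula $2(\nabla_x J)y=\theta(y)Jx-\theta(Jy)x-\la x,y\ra JA-\omega(x,y)A$ is the right specialization of the standard Hermitian identity, it does anticommute with $J$, and the substitutions $x=A$ (giving $\nabla_AJ=0$, hence (1) and (2) via $\nabla_A=\ad_A$) and $y=A$ (giving that $JA$ is Killing, hence (4)) work exactly as you claim; deriving (3) from $N_J(JA,\cdot)=0$ together with (2) is also valid. Note that the survey itself gives no proof of this proposition — it is quoted from \cite{AO2} — so there is nothing in the text to compare against line by line; your route through the explicit $\nabla J$ formula plus the Vaisman condition $\nabla A=0$ is a legitimate self-contained derivation and is in the same spirit as the source. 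The one point worth stating explicitly rather than leaving implicit is the identification, for left-invariant data, of the Killing condition $\la\nabla_x(JA),z\ra+\la\nabla_z(JA),x\ra=0$ with the skew-symmetry of $\ad_{JA}$; this is exactly the equivalence already invoked for $A$ in the discussion preceding Lemma \ref{ad_A-antisim}, so it costs one sentence.
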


\

Let us denote $\k:=\text{span}\{A,JA\}^\perp$, so that $\ker \theta= \R JA\oplus \k$ and $\g=\R A\oplus \R JA\oplus \k$. Set $\xi:=JA$, $\eta:=-J\theta|_{\ker\theta}$ and define an 
endomorphism $\phi\in\operatorname{End}(\ker\theta)$ by $\phi(a\xi+x)=Jx$ for $a\in\R$ and $x\in \k$. It follows from general results of I. Vaisman that the quadruple 
$(\pint|_{\ker\theta},\phi,\eta,\xi)$ determines a Sasakian structure on $\ker\theta$ (up to certain constants). 

\medskip

Let us further assume that the Vaisman Lie algebra $\g$ is solvable and unimodular. In any unimodular LCK Lie algebra we have that the vector field $JA$ is in the commutator 
ideal $\g'$ of $\g$ (see \cite{AO2}). Therefore, since $\g$ is solvable, the endomorphism $\ad_{JA}$ is nilpotent. On the other hand, it follows from Proposition \ref{vaisman-properties} that 
$\ad_{JA}$ is skew-symmetric. These two conditions imply that $\ad_{JA}=0$, that is, $JA$ is a \textit{central} element of $\g$. Moreover, it was proved in \cite{AO2} that the 
dimension of the center of $\g$ is at most 2, since it is contained in $\text{span}\{A,JA\}$.

It follows that the Sasakian Lie algebra $\ker \theta$ has a one-dimensional center. Using results from \cite{AFV}, it follows that the $J$-invariant subspace $\k$ admits a Lie 
bracket $[\,,]_\k$ such that $(\k,[\,,]_\k,J|_\k,\pint|_\k)$ is a K\"ahler Lie algebra. Indeed, the Lie bracket $[\,,]_\k$ is defined as follows: for $x,y\in\k$, $[x,y]_\k$ 
denotes the component in $\k$ of $[x,y]$. Moreover, it is easy to show that we actually have
\begin{equation}\label{corchete_k}
 [x,y]= \omega(x,y) JA+[x,y]_\k, \quad x,y\in\k.
\end{equation}
This means that $\ker\theta$ is isomorphic to the one-dimensional central extension of $(\k,[\,,]_\k)$ by the cocycle $\omega|_{\k}$, which coincides with the K\"ahler form
of $(J|_\k,\pint|_\k)$. 

From now on, $\k$ will be considered as a Lie algebra with Lie bracket $[\,,]_\k$. It is easy to verify that $\k$ is also unimodular. Since $\k$ is K\"ahler, it follows from a classical result by Hano \cite{Hano} that the metric $\pint|_\k$ is \textit{flat}. In what follows a Lie algebra equipped with a flat metric will be called a flat Lie algebra.

We note moreover that if we denote $D:=\ad_A$, then $D$ satisfies: $D(\k)\subseteq \k$ and $D|_{\k}$ is a skew-symmetric derivation of $\k$ that commutes with $J|_{\k}$. This procedure can be reversed, and the main result in \cite{AO2} is:

\begin{teo}\label{Vaisman-KF}
There is a one to one correspondence between unimodular solvable Lie algebras equipped with a Vaisman structure and pairs $(\mathfrak k, D)$ where $\mathfrak k$ is a K\"ahler 
flat Lie algebra and $D$ is a skew-symmetric derivation of $\mathfrak k$ which commutes with the complex structure.
\end{teo}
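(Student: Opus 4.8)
The plan is to realize the correspondence as a pair of mutually inverse constructions. The forward assignment has, in effect, already been assembled in the discussion preceding the statement: starting from a unimodular solvable Vaisman Lie algebra $(\g,J,\pint)$ normalized so that $|\theta|=1$, one sets $A$ to be the metric dual of $\theta$ and $\xi:=JA$, uses that $\xi$ is central to write $\ker\theta=\R\xi\oplus\k$ with $\k=\text{span}\{A,\xi\}^\perp$, equips $\k$ with the bracket $[x,y]_\k$ given by the $\k$-component of $[x,y]$, and invokes \cite{AFV} together with Hano's theorem \cite{Hano} to conclude that $(\k,[\,,]_\k,J|_\k,\pint|_\k)$ is a K\"ahler flat Lie algebra. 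Setting $D:=\ad_A|_\k$, Proposition \ref{vaisman-properties} and Lemma \ref{ad_A-antisim} guarantee that $D$ is a skew-symmetric derivation of $\k$ commuting with $J|_\k$. Thus $\g\mapsto(\k,D)$ is well defined, and the first thing I would do is record this carefully, up to the natural notions of isomorphism on both sides.

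The substance of the proof is the reverse construction. Given a pair $(\k,D)$ as in the statement, with K\"ahler form $\omega_\k$, I would first form the one-dimensional central extension $\ker\theta:=\R\xi\oplus\k$ with bracket $[x,y]=\omega_\k(x,y)\,\xi+[x,y]_\k$ and $\xi$ central, exactly as in \eqref{corchete_k}; this is the Sasakian Lie algebra attached to the flat K\"ahler algebra $\k$. I would then set $\g:=\R A\ltimes\ker\theta$, defining $\ad_A|_\k:=D$ and $\ad_A\xi:=0$, extending $\pint$ so that $\{A,\xi\}$ is orthonormal and orthogonal to $\k$, and extending $J$ by $JA=\xi$.

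Four points then need to be checked. First, $\ad_A$ must be a derivation of $\ker\theta$, so that $\g$ is a Lie algebra; evaluated on $\k$-inputs this reduces to $D$ being a derivation of $\k$ together with the identity $\omega_\k(Dx,y)+\omega_\k(x,Dy)=0$, which follows immediately from $D$ being skew-symmetric and commuting with $J$. Second, $J$ is integrable: computing $N_J$, the mixed terms involving $A,\xi$ cancel using $[A,\xi]=0$ and $DJ=JD$, while on $\k$-inputs the $\k$-component is the (vanishing) Nijenhuis tensor of $J|_\k$ and the $\R A\oplus\R\xi$-component vanishes by $J$-invariance of $\omega_\k$. Third, writing $\omega_\g=\theta\wedge\xi^\flat+\omega_\k$, a direct computation yields $d\theta=0$, $d(\xi^\flat)=-\omega_\k$ and $d\omega_\k=0$ as forms on $\g$, whence $d\omega_\g=\theta\wedge\omega_\g$, so the structure is LCK; since $\ad_A$ is skew-symmetric by construction, Lemma \ref{ad_A-antisim} shows it is in fact Vaisman. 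Fourth, $\k$ is unimodular and solvable because it is flat, properties inherited by the central extension and by the semidirect product with $\R$, and $\tr\ad_A=\tr D=0$ since $D$ is skew-symmetric; hence $\g$ is unimodular and solvable.

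It remains to see that the two constructions are mutually inverse, which is routine bookkeeping: the decomposition $\g=\R A\oplus\R\xi\oplus\k$, the central-extension bracket on $\ker\theta$, and the relation $D=\ad_A|_\k$ are reproduced verbatim going each way. I expect the main obstacle to be the reverse direction, and more precisely isolating the exact role of the two hypotheses on $D$: once one sees that skew-symmetry plus commutation with $J$ are precisely what make the cocycle identity and the Nijenhuis identity hold, all the verifications become short, but getting those hypotheses to line up simultaneously with the Lie-algebra, integrability, LCK and Vaisman conditions is the delicate bookkeeping at the heart of the argument.
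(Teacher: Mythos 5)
Your proposal is correct and follows essentially the same route as the paper, which builds the forward map $\g\mapsto(\k,D)$ exactly as you do (centrality of $JA$, the bracket \eqref{corchete_k}, \cite{AFV} and Hano's theorem) and then defers the reverse construction to \cite{AO2} with the remark that ``this procedure can be reversed.'' Your explicit inverse --- central extension of $\k$ by the cocycle $\omega_\k$ followed by the semidirect product with $\R A$ acting by $D$ --- is the intended one, and your verifications (cocycle identity and integrability from skew-symmetry plus $DJ=JD$, the Vaisman condition via Lemma \ref{ad_A-antisim}, unimodularity from flatness of $\k$ and $\tr D=0$) all check out.
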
 

\

We continue our study of unimodular solvable Vaisman Lie algebras. Applying Milnor's result on flat Lie algebras \cite{Mi}, which was later refined in \cite{BDF}, we obtain that 
$\k$ can be decomposed orthogonally as $\k=\z\oplus\h\oplus \k'$, where $\z$ denotes the center of $\k$, $\h$ is an abelian subalgebra and $\k'$ denotes the commutator ideal of 
$\k$, which happens to be abelian and even-dimensional. Moreover, the following conditions hold, where $\nabla$ denotes the Levi-Civita connection on $\k$, which is flat:
\begin{enumerate}
	\item $\ad^\k :\h\to\mathfrak{so(k')}$ is injective,
	\item $\ad^\k_x=\nabla_x$ for any $x\in\z\oplus\h$,
	\item $\nabla_x=0$ if and only if $x\in\z\oplus\k'$.
\end{enumerate}
Furthermore, since $\k$ is K\"ahler, we have
\begin{enumerate}
 \item[(4)] $J(\z\oplus\h)=\z\oplus\h \quad \text{and} \quad J(\k')=\k'$,
 \item[(5)] $[H,Jx]_\k=J[H,x]_\k$ for any $H\in\h,\,x\in\k'$, i.e. $\ad^\k :\h\to\mathfrak{u(k')}$.
\end{enumerate}

\medskip

It was proved in \cite{AO2} that any unitary derivation of a K\"ahler flat Lie algebra acts as the zero endomorphism on $\h + J\h$, therefore we obtain that $[A,H]=[A,JH]=0$ for 
all $H\in\h$. As a consequence of these results, we were able to prove that the class of unimodular solvable Lie algebras that admit Vaisman structures is quite restricted:

\begin{teo}[\cite{AO2}]\label{Vaisman-imag.puras}
If the unimodular solvable Lie algebra $\g$ admits a Vaisman structure, then the eigenvalues of the operators $\ad_x$ with $x\in\g$ are all imaginary (some of them are 0).
\end{teo}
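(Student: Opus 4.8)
The plan is to compute the eigenvalues of $\ad_x$ directly from the structural description of $\g$ obtained above, and then reduce everything to a skew-symmetry statement on $\k$. Recall $\g=\R A\oplus\R JA\oplus\k$ with $JA$ central, $\ad_A=D$ preserving $\k$ and $D|_\k$ skew-symmetric, and the bracket on $\k$ given by \eqref{corchete_k}. Writing an arbitrary $x\in\g$ as $x=aA+b\,JA+w$ with $a,b\in\R$, $w\in\k$, I would first use $\ad_{JA}=0$ to get $\ad_x=aD+\ad_w$. A short computation with $[w,A]=-D(w)$, the centrality of $JA$, and \eqref{corchete_k} shows that, in the ordered decomposition $(A,JA,\k)$, the operator $\ad_x$ has the block form
\[
\ad_x=\begin{pmatrix} 0 & 0 & 0 \\ 0 & 0 & \ell \\ -D(w) & 0 & M \end{pmatrix},
\]
where $\ell(u)=\omega(w,u)$ and $M:=aD|_\k+\ad^\k_w\in\operatorname{End}(\k)$, with $\ad^\k$ the adjoint operator of the bracket $[\,,]_\k$. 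Expanding the characteristic polynomial along the $A$- and $JA$-axes yields $\det(\lambda I-\ad_x)=\lambda^2\det(\lambda I-M)$. Hence the spectrum of $\ad_x$ consists of $0$ (contributed by the $A$ and $JA$ directions) together with the spectrum of $M$, and the theorem reduces to proving that $\operatorname{spec}(M)\subset i\R$.

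For the reduction on $\k$ I would invoke Milnor's decomposition $\k=\z\oplus\h\oplus\k'$ of the flat Kähler Lie algebra. Write $\mathfrak b:=\z\oplus\h$ and recall that $\mathfrak b$ is abelian, $\k'$ is an abelian ideal, and $\ad^\k_{b}$ is skew-symmetric for every $b\in\mathfrak b$. Split $w=w'+w''$ with $w'\in\mathfrak b$ and $w''\in\k'$. The key observation is that $\k'$ is invariant under $M$: the commutator ideal $\k'$ is preserved by the derivation $D$, and for $u\in\k'$ one has $\ad^\k_w(u)=\ad^\k_{w'}(u)$ because $\k'$ is abelian, so $M|_{\k'}=aD|_{\k'}+\ad^\k_{w'}|_{\k'}$ is a sum of skew-symmetric operators and is therefore skew-symmetric. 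Thus $M$ is block-triangular for the orthogonal splitting $\k=\mathfrak b\oplus\k'$, and $\operatorname{spec}(M)=\operatorname{spec}(M|_{\k'})\cup\operatorname{spec}(\overline M)$, where $\overline M$ is the induced operator on $\k/\k'\cong\mathfrak b$. The first set lies in $i\R$ because $M|_{\k'}$ is skew-symmetric. For the second, note that $\ad^\k_{w''}$ sends $\mathfrak b$ into $\k'$, so $\overline M$ is the compression $P_{\mathfrak b}\,(aD)|_{\mathfrak b}$ of a skew-symmetric operator, which is again skew-symmetric; hence it too has purely imaginary spectrum. Therefore $\operatorname{spec}(M)\subset i\R$, and the theorem follows.

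The hard part, and the real content of the argument, is the summand $\ad^\k_{w''}$ with $w''\in\k'$. This operator is nilpotent but does not commute with the skew-symmetric part $aD|_\k+\ad^\k_{w'}$, so one cannot split the spectrum of $M$ as the sum of the spectra of a skew-symmetric and a nilpotent operator. The way around this is exactly the triangular structure above: since $\ad^\k_{w''}$ annihilates $\k'$ and maps $\mathfrak b$ into $\k'$, it is strictly lower-triangular with respect to $\k=\mathfrak b\oplus\k'$ and so contributes nothing to either diagonal block. Both diagonal blocks are then honestly skew-symmetric, which forces $\operatorname{spec}(M)\subset i\R$. I would expect the only delicate bookkeeping to be checking the invariance statements $D(\k')\subseteq\k'$, $\ad^\k_{w'}(\k')\subseteq\k'$ and $\ad^\k_{w''}(\mathfrak b)\subseteq\k'$, all of which are immediate from $\k'$ being an abelian commutator ideal and $\mathfrak b$ being abelian.
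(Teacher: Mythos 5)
Your proof is correct. Note that the survey itself does not reproduce a proof of this theorem (it only cites \cite{AO2} and lists the structural ingredients beforehand), but your argument is precisely the natural completion of that setup: the block form of $\ad_x$ relative to $\g=\R A\oplus\R JA\oplus\k$, the reduction to $M=aD|_\k+\ad^\k_w$, and the block-triangularity of $M$ with respect to $(\z\oplus\h)\oplus\k'$ with skew-symmetric diagonal blocks all check out against the stated properties of the Milnor--[BDF] decomposition, so I consider this essentially the intended argument.
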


\smallskip

As a consequence of Theorem \ref{Vaisman-imag.puras}, together with results in \cite{S2} and Theorem \ref{nilpotent}, we reobtain the following result, proved by Sawai in \cite{S3}. Recall that a completely solvable Lie group $G$ is a solvable Lie group such that for all $x$ in its Lie algebra the operators $\ad_x$ have only real eigenvalues.

\begin{cor}
Let $G$ be a simply connected completely solvable Lie group and $\Gamma\subset G$ a lattice. If the solvmanifold $\Gamma\backslash G$ admits a Vaisman structure $(J,g)$ such that the complex structure $J$ is induced by a left invariant complex structure on $G$, then $G=H_{2n+1}\times\R$, where $H_{2n+1}$ denotes the $(2n+1)$-dimensional Heisenberg Lie group.
\end{cor}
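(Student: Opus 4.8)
The plan is to push everything down to the Lie algebra level, use the eigenvalue restriction of Theorem \ref{Vaisman-imag.puras} to force nilpotency, and then read off the group from the nilpotent classification in Theorem \ref{nilpotent}. First I would record the two structural facts we get for free: since $G$ carries a lattice it is unimodular by \cite{Mi}, and it is solvable by hypothesis, so $\g$ is a unimodular solvable Lie algebra carrying the invariant complex structure $J$. The given Vaisman metric $g$, however, is not assumed left invariant, so the genuine first task is to produce an \emph{invariant} Vaisman structure on $\g$ compatible with $J$. Here the hypothesis of complete solvability is doing essential work: via the symmetrization techniques of \cite{S2} (which rely on the existence of an invariant volume and, ultimately, on Hattori's identification of the cohomology of $\Gamma\backslash G$ with invariant forms) one averages $g$ to obtain a left invariant Hermitian metric $\pint$ for which $(J,\pint)$ is again Vaisman. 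I expect this symmetrization to be the main obstacle, because the Vaisman condition $\nabla\theta=0$ (equivalently, by Lemma \ref{ad_A-antisim}, skew-symmetry of $\ad_A$) is rigid and one must verify that it is not destroyed by the averaging; this is precisely the ingredient borrowed from \cite{S2}.

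Once an invariant Vaisman structure on the unimodular solvable Lie algebra $\g$ is in place, Theorem \ref{Vaisman-imag.puras} applies directly and forces every eigenvalue of every operator $\ad_x$, $x\in\g$, to be purely imaginary. On the other hand, the standing assumption that $G$ is completely solvable means, by definition, that every such eigenvalue is real. The only number that is simultaneously real and purely imaginary is $0$, so each $\ad_x$ is nilpotent, and Engel's theorem then shows that $\g$ itself is a nilpotent Lie algebra. Hence $G$ is a simply connected nilpotent Lie group and $\Gamma\backslash G$ is a nilmanifold which still carries the original Vaisman structure $(J,g)$.

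At this point I would invoke Theorem \ref{nilpotent}, whose hypotheses (a nilmanifold with a not necessarily invariant Vaisman structure) are exactly met: $\Gamma\backslash G$ is diffeomorphic to $\Gamma'\backslash H_{2n+1}\times S^1$ for some lattice $\Gamma'\subset H_{2n+1}$, and the latter is the nilmanifold of the simply connected nilpotent Lie group $H_{2n+1}\times\R$ with lattice $\Gamma'\times\Z$. Since nilmanifolds are aspherical, a diffeomorphism induces an isomorphism of fundamental groups, so $\Gamma\cong\pi_1(\Gamma\backslash G)$ is isomorphic to a lattice in $H_{2n+1}\times\R$. By the rigidity of lattices in simply connected nilpotent Lie groups (Malcev, cf. \cite{Ma}), such a group is determined up to isomorphism by any of its lattices, whence $G\cong H_{2n+1}\times\R$, which is the desired conclusion.

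In summary, the argument runs: unimodularity and symmetrization give an invariant Vaisman structure; Theorem \ref{Vaisman-imag.puras} combined with complete solvability kills all eigenvalues and yields nilpotency via Engel; and Theorem \ref{nilpotent} together with Malcev rigidity upgrades the resulting nilmanifold to the group isomorphism $G\cong H_{2n+1}\times\R$. The only step requiring real work beyond assembling cited results is the passage from the non-invariant Vaisman metric to an invariant one, which is why \cite{S2} is indispensable in the chain.
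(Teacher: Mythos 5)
Your proposal is correct and follows exactly the route the paper indicates: it uses the results of \cite{S2} to replace the given Vaisman metric by a left invariant one, combines Theorem \ref{Vaisman-imag.puras} with complete solvability to force all $\ad_x$ to be nilpotent (hence $\g$ nilpotent by Engel), and then applies Theorem \ref{nilpotent} together with Malcev rigidity of lattices to identify $G$ with $H_{2n+1}\times\R$. You also correctly single out the averaging step as the only place where nontrivial input beyond the quoted theorems is needed, which matches the paper's citation of \cite{S2} for precisely that purpose.
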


\medskip

\begin{rem}
As stated above, the center of any unimodular solvable Lie algebra which admits a Vaisman structure is non trivial and, moreover, its dimension is at most 2. If we consider unimodular solvable Lie algebras with general LCK structures, there are examples with trivial center, but in all the examples known so far the dimension of the center is still bounded by 2. 
\end{rem}

\begin{rem}
In \cite{MP} the authors construct examples of compact Vaisman manifolds which are obtained as the total spaces of a principal $S^1$-bundle over coK\"ahler manifolds. They also show that these Vaisman manifolds are diffeomorphic to solvmanifolds. 
\end{rem}

\begin{rem}
Yet another description of unimodular Vaisman Lie algebras was given in \cite{AHK}. Indeed, in that article it is shown that applying suitable modifications to any unimodular Vaisman Lie algebra one obtains one of the following Lie algebras: $\h_{2n+1}\times\R$, $\mathfrak{su}(2)\times \R$ or $\mathfrak{sl}(2,\R)\times\R$ (see \cite{AHK} for the relevant definitions).
\end{rem}

\

\subsection{Canonical bundle of Vaisman solvmanifolds}

In this section we present original results concerning the canonical bundle of Vaisman solvmanifolds. Indeed, using the description of unimodular solvable Lie algebras given in the previous section, we exhibit in Theorem \ref{canonical} necessary and sufficient conditions for the existence of an \textit{invariant} nowhere vanishing holomorphic $(n,0)$-form on such a solvmanifold, where $n$ is half its real dimension. As a consequence, in this case the canonical bundle of the complex solvmanifold is holomorphically trivial.

We recall first that the complex manifold underlying the Vaisman nilmanifold $\Gamma\backslash H_{2n+1}\times S^1$, where $\Gamma$ is a lattice in the Heisenberg group $H_{2n+1}$, 
has holomorphically trivial canonical bundle. Indeed, it was shown in \cite{BDV} and \cite{CG} that any nilmanifold equipped with an invariant complex structure has trivial canonical bundle. A proof follows by  using a special basis of left invariant $(1,0)$-forms on the nilmanifold determined by S. Salamon in \cite{Sal}.

On the other hand, the main examples of Vaisman manifolds, that is, the Hopf manifolds $S^1\times S^{2n-1}$, have non trivial canonical bundle since its Kodaira dimension is $-\infty$. This follows from the fact that this Vaisman structure is toric (as shown in \cite[Example 4.8]{Pi}), together with the fact that any compact toric LCK manifold has Kodaira dimension $-\infty$ (\cite[Theorem 6.1]{MMP}). 

In what follows we will show that some Vaisman solvmanifolds have holomorphically trivial canonical bundle. Indeed, using the description of Vaisman Lie algebras given in the previous section, we will determine which of these Lie algebras admit a holomorphic $(n,0)$-form, where $2n$ is the real dimension of the Lie algebra. This holomorphic form will induce a nowhere vanishing section of the canonical bundle of any associated solvmanifold.

\medskip

Let $M=\Gamma\backslash G$ be a $2n$-dimensional solvmanifold equipped with an invariant Vaisman structure $(J,g)$. If $\g$ denotes the Lie algebra of $G$, which is unimodular and solvable, then $(J,g)$ determines a Vaisman structure on $\g$ with Lee form $\theta\in\g^\ast$. If $A\in\g$ denotes the metric dual to $\theta$, then we know that $\g=\R A\oplus \ker\theta$, $JA$ is a central element of $\g$, and the $J$-invariant subspace $\k=\text{span}\{A,JA\}^\perp\subset \ker\theta$ admits a Lie bracket 
that turns it into a K\"ahler Lie algebra with flat metric. Moreover, $\k$ can be decomposed orthogonally as $\k= \z\oplus \h\oplus \k'$ as before, where $\z\oplus \h$ and $\k'$ are $J$-invariant subspaces and $\ad_A$ vanishes on $\h+ J\h$.

Recall that $\ad_A$ commutes with $J$ and is skew-symmetric, so that $\ad_A\in\u(\g)$; we also have that $\ad_H|_{\k'}\in\u(\k')$.

\begin{teo}\label{canonical}
With notation as above, the $2n$-dimensional Vaisman solvmanifold $M=\Gamma\backslash G$ admits an invariant nowhere vanishing holomorphic $(n,0)$-form if and only if
\[ \ad_A\in\mathfrak{su}(\g), \quad \text{ and } \quad \ad_H|_{\k'}\in\mathfrak{su}(\k') \]
for all $H\in\h$. In particular, $M$ has holomorphically trivial canonical bundle.
\end{teo}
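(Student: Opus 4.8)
The plan is to reduce the existence of an invariant nowhere vanishing holomorphic $(n,0)$-form to the closedness of the (essentially unique) invariant $(n,0)$-form, and then to compute its differential block by block using the structure of $\g$ recalled above. Since $\dim_\C\g=n$, the space $\alt^{n,0}\g^\ast$ is one-dimensional; fix a generator $\Psi$. A nonzero invariant $(n,0)$-form is automatically nowhere vanishing, and since $\partial\Psi\in\alt^{n+1,0}\g^\ast=0$ for dimensional reasons, $d\Psi=\bar\partial\Psi\in\alt^{n,1}\g^\ast$, so $\Psi$ is holomorphic if and only if $d\Psi=0$. Hence $M$ carries an invariant nowhere vanishing holomorphic $(n,0)$-form exactly when $\Psi$ is closed, and such a $\Psi$ then trivializes the canonical bundle $K_M$, which yields the final assertion.

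To analyze $d\Psi$ I would pick a complex basis of $\g^{1,0}$ adapted to the orthogonal $J$-invariant decomposition $\g=\langle A,JA\rangle\oplus(\z\oplus\h)\oplus\k'$: namely $W_0=A-iJA$, a basis $\{W_i\}$ of $(\z\oplus\h)^{1,0}$ and a basis $\{V_a\}$ of $(\k')^{1,0}$, with dual $(1,0)$-forms $\zeta^0,\eta^i,\nu^a$ whose wedge is $\Psi$. A standard computation with the structure equations gives $\bar\partial\Psi=\pm\,\Psi\wedge\beta$, where $\beta=\sum_l T(\bar Z_l)\,\bar\zeta^l\in\alt^{0,1}\g^\ast$ and $T(W):=\tr\big(\mathrm{pr}^{1,0}\circ\ad_W|_{\g^{1,0}}\big)$ is the $\C$-linear trace functional extracting the diagonal $(1,1)$-coefficients $\sum_j\zeta^j([\,\cdot\,,Z_j])$. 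Since the forms $\Psi\wedge\bar\zeta^l$ are linearly independent, $d\Psi=0$ is equivalent to the vanishing of $T$ on $\g^{0,1}$, and it suffices to evaluate $T$ on the three blocks.

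The core of the argument is this evaluation, using Proposition \ref{vaisman-properties} and \eqref{corchete_k}. Because $JA$ is central, $\ad_{\bar W_0}=\ad_A$ preserves $\g^{1,0}$, giving $T(\bar W_0)=\tr_\C(\ad_A)$. For $X\in\z\oplus\h$ one checks, using that $\z\oplus\h$ is abelian and $D=\ad_A$ preserves both the center and $\k'$: for $z\in\z$ one has $\ad_z A=-D(z)\in\z$ and $\ad_z$ maps $\z\oplus\h$ and $\k'$ into $\R JA$; for $H\in\h$ one has $\ad_H A=-D(H)=0$ (as $D$ vanishes on $\h+J\h$), $\ad_H(\z\oplus\h)\subseteq\R JA$, and $\ad_H v=\omega(H,v)JA+\ad^\k_H v$. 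In every case the $\R JA$-valued images have $(1,0)$-part along $W_0$ and so are annihilated by the pairings $\eta^i(\ad_X W_i)$, $\nu^a(\ad_X V_a)$, while the $\zeta^0$-slot $\zeta^0(\ad_X W_0)$ also vanishes; the only surviving term is $\sum_a\nu^a(\ad^\k_{\mathrm{pr}_\h X}V_a)$, so $T(X)=\tr_\C(\ad^\k_{\mathrm{pr}_\h X}|_{\k'})$. Finally, for $v\in\k'$ every bracket $[v,\cdot]$ lands in $\R JA\oplus\k'$, with the $\k'$-part $-D(v)$ appearing only in the $W_0$-slot and hence killed, so $T|_{\k'}=0$ and the $\k'$-directions impose no condition.

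It remains to assemble these computations into the two stated conditions, and the step I expect to be the main obstacle is converting the requirement $T|_{(\z\oplus\h)^{0,1}}=0$ into a statement about all real $H\in\h$ (a naive dimension count on $(\z\oplus\h)^{0,1}$ need not surject onto $\h$). The key is that $\ad_A\in\u(\g)$ and $\ad^\k_H|_{\k'}\in\u(\k')$ already hold, so both traces are purely imaginary: writing $\g^{0,1}=\{Y+iJY\}$ and $f(Y):=\tr_\C(\ad^\k_{\mathrm{pr}_\h Y}|_{\k'})\in i\R$, the vanishing of $T(Y+iJY)=f(Y)+i\,f(JY)$ forces its imaginary part $f(Y)$ and its real part $i\,f(JY)$ to vanish separately, whence $f\equiv 0$ and thus $\tr_\C(\ad^\k_H|_{\k'})=0$ for every $H\in\h$. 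Combined with $T(\bar W_0)=\tr_\C(\ad_A)=0$ and the fact that $\ad_A\in\u(\g)$, $\ad^\k_H|_{\k'}\in\u(\k')$, these are precisely the conditions $\ad_A\in\mathfrak{su}(\g)$ and $\ad_H|_{\k'}\in\mathfrak{su}(\k')$ for all $H\in\h$. This proves the equivalence, and the resulting closed nowhere vanishing $\Psi$ trivializes $K_M$.
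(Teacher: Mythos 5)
Your proposal is correct and follows essentially the same route as the paper: both reduce the statement to the closedness of the (unique up to scale) invariant $(n,0)$-form and, using the structure results of \cite{AO2} ($JA$ central, $\ad_A\in\u(\g)$, the decomposition $\k=\z\oplus\h\oplus\k'$ with $\ad_A$ vanishing on $\h+J\h$), identify the obstruction with the vanishing of $\tr_{\C}(\ad_A)$ and $\tr_{\C}(\ad_H|_{\k'})$. The only difference is presentational — you package the computation as $\bar\partial\Psi=\pm\,\Psi\wedge\beta$ with $\beta$ a trace $(0,1)$-form, whereas the paper writes out explicit structure constants $a_i,c_i,\lambda_i^j$ and computes $d\eta$ directly — and your purely-imaginary-trace argument cleanly justifies the step (left implicit in the paper) from vanishing on $(\z\oplus\h)^{0,1}$ to vanishing for every real $H\in\h$.
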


\begin{proof}
As $\ad_A=0$ on $\h+J\h$, we will decompose orthogonally the $J$-invariant subspace $\z\oplus \h$ as follows: $\z\oplus \h=(\z\cap J\z)\oplus (\h+J\h)$.
According to \cite{AO2}, there exists an orthonormal basis $\{e_1,\ldots,e_{2m}\}$ of $\k'$ such that $Je_{2i-1}=e_{2i}$ and for $A\in\g$ and any $H\in\h+J\h$ we have
\small
\[ \ad_A|_{\k'}=\left(\begin{array}{ccccc}       
0  & -a_1 &        &               &              \\
a_1 & 0          &        &               &              \\
&               & \ddots &               &              \\
&               &        &         0     & -a_m\\
&               &        &  a_m &     0        \\
\end{array}\right), \quad
\ad_H|_{\k'}=\left(\begin{array}{ccccc}       
	0  & -\lambda_1(H) &        &               &              \\
	\lambda_1(H) & 0          &        &               &              \\
	&               & \ddots &               &              \\
	&               &        &         0     & -\lambda_m(H)\\
	&               &        &  \lambda_m(H) &     0        \\
\end{array}\right) 
\]
\normalsize
for some $a_i\in\R$ and $\lambda_i\in (\h+J\h)^*$, $i=1,\ldots,m$.

Since $\ad_A|_{\z\cap J\z}$ is unitary, there exists an orthonormal basis $\{z_1,\ldots,z_{2r}\}$ of $\z\cap J\z$ such that $Jz_{2i-1}=z_{2i}$ and 
\[ \ad_A|_{\z\cap J\z}=\left(\begin{array}{ccccc}       
0  & -c_1 &        &               &              \\
c_1 & 0          &        &               &              \\
&               & \ddots &               &              \\
&               &        &         0     & -c_r\\
&               &        &  c_r &     0        \\
\end{array}\right), 
\]
for some $c_i\in\R$. Finally, we may choose an orthonormal basis $\{H_1,\ldots,H_{2s}\}$ of $\h+J\h$ such that $JH_{2i-1}=H_{2i}$, and we denote $\lambda_i^j=\lambda_i(H_j)$. 
To sum up, setting $B:=JA$ and taking \eqref{corchete_k} into account, we may describe the Lie bracket of $\g$ as follows:
\begin{gather*}
 [z_{2i-1},z_{2i}]=B, \quad [H_{2j-1},H_{2j}]=B, \quad [e_{2k-1},e_{2k}]=B, \\
 [A,z_{2i-1}]=c_i z_{2i}, \quad [A, z_{2i}]=-c_i z_{2i-1},\\
 [A,e_{2k-1}]=a_k e_{2k}, \quad [A, e_{2k}]=-a_k e_{2k-1},\\
 [H_{j},e_{2k-1}]=\lambda_k^{j} e_{2k}, \quad  [H_{j},e_{2k}]=-\lambda_k^{j} e_{2k-1},
\end{gather*}
for $i=1,\ldots,r$, $j=1,\ldots,2s$ and $k=1,\ldots,m$.
If $\{\alpha, \beta, z^1,\ldots,z^{2r}, H^1,\ldots,H^{2s}, e^1,\ldots,e^{2m}\}$ denotes the dual basis of $\{A, B\}\cup\{z_i\}\cup\{ H_j\}\cup\{e_k\}$, then the Lie bracket can 
be encoded in terms of their differentials as follows:
\begin{align*}
 d\alpha & =0,\\
 d\beta & = -\sum_{i=1}^r z^{2i-1}\wedge z^{2i} -\sum_{j=1}^s H^{2j-1}\wedge H^{2j}-\sum_{k=1}^m e^{2k-1}\wedge e^{2k}  ,\\
 dz^{2i-1} & = c_i\alpha\wedge z^{2i},\\
 dz^{2i} & = -c_i\alpha\wedge z^{2i-1},\\
 dH^{2j-1} & = 0,\\
 dH^{2j} & = 0,\\
 de^{2k-1} & = \gamma_k\wedge e^{2k},\\
 de^{2k} & = -\gamma_k\wedge e^{2k-1}, 
 \end{align*}
where $\gamma_k = a_k\alpha + \sum_{l=1}^s\lambda_k^l H^l$ for any $k=1,\ldots,m$. Let us define the complex $1$-forms
\begin{align*} 
 \mu & = \alpha + i\beta  ,\\
 \xi_j & = z^{2j-1} + iz^{2j},\\
 \delta_j & = H^{2j-1} + iH^{2j},\\
 \omega_j & = e^{2j-1} + ie^{2j}.
 \end{align*}
These 1-forms are all of type $(1,0)$, so that 
\[ \eta:=\mu\wedge\xi_1\wedge\ldots\wedge\xi_r\wedge\delta_1\wedge\ldots\wedge\delta_s\wedge\omega_1\wedge\ldots\wedge\omega_m \]
is a $(n,0)$-form which generates $\alt^{(n,0)}\g^*$, where $n=1+r+s+m$. 

Therefore, we only have to check when $\eta$ is holomorphic or, equivalently, $\eta$ is closed. It can be easily verified that $d\eta=0$ if and only if 
\begin{equation}\label{sumas}
\sum_{j=1}^r c_j + \sum_{j=1}^m a_j=0 \quad \text{and} \quad \sum_{j=1}^m \lambda_j^k=0 \quad  \forall k=1,\ldots,s.
\end{equation}
It is easy to see that equations \eqref{sumas} hold if and only if $\ad_A\in \mathfrak{su}(\g)$ and $\ad_H|_{\k'}\in \mathfrak{su}(\k')$ for all $H\in\h$ (using that $\h+J\h \subseteq \h\oplus\z$), and this completes the proof.
\end{proof}

\medskip

\begin{rem}
When the $2n$-dimensional unimodular solvable Vaisman Lie algebra $\g=Lie (G)$ admits a non zero holomorphic $(n,0)$-form, then any solvmanifold $\Gamma\backslash G$ has holomorphically trivial canonical bundle, that is, this property is independent of the choice of lattice. 
\end{rem}

\

\begin{ejemplo}\label{fibrado}
It is well known that the only compact complex surfaces with holomorphically trivial canonical bundle are the $K3$ surfaces and the primary Kodaira surfaces. Moreover, the former do not admit Vaisman metrics and are not solvmanifolds, whereas the latter do admit Vaisman metrics and are in fact nilmanifolds. 

Next, using Theorem \ref{canonical}, we will exhibit examples of Vaisman solvmanifolds with holomorphically trivial canonical bundle in any dimension greater than or equal to $6$, which are not nilmanifolds. Indeed, in \cite{AO2} we determined all Vaisman Lie algebras $\g$ such that the associated K\"ahler flat Lie algebra $\k$ is abelian, $\k=\R^{2n-2}$. The Vaisman Lie algebra $\g$ can be 
decomposed as $\g=\R A \ltimes_D \h_{2n-1}$ where the adjoint action of $\R$ on $\h_{2n-1}$ is given by
\begin{equation}\label{D} 
D=\left(\begin{array}{cccccc}       
0 &      &      &        &      &     \\
  &   0  & -a_1 &        &      &     \\
  &  a_1 & 0    &        &      &     \\
  &      &      & \ddots &      &     \\
  &      &      &        &   0  & -a_{n-1}\\
  &      &      &        &  a_{n-1} &  0  \\
\end{array}\right), \end{equation}
in a basis $\{B=JA,e_1,\dots,e_{2n-2}\}$ of $\h_{2n-1}$ such that $[e_{2i-1},e_{2i}]=B$ and $Je_{2i-1}=e_{2i}$, for some $a_i\in\R$ (and not all of them zero, otherwise we obtain 
a nilpotent Lie algebra). We proved that whenever $a_i\in\Q$ the corresponding simply connected Lie group admits non-nilpotent lattices. It follows from Theorem \ref{canonical} that if the constants $\{a_i\}$ satisfy $\sum_{i=1}^{n-1} a_i=0$, then the associated solvmanifolds have holomorphically trivial canonical bundle and they are not diffeomorphic to a nilmanifold. For instance, when $n=3$, $a_1=1$ and $a_2=-1$, we reobtain the Lie algebra $\g_6=A_{6,82}^{0,1,1}$ from \cite{FOU}, where the classification of $6$-dimensional solvable Lie algebras equipped with a non-zero holomorphic $(3,0)$-form was given.
\end{ejemplo}

\begin{rem}
If in the construction given in Example \ref{fibrado} we take $n=1$ and $a_1=1$, the associated Lie group admits a lattice such that the corresponding $4$-dimensional solvmanifold is a secondary Kodaira surface. Therefore this secondary Kodaira surface is a Vaisman solvmanifold and it is well known that its canonical bundle is not holomorphically trivial (see for instance \cite{BHPV}). 
\end{rem}

\begin{rem}
We recall that, according to \cite{To}, a $2n$-dimensional compact complex manifold has holomorphically trivial canonical bundle if and only if it admits a Hermitian metric whose Chern connection has (unrestricted) holonomy group contained in $SU(n)$.
\end{rem}

\

\section{ Locally conformally symplectic solvmanifolds}

A natural generalization of LCK manifolds are the locally conformally symplectic (LCS) manifolds, that is, a manifold $M$ with a non degenerate $2$-form $\omega$ such that there exists an open cover $\{U_i\}$ and smooth functions $f_i$ on $U_i$ such that 
\[\omega_i=\exp(-f_i)\omega\] 
is a symplectic form on $U_i$. This condition is equivalent to requiring that
\begin{equation}\label{lcs}
d\omega=\theta\wedge\omega
\end{equation} 
for some closed $1$-form $\theta$, called the Lee form. 
Moreover, $M$ is called globally conformally symplectic (GCS) if there exist  a $C^{\infty}$ function, $f:M\to\R$, such that $\exp(-f)\omega$ is a symplectic form. Equivalently, $M$ is a GCS 
manifold if there exists a exact $1$-form $\theta$ globally defined on $M$ such that 
$d\omega=\theta\wedge\omega$.   
The pair $(\omega, \theta)$ will be called a LCS structure on $M$. Note that if $\theta=0$ then $(M,\omega)$ is a symplectic manifold. The unique vector field $V$ on $M$ satisfying $i_V\omega=\theta$ is called the Lee vector field of the LCS structure.

These manifolds were considered by Lee in \cite{L} and they have been thoroughly studied by Vaisman in \cite{V}. Some recent results can be found in \cite{Ba,Baz1,BM1,Ha,HR,LV}, among others. LCS manifolds play an important role in mathematics and in physics as well, for example in Hamiltonian mechanics, since they provide a generalization of the usual description of the phase space in terms of symplectic geometry (see \cite{V}). 

We recall a definition due to Vaisman (\cite{V}) concerning different kinds of LCS structures. Given a LCS structure $(\omega, \theta)$ on $M$, a vector field $X$ is called an infinitesimal automorphism of $(\omega,\theta)$ if 
$\textrm{L}_X\omega=0$, where $\textrm{L}$ denotes the Lie derivative. It follows that $\textrm{L}_X\theta=0$, as a consequence, $\theta(X)$ is a constant function on $M$. The subspace $\mathfrak{X}_\omega(M)=\{X\in\mathfrak{X}(M): \textrm{L}_X\omega=0\}$ is a Lie subalgebra of $\mathfrak{X}(M)$, and the map $\theta : \mathfrak{X}_\omega(M) \to \R$ is a well defined Lie algebra morphism called the Lee morphism. 
If there exists an infinitesimal automorphism $X$ such that $\theta(X)\neq 0$, the LCS structure $(\omega,\theta)$ is said to be of {\em the first kind}, and it is of {\em the second kind} otherwise. Much more is known about manifolds carrying LCS structures of the first kind. For instance, in \cite{V} it was shown the existence of distinguished foliations on such manifolds, while in \cite{Ba} it was shown that a compact manifold equipped with a LCS structure of the first kind fibers over the circle and that the fiber inherits a contact structure. It is well known that the LCS structure underlying any Vaisman manifold is of the first kind (see for instance \cite{CDMY1}).

\medskip 

The relation between LCK and LCS manifolds is analogous to the K\"ahler vs symplectic manifolds.  In \cite{OV} the following problem was formulated: 

\smallskip

\noindent ``Give examples of compact locally conformally symplectic manifolds which admit no locally conformally K\"ahler metric.''

\smallskip

The first example was given in \cite{BK}; this example is a product $M\times S^1$, where $M$ is certain $3$-dimensional compact contact manifold. More recently, in \cite{BM1} another example was constructed, this example is a $4$-dimensional nilmanifold and it is not a product of a $3$-dimensional compact manifold with $S^1$. This motivates the study of nilmanifolds or, more generally, solvmanifolds, equipped with invariant LCS structures.

\medskip

Given a Lie group $G$, a LCS structure on $G$ is called left invariant if $\omega$ is left invariant, and this easily implies that $\theta$ is also left invariant using condition \eqref{lcs}. This fact allows us to define a LCS structure on a Lie algebra. We say that a Lie algebra $\g$ admits a \textit{locally conformally symplectic} (LCS) structure if there exist $\omega\in\alt^2\g^*$ and $\theta \in \g^*$, with $\omega$ non degenerate and $\theta$ closed, such that \eqref{lcs} is satisfied.

LCS structures on Lie algebras can also be distinguished in two different kinds, just as LCS structures on manifolds. Indeed, given a LCS structure $(\omega,\theta)$ on a Lie algebra $\g$, let us denote by $\g_\omega$ the set of infinitesimal automorphisms of the LCS structure, that is, 
\begin{equation}\label{autom}
\g_\omega = \{x\in\g: \textrm{L}_x\omega=0\} = \{x\in\g: \omega([x,y],z)+\omega(y,[x,z])=0 \; \text{for all} \; y,z\in\g\}.
\end{equation}
Note that $\g_\omega \subset \g$ is a Lie subalgebra, and the restriction of $\theta$ to $\g_\omega$ is a Lie algebra morphism called the Lee morphism. The LCS structure $ (\omega,\theta)$ is said to be \textit{of the first kind} if the Lee morphism is surjective, and \textit{of the second kind} if it is identically zero. Note that a LCS structure of the first kind on a Lie algebra naturally induces a LCS structure of the first kind on any quotient of the associated simply connected Lie group by a discrete subgroup.

\smallskip

There is another way to distinguish LCS structures. Indeed, a LCS structure $(\omega,\theta)$ on a Lie algebra $\g$ is said to be \textit{exact} if $[\omega]_\theta =0$, and \textit{non exact} if $[\omega]_\theta\neq0$. It was shown in \cite{BM} that on any unimodular Lie algebra a LCS structure is of the first kind if and only if it is exact.

\

Recently, in \cite{BM} and \cite{ABP} many interesting results on LCS structures of the first kind on Lie algebras and solvmanifolds were established. We review some of them next.

\medskip

Firstly, in \cite{BM} it is shown a relation between Lie algebras equipped with LCS structures of the first kind and Lie algebras with contact structures. Recall that a $1$-form $\eta$ on a $(2n+1)$-dimensional Lie algebra $\h$ is called a contact form if $\eta\wedge (d\eta)^n\neq0$. This form $\eta$ is called the contact form and the unique vector $V\in\h$ satisfying $\eta(V)=1$ and $i_V d\eta=0$ is called the Reeb vector of the contact structure. 

\begin{teo}[\cite{BM}]\label{lcs_contact}
There is a one to one correspondence between $(2n+1)$-dimensional contact Lie algebras $(\h,\eta)$ endowed with a derivation $D$ such that $\eta\circ D=0$ and $(2n+2)$-dimensional LCS Lie algebras  of the first kind $(\g,\omega,\theta)$. The relation is given by $\h=\ker\theta$, $\omega=d_\theta \eta$ and $D=\ad_U$ where $U$ satisfies $\eta=-i_U\omega$.
\end{teo}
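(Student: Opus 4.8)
The plan is to establish a bijection between the two classes of objects, so I would proceed by constructing the two directions of the correspondence explicitly and then checking they are mutually inverse. First I would take a $(2n+2)$-dimensional LCS Lie algebra of the first kind $(\g,\omega,\theta)$ and produce the contact data. Since the structure is of the first kind, the Lee morphism is surjective, so there exists an infinitesimal automorphism $U\in\g$ with $\theta(U)\neq 0$; after rescaling I may assume $\theta(U)=1$. I would set $\h=\ker\theta$, which is a codimension-one subalgebra (indeed an ideal, since $d\theta=0$ forces $\g'\subseteq\ker\theta$), and $\eta=-i_U\omega$ restricted to $\h$. The derivation should be $D=\ad_U$; because $U$ is an infinitesimal automorphism, $\textrm{L}_U\omega=0$, and I would use this together with $\textrm{L}_U\theta=0$ to show $D$ preserves $\h$ and satisfies $\eta\circ D=0$.

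The key computational step in this direction is verifying that $\eta$ is a contact form on $\h$ and that the relation $\omega=d_\theta\eta$ holds. I would start from the Cartan formula $\textrm{L}_U\omega = i_U d\omega + d(i_U\omega)$. Using $\textrm{L}_U\omega=0$ and the LCS condition $d\omega=\theta\wedge\omega$, I get $d(i_U\omega) = -i_U(\theta\wedge\omega) = -\theta(U)\omega + \theta\wedge(i_U\omega) = -\omega - \theta\wedge(i_U\omega)$ on using $\theta(U)=1$. Writing $\eta=-i_U\omega$ this rearranges to $\omega = d\eta - \theta\wedge\eta = d_\theta\eta$, which is precisely the claimed relation. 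To see $\eta$ is contact I would exploit that $\omega$ is nondegenerate on $\g$ together with the splitting $\g=\R U\oplus\h$; nondegeneracy of $\omega$ translates into $d\eta$ having maximal rank $2n$ on $\h$, whence $\eta\wedge(d\eta)^n\neq 0$.

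For the reverse direction I would take a contact Lie algebra $(\h,\eta)$ of dimension $2n+1$ with Reeb vector $V$ and a derivation $D$ satisfying $\eta\circ D=0$, and build $\g = \R U\ltimes_D \h$ as the semidirect product with $\ad_U|_\h = D$. I would define $\theta\in\g^*$ by $\theta|_\h=0$ and $\theta(U)=1$, which is closed because $\g'\subseteq\h$, and set $\omega = d_\theta\eta = d\eta - \theta\wedge\eta$, interpreting $\eta$ and $d\eta$ as forms on $\g$ via the splitting. I would then verify that $\omega$ is nondegenerate (again tying the contact condition on $\eta$ to the nondegeneracy of $\omega$, with the $U$-direction paired against the Reeb direction through $\theta\wedge\eta$), that $d\omega=\theta\wedge\omega$ holds so that $(\omega,\theta)$ is genuinely LCS, and that $U$ is an infinitesimal automorphism with $\theta(U)=1$, making the structure of the first kind. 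The condition $\eta\circ D=0$ is exactly what guarantees $d_\theta\eta$ is well behaved under $d$ and that $U$ automorphizes $\omega$.

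The main obstacle I anticipate is the bookkeeping around nondegeneracy: showing precisely that $\eta\wedge(d\eta)^n\neq 0$ on the $(2n+1)$-dimensional $\h$ is equivalent to $\omega^{n+1}\neq 0$ on the $(2n+2)$-dimensional $\g$. This requires tracking how the extra form $\theta\wedge\eta$ interacts with the powers of $d\eta$ and using that $i_Vd\eta=0$ identifies the Reeb direction as the kernel of $d\eta|_\h$, which is then filled in by the $\theta$-direction. Once both assignments are shown to produce valid objects, checking that they are inverse to each other is essentially formal: starting from $(\g,\omega,\theta)$ one recovers $U$ (up to the normalization $\theta(U)=1$) and $\eta=-i_U\omega$, and the two constructions of $\omega$ via $d_\theta\eta$ agree by the computation above; conversely starting from $(\h,\eta,D)$ the reconstructed kernel of $\theta$ is $\h$ and $-i_U\omega=\eta$, so the round trip is the identity.
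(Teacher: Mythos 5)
Your proposal is correct and follows essentially the same route as the proof in the cited source \cite{BM} (the survey itself only states this theorem, without proof): Cartan's formula applied to the infinitesimal automorphism $U$ with $\theta(U)=1$ yields $\omega=d_\theta\eta$ and the contact condition on $\h=\ker\theta$, while the inverse construction is the semidirect product $\g=\R U\ltimes_D\h$ with $\omega=d\eta-\theta\wedge\eta$. Only note the sign slip in your intermediate line: $-i_U(\theta\wedge\omega)=-\theta(U)\omega+\theta\wedge(i_U\omega)=-\omega+\theta\wedge(i_U\omega)$, which after substituting $i_U\omega=-\eta$ gives exactly your (correct) final identity $\omega=d\eta-\theta\wedge\eta$.
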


When the Lee vector is in the center of the Lie algebra, a more refined result can be obtained, relating the LCS structure with a symplectic structure on a lower dimensional Lie algebra (cf. Theorem \ref{Vaisman-KF}):

\begin{teo}[\cite{BM}]\label{lcs_symplectic}
There is a one to one correspondence between Lie algebras of dimension $2n +2$ admitting a LCS structure of the first kind with central Lee vector and symplectic Lie algebras $(\mathfrak{s}, \beta)$ of dimension $2n$ endowed with a derivation $E$ such that $\beta(EX,Y)+\beta(X,EY)=0$ for all $X,Y\in\mathfrak s$.
\end{teo}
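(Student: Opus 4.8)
The plan is to establish the correspondence of Theorem \ref{lcs_symplectic} by combining the contact reduction of Theorem \ref{lcs_contact} with the observation that a contact Lie algebra with central Reeb vector is precisely a one-dimensional central extension of a symplectic Lie algebra. First I would invoke Theorem \ref{lcs_contact} to pass from a $(2n+2)$-dimensional LCS Lie algebra $(\g,\omega,\theta)$ of the first kind to the contact Lie algebra $(\h,\eta)$ with $\h=\ker\theta$, together with the derivation $D=\ad_U$ satisfying $\eta\circ D=0$, where $U$ is characterized by $\eta=-i_U\omega$. The Lee vector $V$ is defined by $i_V\omega=\theta$, and the hypothesis is that $V$ is central in $\g$.

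Next I would translate the centrality of the Lee vector into the statement that the Reeb vector $\xi$ of $(\h,\eta)$ is central in $\h$. The key point is that in an LCS structure of the first kind one can arrange $U$ and $V$ so that they span a complement to $\ker\theta$ and to $\ker\eta$ respectively; when $V$ is central, a short computation using $\omega=d_\theta\eta=d\eta-\theta\wedge\eta$ shows that $d\eta$ has a one-dimensional kernel spanned by the class of $V$ in $\h$, and that this kernel is in fact central. Writing $\mathfrak{s}:=\ker\eta$ with the Reeb direction quotiented out, one sees that $\h$ is the central extension $\h=\R\xi\oplus\mathfrak{s}$ determined by the cocycle $\beta:=d\eta|_{\mathfrak{s}\times\mathfrak{s}}$, which is nondegenerate on $\mathfrak{s}$ precisely because $\eta\wedge(d\eta)^n\neq0$. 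Thus $(\mathfrak{s},\beta)$ is a symplectic Lie algebra of dimension $2n$.

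It then remains to descend the derivation $D$ of $\h$ to a derivation $E$ of $\mathfrak{s}$ compatible with $\beta$. Since $D=\ad_U$ satisfies $\eta\circ D=0$ and preserves the central Reeb line, $D$ induces a well-defined endomorphism $E$ on the quotient $\mathfrak{s}=\ker\eta/\R\xi$; because $D$ is a derivation of $\h$ and $\beta$ is built from $d\eta$, the derivation property passes to $E$, and the condition $\eta\circ D=0$ forces $\beta(EX,Y)+\beta(X,EY)=0$ for all $X,Y\in\mathfrak{s}$ (this last identity is exactly the statement that $E$ infinitesimally preserves the symplectic form up to the correction coming from $\theta(U)$, which vanishes under the normalization). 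Conversely, given $(\mathfrak{s},\beta)$ and such a derivation $E$, I would reconstruct $\h$ as the central extension by $\beta$, set $\eta$ to be the dual of the central generator, extend $E$ to a derivation $D$ of $\h$ annihilating $\eta$, and finally apply Theorem \ref{lcs_contact} to produce the LCS Lie algebra $(\g,\omega,\theta)$ of the first kind, checking that the resulting Lee vector is central.

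The main obstacle I anticipate is the careful bookkeeping in the two identities $\eta\circ D=0$ and $\beta(EX,Y)+\beta(X,EY)=0$: one must verify that the derivation condition on $\h$ restricts cleanly to $\mathfrak{s}$ without picking up extra terms from the central direction, and that the centrality of the Lee vector $V$ really is equivalent to the centrality of the Reeb vector $\xi$ rather than merely implying it. Establishing this equivalence in both directions, so that the correspondence is genuinely bijective, is the delicate part; the symplectic and extension constructions themselves are routine once the central vectors are correctly matched up.
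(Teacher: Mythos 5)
The survey quotes this theorem from \cite{BM} without giving a proof, so there is no in-paper argument to compare against; your route --- feeding Theorem \ref{lcs_contact} into the observation that a contact Lie algebra with central Reeb vector is precisely a one-dimensional central extension of a symplectic Lie algebra by the cocycle coming from $d\eta$ --- is exactly the intended refinement and is sound (it is also how \cite{BM} argue). The two points you flag as delicate resolve cleanly: the Lee vector $V$ \emph{is} the Reeb vector of $(\h,\eta)=(\ker\theta,\eta)$, and $[U,V]=0$ holds automatically, since $L_U\omega=0$, $i_V\omega=\theta$ and $d\theta=0$ give $\omega([U,V],\cdot)=-\omega(V,[U,\cdot])=-\theta([U,\cdot])=0$ with $\omega$ nondegenerate, so centrality of $V$ in $\g$ is equivalent to centrality of the Reeb vector in $\h$; and the identity $\beta(EX,Y)+\beta(X,EY)=0$ falls out of applying $\eta\circ D=0$ to $D[X,Y]=[DX,Y]+[X,DY]$, since $\beta=d\eta|_{\ker\eta}=-\eta([\cdot,\cdot])$. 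One small correction: the kernel of $d\eta$ on $\h$ is $\R\xi$ by the contact condition alone (that is the definition of the Reeb vector), not as a consequence of centrality --- centrality is what you need for $\h/\R\xi$ to inherit a Lie bracket and for $\beta$ to descend to a well-defined symplectic cocycle there.
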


\medskip

Considering LCS structures on nilpotent Lie algebras, it was shown in \cite{BM} that any such structure is of the first kind and, moreover, the Lee vector is central. Combining this with the previous theorem the next result follows:

\begin{teo}[\cite{BM}]
Every LCS nilpotent Lie algebra of dimension $(2n+2)$ with non-zero Lee form is a suitable extension of a $2n$-dimensional symplectic nilpotent Lie algebra $\mathfrak{s}$ equipped with a symplectic nilpotent derivation. 
\end{teo}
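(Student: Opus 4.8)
The plan is to reduce the statement to the correspondence established in Theorem \ref{lcs_symplectic}, and then to track how nilpotency is inherited by the resulting symplectic data. Let $(\g,\omega,\theta)$ be an LCS nilpotent Lie algebra of dimension $2n+2$ with $\theta\neq 0$. The starting point is the cited fact from \cite{BM} that any LCS structure on a nilpotent Lie algebra is automatically of the first kind and that its Lee vector $V$ (the unique vector with $i_V\omega=\theta$) lies in the center $\z(\g)$. This places us exactly in the hypotheses of Theorem \ref{lcs_symplectic}, so that $\g$ is the prescribed extension of a $2n$-dimensional symplectic Lie algebra $(\mathfrak{s},\beta)$ endowed with a derivation $E$ satisfying $\beta(EX,Y)+\beta(X,EY)=0$. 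It therefore only remains to prove two things: that $\mathfrak{s}$ is nilpotent and that $E$ is a nilpotent derivation.

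To do this I would first make explicit the subquotient realizing $\mathfrak{s}$. Since $\theta$ is closed, $\ker\theta$ is an ideal of $\g$, and because $V$ is central and satisfies $\theta(V)=\omega(V,V)=0$, the line $\R V$ is a central ideal contained in $\ker\theta$. As $\dim\ker\theta=2n+1$ is odd and $i_V\omega=\theta$ vanishes on $\ker\theta$, the restriction $\omega|_{\ker\theta}$ has one-dimensional radical equal to $\R V$, so it descends to the non-degenerate form $\beta$ on the quotient $\mathfrak{s}=\ker\theta/\R V$. This is precisely the symplectic Lie algebra of Theorem \ref{lcs_symplectic}, and the derivation $E$ is the endomorphism induced on $\mathfrak{s}$ by $\ad_U|_{\ker\theta}$, where $U$ is the infinitesimal automorphism with $\theta(U)=1$ (the inclusion $\ad_U(\ker\theta)\subseteq\ker\theta$ and $\ad_U(\R V)=0$ guaranteeing that $\ad_U$ indeed descends).

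With this description the two nilpotency claims are immediate. The Lie algebra $\mathfrak{s}$ is a quotient of the subalgebra $\ker\theta$ of the nilpotent Lie algebra $\g$, and subalgebras and quotients of nilpotent Lie algebras are again nilpotent; hence $\mathfrak{s}$ is nilpotent. For the derivation, Engel's theorem gives that every adjoint operator $\ad_U$ on a nilpotent Lie algebra is a nilpotent endomorphism, and the map induced by a nilpotent endomorphism on any invariant subquotient is nilpotent; thus $E$ is nilpotent. Combining this with the skew-symmetry $\beta(EX,Y)+\beta(X,EY)=0$ already provided by Theorem \ref{lcs_symplectic}, we conclude that $E$ is a symplectic nilpotent derivation, which is exactly what the statement asserts.

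I do not expect a serious obstacle here, since the substantive content — the correspondence itself, the first-kind property, and the centrality of the Lee vector — is all imported from \cite{BM}. The only point demanding care is the bookkeeping in the second paragraph: one must confirm that the abstract symplectic Lie algebra and derivation produced by Theorem \ref{lcs_symplectic} really are the concrete subquotient $\ker\theta/\R V$ and the induced map $\overline{\ad_U}$, because it is through this identification that the nilpotency of $\g$ is transmitted to both $\mathfrak{s}$ and $E$.
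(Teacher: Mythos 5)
Your proposal is correct and follows exactly the route the paper indicates: it invokes the facts from \cite{BM} that every LCS structure on a nilpotent Lie algebra is of the first kind with central Lee vector, and then applies the correspondence of Theorem \ref{lcs_symplectic}. The extra bookkeeping you supply (identifying $\mathfrak{s}$ with $\ker\theta/\R V$ and $E$ with the map induced by $\ad_U$, so that nilpotency of $\g$ passes to both) is sound and merely makes explicit what the paper leaves implicit.
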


Moreover, the symplectic nilpotent Lie algebra $\mathfrak s$ may obtained by a sequence of $(n-1)$ symplectic double extensions by nilpotent derivations from the abelian Lie algebra of dimension $2$, according to \cite{MR,DM}.

\

On the other hand, LCS structures of the second kind are less understood. Examples of LCS structures of the second kind on the $4$-dimensional
solvmanifold constructed in \cite{ACFM} are given in \cite{Ba2}. More precisely, these LCS structures are non exact. In \cite{ABP} properties of non exact LCS structures were investigated, producing new examples. LCS structures on almost abelian Lie algebras were studied in \cite{AO1} and, among other results, it was proved that on any such Lie algebra of dimension at least $6$ any LCS structure is of the second kind. Moreover, for each $n\geq 2$ a $2n$-dimensional simply connected almost abelian Lie group with left invariant LCS structures of the second kind are constructed; these groups admit lattices and we obtain in this way solvmanifolds with LCS structures, moreover, it can be seen that these structures are of the second kind.

\smallskip

In \cite{ABP} a classification of LCS structures on $4$-dimensional Lie algebras, up to Lie algebra automorphism, is given and LCS structures on compact quotients of all $4$-dimensional simply connected solvable Lie groups are constructed. They also give structure results for LCS Lie algebras inspired by the results of \cite{Ov} and \cite{BM}. The first result is about exact lcs structures, not necessarily of the first kind.

\begin{teo}\cite[Theorem 1.4]{ABP}
There is a one-to-one correspondence between exact LCS Lie algebras $(\g,\omega=d\eta-\theta\wedge\eta,\theta)$, $\dim \g=2n$, such that $\theta(U)\neq0$, where $\eta=-i_U\omega$ and contact Lie algebras $(\h,\eta)$, $\dim \h=2n-1$, with a derivation $D$ such that $D^*\eta=\alpha\eta$, $\alpha\neq1$.
\end{teo}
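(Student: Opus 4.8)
The plan is to construct the correspondence explicitly in both directions and then check that the two constructions are mutually inverse, paralleling the mechanism of Theorem \ref{lcs_contact} (where the derivation condition $\eta\circ D=0$ is the special case $\alpha=0$ of the present condition $D^*\eta=\alpha\eta$). The key numerical dictionary relating the two sides will be $\alpha=1-\theta(U)$, so that the excluded value $\alpha=1$ corresponds exactly to the excluded degenerate case $\theta(U)=0$, in which $U$ would fail to be transverse to $\ker\theta$.

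\emph{Forward direction.} Starting from an exact LCS Lie algebra $(\g,\omega=d\eta-\theta\wedge\eta,\theta)$ with $\theta(U)\neq0$, I would first note that $\h:=\ker\theta$ is a codimension-one ideal, since $d\theta=0$ forces $\g'\subseteq\ker\theta$. Because $\theta(U)\neq0$ the vector $U$ is transverse to $\h$, so $\g=\R U\oplus\h$, and $D:=\ad_U|_\h$ is a derivation of $\h$ (it preserves $\h$ as $[U,\h]\subseteq\g'\subseteq\h$, and the Jacobi identity is precisely the derivation property). Restricting $\omega$ to $\h$ and using $\theta|_\h=0$ gives $\omega|_\h=d(\eta|_\h)$, while $\eta(U)=-\omega(U,U)=0$. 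Inserting $\omega=d\eta-\theta\wedge\eta$ into $i_U\omega=-\eta$ and using $i_U\,d\eta=-\ad_U^*\eta$ together with $\eta(U)=0$ yields $\ad_U^*\eta=(1-\theta(U))\eta$, so restricting to $\h$ gives $D^*(\eta|_\h)=\alpha(\eta|_\h)$ with $\alpha=1-\theta(U)\neq1$. It remains to check that $\eta|_\h$ is a contact form: from $i_U\omega^n=n(i_U\omega)\wedge\omega^{n-1}=-n\,\eta\wedge\omega^{n-1}$, the nondegeneracy $\omega^n\neq0$ and $U\neq0$ give $\eta\wedge\omega^{n-1}\neq0$, and evaluating this $(2n-1)$-form on a basis of $\h$ — where $\omega$ restricts to $d(\eta|_\h)$ — produces $\eta|_\h\wedge(d(\eta|_\h))^{n-1}\neq0$.

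\emph{Reverse direction.} Given a contact Lie algebra $(\h,\eta)$ with a derivation $D$ satisfying $D^*\eta=\alpha\eta$, $\alpha\neq1$, I would set $\g=\R U\ltimes_D\h$ (so $[U,X]=DX$ for $X\in\h$, a Lie algebra because $D$ is a derivation), define $\theta$ by $\theta|_\h=0$ and $\theta(U)=1-\alpha\neq0$, and extend $\eta$ to $\g$ by $\eta(U)=0$. Setting $\omega:=d\eta-\theta\wedge\eta$, the equation $d\omega=\theta\wedge\omega$ holds automatically from $d_\theta^2=0$, the structure is exact by construction, and $\theta$ is closed again because $\g'\subseteq\h=\ker\theta$. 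A direct contraction gives $i_U\omega=-(\alpha+\theta(U))\eta=-\eta$, which both recovers $\eta=-i_U\omega$ and explains the normalization $\theta(U)=1-\alpha$. Nondegeneracy of $\omega$ follows by running the previous computation backwards: $i_U\omega^n=-n\,\eta\wedge\omega^{n-1}$, and restricting $\eta\wedge\omega^{n-1}$ to $\h$ gives $\eta\wedge(d\eta)^{n-1}\neq0$ by the contact hypothesis, whence $\omega^n\neq0$.

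Finally, I would verify that the two assignments are mutually inverse: the triple $(\ker\theta,\eta|_\h,\ad_U|_\h)$ read off from the $\g$ built in the reverse direction returns the original $(\h,\eta,D)$, while rebuilding $\g$ from $(\ker\theta,\eta|_\h,\ad_U|_\h)$ recovers the original bracket, $\theta$, $\eta$ and $\omega=d\eta-\theta\wedge\eta$ (here the exact LCS datum includes the chosen primitive $\eta$, which is what is being matched). The main obstacle — and the only genuinely non-formal step — is the equivalence between nondegeneracy of $\omega$ on $\g$ and the contact condition on $\h$; the clean way through it is the identity $i_U\omega^n=-n\,\eta\wedge\omega^{n-1}$, which reduces the top power $\omega^n$ on $\g$ to the contact volume $\eta\wedge(d\eta)^{n-1}$ on $\h$, so that the two conditions become literally the same statement.
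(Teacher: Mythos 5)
The survey quotes this theorem from \cite{ABP} without reproducing its proof, so the only meaningful comparison is with the original argument there, and your reconstruction follows the same route: $\h=\ker\theta$, $D=\ad_U|_{\h}$, the dictionary $\alpha=1-\theta(U)$ (consistent with the first-kind case $\alpha=0$, $\theta(U)=1$ of Theorem \ref{lcs_contact}), and the reduction of nondegeneracy of $\omega$ to the contact condition via $i_U\omega^n=-n\,\eta\wedge\omega^{n-1}$. All the displayed computations check out. The one step you should not gloss over is the passage, in the forward direction, from $\eta\wedge\omega^{n-1}\neq 0$ on $\g$ to the nonvanishing of its restriction to the hyperplane $\h$: a nonzero $(2n-1)$-form on a $2n$-dimensional space can restrict to zero on a hyperplane, so ``evaluating on a basis of $\h$'' is not automatically nonzero. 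The missing line is $i_U(\eta\wedge\omega^{n-1})=\eta(U)\,\omega^{n-1}+(n-1)\,\eta\wedge\eta\wedge\omega^{n-2}=0$ (using $\eta(U)=0$ and $i_U\omega=-\eta$); a $(2n-1)$-form annihilated by $i_U$ is determined by its restriction to $\h$, so nonvanishing on $\g$ forces nonvanishing on $\h$. With that observation added, the argument is complete and mutually inverse as you describe.
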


\medskip

The second structure result is about Lie algebras with a non exact LCS structure, establishing a relation with \textit{cosymplectic} Lie algebras. We recall that a $(2n-1)$-dimensional Lie algebra $\h$ is said to be cosymplectic if there exist two closed $1$-forms $\eta,\beta\in\h^*$ such that $\eta\wedge \beta^{n-1}\neq 0$. 

\begin{teo}\cite[Proposition 1.8]{ABP}
Let $(\h,\eta,\beta)$ be a cosymplectic Lie algebra of dimension $2n-1$, endowed with a derivation $D$ such that $D^*\beta=\alpha\beta$ for some $\alpha\neq0$. Then $\g=\h\ltimes_D \R$ admits a natural LCS structure. The Lie algebra $\g$ is unimodular if and only if $\h$ is unimodular and $D^*\eta=-\alpha(n-1)\eta+\delta$ for some $\delta\in \h^*$ with $\delta(R)=0$ where $R$ denotes the Reeb vector of the cosymplectic structure. If $\h$ is unimodular then the LCS structure on $\g$ is not exact.
\end{teo}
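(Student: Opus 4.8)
The plan is to exhibit the LCS structure explicitly and then read off both unimodularity and non-exactness from the Chevalley--Eilenberg differential of $\g$. Write $\g=\h\ltimes_D\R$ with extra generator $T$, so that $[T,X]=DX$ for $X\in\h$, and let $\tau\in\g^*$ be its dual ($\tau(T)=1$, $\tau|_\h=0$). Taking $D^*$ to be the derivation of $\Lambda^\bullet\h^*$ determined by $(D^*\xi)(X)=\xi(DX)$, the formula $d\phi(x,y)=-\phi([x,y])$ gives at once $d\tau=0$ and, for any $\phi\in\Lambda^\bullet\h^*$ extended to $\g^*$ by $i_T\phi=0$, the relation $d_\g\phi=d_\h\phi-\tau\wedge D^*\phi$. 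Since $\eta$ and $\beta$ are $d_\h$-closed and $D^*\beta=\alpha\beta$, this yields $d_\g\eta=-\tau\wedge D^*\eta$ and $d_\g\beta=-\alpha\,\tau\wedge\beta$. First I would set $\omega=\tau\wedge\eta+\beta$ and $\theta=-\alpha\tau$. Nondegeneracy follows from $\omega^n=n\,\tau\wedge\eta\wedge\beta^{n-1}\neq0$, the other binomial terms vanishing because $(\tau\wedge\eta)^2=0$ and $\beta^n=0$ on the $(2n-1)$-dimensional $\h$; moreover $d_\g\omega=d_\g(\tau\wedge\eta)+d_\g\beta=-\alpha\,\tau\wedge\beta=\theta\wedge\omega$ with $d\theta=0$, which is the asserted natural LCS structure.

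For the unimodularity criterion I would compute traces in the splitting $\g=\R T\oplus\h$: one finds $\tr(\ad_X)=\tr(\ad^\h_X)$ for $X\in\h$ and $\tr(\ad_T)=\tr D$, so $\g$ is unimodular precisely when $\h$ is unimodular and $\tr D=0$. To rewrite $\tr D=0$ as the stated condition I would let $D^*$ act on the volume form $\eta\wedge\beta^{n-1}$ of $\h$: as a derivation on a top-degree form it multiplies by $\tr D$, while Leibniz together with $D^*(\beta^{n-1})=(n-1)\alpha\,\beta^{n-1}$ gives $D^*(\eta\wedge\beta^{n-1})=(D^*\eta)\wedge\beta^{n-1}+(n-1)\alpha\,\eta\wedge\beta^{n-1}$. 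Combining these with the identity $\xi\wedge\beta^{n-1}=\xi(R)\,\eta\wedge\beta^{n-1}$ — valid for every $\xi\in\h^*$, as one checks in a Darboux basis of $\h$ adapted to the radical $\R R$ of $\beta$ — reduces everything to $(D^*\eta)(R)=\tr D-(n-1)\alpha$. Since one may always decompose $D^*\eta=(D^*\eta)(R)\,\eta+\delta$ with $\delta(R)=0$, the equality $\tr D=0$ is equivalent to $(D^*\eta)(R)=-\alpha(n-1)$, that is, to $D^*\eta=-\alpha(n-1)\eta+\delta$ with $\delta(R)=0$; this gives the unimodularity statement.

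For non-exactness, assume $\h$ unimodular and suppose, for contradiction, that $\omega=d_\theta\nu$ for some $\nu\in\g^*$. Writing $\nu=a\tau+\nu_\h$ and using $d_\theta\nu=d_\h\nu_\h-\tau\wedge(D^*\nu_\h-\alpha\nu_\h)$, I would compare the $\Lambda^2\h^*$-components of $\omega=\tau\wedge\eta+\beta$ and $d_\theta\nu$ to obtain $d_\h\nu_\h=\beta$; thus $\beta$ would be $d_\h$-exact. But then $\beta^{n-1}=d_\h(\nu_\h\wedge\beta^{n-2})$, whence $\eta\wedge\beta^{n-1}=-d_\h(\eta\wedge\nu_\h\wedge\beta^{n-2})$ would be an exact top form on $\h$. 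This contradicts the fact that on a unimodular Lie algebra no nonzero top-degree form is exact (equivalently $H^{2n-1}(\h)\cong\R$), because the cosymplectic condition forces $\eta\wedge\beta^{n-1}\neq0$. Hence no such $\nu$ exists and the structure is not exact.

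The differential bookkeeping of the first paragraph is routine; the two steps demanding care are the volume-form computation $(D^*\eta)(R)=\tr D-(n-1)\alpha$, where one must fix the derivation convention for $D^*$ consistently and verify $\xi\wedge\beta^{n-1}=\xi(R)\,\eta\wedge\beta^{n-1}$, and the non-exactness argument, whose crux is that $d_\theta$-exactness of $\omega$ forces $\beta$ itself to be $d_\h$-exact, which is incompatible with the nonvanishing of $[\eta\wedge\beta^{n-1}]\in H^{2n-1}(\h)$ guaranteed by unimodularity of $\h$.
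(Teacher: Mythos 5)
The survey states this result only as a citation of \cite[Proposition 1.8]{ABP} and supplies no proof, so there is nothing in the paper itself to compare your argument against; judged on its own, your proposal is correct and complete. The key checkpoints all hold: the formula $d_\g\phi=d_\h\phi-\tau\wedge D^*\phi$ for forms pulled back from $\h$; the computation $d_\g(\tau\wedge\eta)=\tau\wedge\tau\wedge D^*\eta=0$, which yields $d_\g\omega=-\alpha\,\tau\wedge\beta=\theta\wedge\omega$ with $\theta=-\alpha\tau$ closed; the block-triangular shape of $\ad_X$ in the splitting $\R T\oplus\h$, giving $\operatorname{tr}\ad_X=\operatorname{tr}\ad_X^{\h}$ and $\operatorname{tr}\ad_T=\operatorname{tr}D$; the identity $\xi\wedge\beta^{n-1}=\xi(R)\,\eta\wedge\beta^{n-1}$, which is valid because $\eta\wedge\beta^{n-1}\neq0$ forces $\beta$ to have rank $2n-2$ with kernel exactly $\R R$; and the concluding contradiction with the standard fact that on a unimodular Lie algebra the top exterior power of the dual is not exact. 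One remark: the survey's definition of a cosymplectic Lie algebra literally calls $\beta$ a closed $1$-form, which must be a typo (otherwise $\beta^{n-1}=0$ for $n\geq3$); you implicitly and correctly use the standard definition with $\beta$ a closed $2$-form, and your nondegeneracy computation $\omega^n=n\,\tau\wedge\eta\wedge\beta^{n-1}$ as well as the non-exactness argument (where $d_\h\nu_\h=\beta$ forces the volume form $\eta\wedge\beta^{n-1}$ to be exact) depend on that reading. It would be worth stating that convention explicitly at the outset, but mathematically the proof stands as written.
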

 
\medskip

The third result is related with the cotangent extension problem in the LCS setting; the symplectic case was studied in \cite{Ov}.

\begin{teo}\cite[Proposition 1.17]{ABP}
Let $(\g,\omega,\theta)$ be a $2n$-dimensional LCS Lie algebra with a Lagrangian ideal $j\subset \ker\theta$. Then $\g$ is a solution of the cotangent extension problem, that is, there exists a $n$-dimensional Lie algebra $\h$ with a closed 1-form $\mu\in\h^*$ such that $\g\cong \h^*\oplus\h$, and $(\omega,\theta)$ is equivalent to a canonical LCS structure on $\h^*\oplus\h$ constructed using $\mu$.
\end{teo}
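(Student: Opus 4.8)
The plan is to recover the cotangent-extension data $(\h,\mu)$ directly from the pair $(\omega,\theta)$ and the Lagrangian ideal $j$. First I would set $\h:=\g/j$, so that $\dim\h=n$, and observe that since $d\theta=0$ (equivalently $\theta(\g')=0$) and $j\subset\ker\theta$ is an ideal, the Lee form descends to a well-defined $\mu\in\h^*$ via $\mu(\bar x)=\theta(x)$; the same computation $d\mu(\bar x,\bar y)=-\theta([x,y])=0$ shows $\mu$ is closed. This is the closed $1$-form appearing in the statement. Next, since $\omega$ is nondegenerate and $j$ is Lagrangian of dimension $n$, the assignment $u\mapsto\big(\bar x\mapsto\omega(u,x)\big)$ is a well-defined linear isomorphism $j\xrightarrow{\sim}\h^*$ (well-defined because $j$ is isotropic, injective by nondegeneracy, and an isomorphism by dimension count); fixing a vector-space complement then gives the underlying identification $\g\cong\h^*\oplus\h$.

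The second step is to show that the ideal $j$ is abelian, which is what makes $\g$ a genuine cotangent-type extension. For $u,v\in j$ and arbitrary $w\in\g$, evaluate \eqref{lcs} on the triple $(u,v,w)$: since $u,v\in\ker\theta$ and $\omega(u,v)=0$, the right-hand side $(\theta\wedge\omega)(u,v,w)$ vanishes, while on the left $\omega([v,w],u)=\omega([w,u],v)=0$ because $[v,w],[w,u]\in j$ and $j$ is Lagrangian. Hence $\omega([u,v],w)=0$ for all $w$, and nondegeneracy forces $[u,v]=0$. Consequently $\g$ is an extension of $\h$ by the abelian ideal $j\cong\h^*$, and because $j$ is abelian the adjoint action on $j$ factors through $\h$, producing a representation $\rho\colon\h\to\operatorname{End}(\h^*)$ together with an extension $2$-cocycle.

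The third step is to identify $\rho$ and the cocycle with the canonical ones twisted by $\mu$. Here I would choose the complement $W$ to be Lagrangian --- possible for any Lagrangian subspace of a symplectic vector space --- so that $\omega$ becomes exactly the canonical pairing between $j\cong\h^*$ and $W\cong\h$, both summands being $\omega$-isotropic. Feeding the mixed triples $(s(x),u,s(y))$ and $(s(x),u,v)$, with $u,v\in j$ and $s$ the section onto $W$, into \eqref{lcs} and transporting everything through the isomorphism $j\cong\h^*$, one reads off that $\rho$ is the coadjoint action $\ad^*$ of $\h$ on $\h^*$ corrected by a scalar term built from $\mu$, and that the $\h^*$-valued cocycle obeys the corresponding $\mu$-twisted cocycle identity. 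This is precisely the data defining the canonical LCS structure on $\h^*\oplus\h$ attached to $(\h,\mu)$, so the identification built above is an isomorphism of Lie algebras intertwining $\omega$ and $\theta$ with the canonical pair, which is the asserted equivalence.

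The main obstacle lies in the third step: one must choose $W$ so that $\omega$ is simultaneously the canonical pairing and the bracket takes the prescribed twisted-coadjoint shape, and then verify, component by component in $\alt^3\g^*$, that the twist produced by \eqref{lcs} is exactly multiplication by $\mu$ and not some other $1$-form. The bookkeeping over the various mixed components of $d\omega=\theta\wedge\omega$ --- and, if needed, a further adjustment of $W$ to normalize the cocycle --- is the delicate part; by contrast the descent of $\theta$ to $\mu$, the isomorphism $j\cong\h^*$, and the vanishing of the bracket on $j$ are all routine.
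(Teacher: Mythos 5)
This theorem is only quoted in the survey from \cite[Proposition 1.17]{ABP}; no proof is reproduced here, so your proposal can only be measured against the standard argument. Your reconstruction is correct and is essentially that argument: $\mu$ is the descent of $\theta$ to $\h=\g/j$, the map $u\mapsto\omega(u,\cdot)$ identifies $j$ with $\h^*$, the evaluation of \eqref{lcs} on triples in $j\times j\times\g$ forces $j$ to be abelian, and a Lagrangian complement turns $\omega$ into the canonical pairing and $\theta$ into $\pi^*\mu$. Your third step is actually lighter than you fear: since $\theta$ vanishes on $j$ and restricts to $\mu$ on the complement, the Lee form is $\pi^*\mu$ by construction, and the representation and cocycle need not be pinned down to a twisted coadjoint normal form --- the statement only requires that $\g$ be \emph{a} solution of the cotangent extension problem, i.e.\ an extension of $\h$ by the abelian Lagrangian ideal $\h^*$ carrying the canonical pair $(\Omega,\pi^*\mu)$, with the compatibility conditions on $(\rho,\sigma)$ following automatically from the Jacobi identity and \eqref{lcs} on $\g$.
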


Furthermore, in \cite{ABP} it is shown that every LCS structure on a $4$-dimensional Lie algebra  can be recovered by one of the three constructions detailed above.

\

\end{document}